\newcommand{\veps}{\varepsilon}
\numberwithin{equation}{section}
\theoremstyle{plain}
\newtheorem{theorem}{Theorem}[section]
\newtheorem{lemma}[theorem]{Lemma}
\newtheorem{proposition}[theorem]{Proposition}
\theoremstyle{definition}
\newtheorem*{defi*}{Definition} 
\theoremstyle{remark}
\newtheorem{remark}{Remark}
\theoremstyle{remark}
\title[Peaked singularity in EP system]{Emergence of peaked singularities in the Euler-Poisson system}
\author[J. Bae]{Junsik Bae}
\address[JB]{Department of Mathematical Sciences, Korea Advanced Institute of Science and Technology, Daejeon, 34141, Republic of Korea}
\email{junsikbae@kaist.ac.kr}
\author[S.H Moon]{Sang-Hyuck Moon}
\address[SM]{Department of Mathematical Sciences, Ulsan National Institute of Science and Technology, Ulsan, 44919, Republic of Korea}
\email{shmoon0208@unist.ac.kr}
\author[K. Woo]{Kwan Woo}
\address[KW]{The Research Institute of Basic Sciences, Seoul National University, Seoul, 08826, Republic of Korea}
\email{kwanwoo@snu.ac.kr}
\date{\today}
\subjclass{Primary: 35Q35,  37K40   Secondary: 	35A21, 76N20}
\keywords{Euler-Poisson system; peaked solitary wave; cold ion limit.}
\begin{document}


\begin{abstract} 
We consider the one-dimensional Euler-Poisson system equipped with the Boltzmann relation and  provide the exact asymptotic behavior of the peaked solitary wave solutions near the peak. This enables us to study the cold ion limit of the peaked solitary waves with the sharp range of H\"older exponents.
Furthermore, we provide numerical evidence for $C^1$  blow-up solutions to the pressureless Euler-Poisson system, whose blow-up profiles are asymptotically similar to its peaked solitary waves and exhibit a different form of blow-up compared to the Burgers-type (shock-like) blow-up.
\end{abstract}
 
\maketitle

\section{Introduction}
The dynamics of an electrostatic plasma consisting of ions and electrons are described by the two-species Euler-Poisson system. Based on the physical fact that the mass of electrons is very small compared to that of ions, it is often assumed that the density of electrons follows the so-called Boltzmann relation, $\rho_e=e^\phi$, where $\phi$ is the electric potential. The resulting one-species Euler-Poisson system has not only been proven useful in describing the dynamics of ion waves \cite{Ch,Dav} but also contains rich and interesting mathematical phenomena.

We consider the one-dimensional Euler-Poisson system equipped with the Boltzmann relation
\begin{equation}\label{EP2}
\left\{
\begin{array}{l l}
\partial_{t} \rho + \partial_{x}(\rho v) = 0, \\ 
\rho(\partial_{t} v  + v\partial_{x} v) + \kappa \partial_{x} \rho = -\rho\partial_{x}\phi, \qquad    (t \geq 0, \; x\in\mathbb{R}), \\
-\partial_{x}^2\phi = \rho - e^\phi,
\end{array} 
\right.
\end{equation}
Here, $\rho(t,x)>0$ and $v(t,x)$ are unknown scalar functions for the density and  velocity of ions. The electric potential function $\phi(t,x)$ is determined as the solution to the Poisson equation for given $\rho$. Together with the other two equations, the electric potential is self-consistent. The constant $\kappa \geq 0$ in \eqref{EP2} represents the ratio of the ion temperature to the electron temperature. The system  \eqref{EP2} is referred to as the \textit{pressureless} model when $\kappa=0$ (\textit{cold ion}), and the \textit{isothermal} model when $\kappa >0$, respectively. We refer to \cite{GGPS} for the rigorous derivation of the model \eqref{EP2} from the two-species Euler-Poisson system.   

Among the various types of Euler-Poisson systems, the model \eqref{EP2} is singular in that it describes the emergence of traveling solitary waves \cite{ikezi,Sag,Wa}. It is well-known  \cite{Cor,Satt,Sag} that smooth traveling solitary waves with the speed $c$ exist in the supersonic regime, $\sqrt{1+\kappa} < c < c_\kappa$. At the upper critical speed $c_\kappa$, it admits a \textit{peaked} solitary wave, that is, traveling solitary wave which is not differentiable at the peak. Such peaked solitary waves also arise in the context of water wave models such as the Whitham equation \cite{EMV} and the Camassa-Holm equation \cite{CH1}.

Heuristically, \eqref{EP2} admits traveling solitary waves due to a balance between the nonlinear transport and (weakly) dispersive effect. As a matter of fact, the dispersion relation $\omega$ of \eqref{EP2} around $(\rho,v,\phi)\equiv(1,0,0)$ is given by
\[
\omega(k) = \pm k\sqrt{\kappa + \frac{1}{1+k^2} } = \pm \left( \sqrt{1+\kappa} k - \frac{k^3}{2\sqrt{1+\kappa}} \right) + O(k^5), \quad (k \in \mathbb{R}),
\]
as the frequency $k \to 0$ (i.e, in the long wave regime). Notice that in the moving frame with the speed $\sqrt{1+\kappa}$, the dispersion relation is that of the KdV equation for small $k$.

There are several works concerning the behavior of solutions to \eqref{EP2} near the \textit{ion-sonic} speed $\sqrt{1+\kappa}$.  As $c\searrow \sqrt{1+\kappa}$, the amplitude of solitary wave solutions to \eqref{EP2} converges to zero. In particular, in the long wave scaling,  the Euler-Poisson solitary waves are approximated by the associated KdV solitary waves \cite{BK}:
\begin{equation}\label{KdVlimit}
\veps\cdot \frac{3}{\sqrt{1+\kappa}} \text{sech}^2\left( \sqrt{\sqrt{1+\kappa}/2} \left( x- c t \right)\right), \quad \kappa \geq 0,
\end{equation} 
for sufficiently small $c-\sqrt{1+\kappa}=:\veps>0$. The asymptotic linear stability of the small amplitude solitary waves has been studied in  \cite{BK2} and \cite{Sche}. In terms of the initial value problem, the work of \cite{Guo} rigorously justify the KdV limit of \eqref{EP2}.
To the best of the authors' knowledge, the global existence of smooth solutions to \eqref{EP2} has not yet been studied, with the difficulty arising from the weak dispersive effect in 1D.
We refer to \cite{GP} for the global existence of smooth solutions to the 3D Euler-Poisson system for ion dynamics.

 In this paper, we are interested in the behaviors of the peaked solitary waves, i.e, the solitary wave solutions to \eqref{EP2} with the speed $c=c_\kappa$ for $\kappa \geq 0$. The primary aim of this study is twofold. First, we investigate the exact behavior of the peaked solitary wave solution to \eqref{EP2} near the singularity for each $\kappa \geq 0$. Second, we study the \textit{cold ion limit} ($\kappa \to 0$) of the peaked solitary waves.  

Numerical demonstrations (see Figure 6, \cite{BK2}) indicate that as $c \nearrow c_\kappa$, the smooth solitary waves of \eqref{EP2} converge to some profiles with singularities. The shape of the limiting profile for the pressureless model is completely different from that for the isothermal model. For instance, $\|\rho\|_{L^\infty} \nearrow \infty$ as $c \nearrow c_0$ for  $\kappa=0$, whereas $\|\rho\|_{L^\infty} <\infty$ as $c \nearrow c_\kappa$ for $\kappa>0$. In particular, for $\kappa =0$, $v$ converges to a profile with a cusp-type singularity as $c \nearrow c_0$.

On the other hand, \eqref{EP2} admits \textit{stable} blow-up solutions that are in $C^{1/3}$ at the moment of blow-up  ($C^{\alpha}$-norm blows up for $\alpha>1/3$), see \cite{BKK,BKK2}.
These blow-up solutions are constructed by considering \eqref{EP2} as a perturbation of the Burgers equation,  whose stable self-similar blow-up profile has $C^{1/3}$ regularity \cite{EF}.
This also motivates us to take a closer look at the peaked solitary waves. In fact, our finding in the present paper (Theorem \ref{Thm2}) shows that the peaked solitary waves are not in $C^{1/(2i+1)}$, $i \in \mathbb{N}_+$, which are associated with the Burgers equation \cite{EF}.
A natural question is whether there exists an \textit{open set} of initial data whose blow-up profiles are asymptotically the same as the peaked solitary waves.
In Section \ref{S4}, we discuss on the issue in detail studying numerical solutions of the Euler-Poisson system. We remark that a similar question is also raised in the context of dispersive perturbations of the Burgers equation, see \cite{EMV,KS,OP} and the references therein.

In plasma physics, the pressureless model \eqref{EP2} with $\kappa=0$ is often employed to simplify analysis. Indeed, in terms of the KdV limit of solitary waves, the cold ion limit is regular in the sense of  \eqref{KdVlimit}.
However, the behavior of solutions to the pressureless model generally differs from that of the isothermal model, both qualitatively and quantitatively-for instance, in the case of blow-up solutions \cite{BCK}.
Notably, the singularities of the peaked solitary waves in the isothermal model are milder than those in the pressureless model, making the cold ion limit of the peaked solitary waves a singular limit problem. In fact, we observe a \textit{transition layer} (i.e., the region where the shape of the solution to the isothermal model changes drastically from that to the pressureless model) near the peak, with a thickness of $\kappa^{3/4}$ (see Figure \ref{FigNume00}).
Using precise information about the singularities of the peaked solitary waves, we study the convergence of the peaked solitary waves in the H\"older norm as $\kappa \to 0$ (Theorem \ref{thm3}), providing the sharp range of  H\"older exponents.

\subsection{Main results}
 Plugging the Ansatz $(\rho,v,\phi)(\xi)$ into \eqref{EP2}, where $\xi:=x-ct$,  one obtains  the traveling wave equations
\begin{subequations}\label{TravelEq}
\begin{align}[left = \empheqlbrace\,]
& -c \rho' + (\rho v)'= 0, \label{TravelEq1} \\
& \rho(-cv' + vv') + \kappa \rho' = -\rho \phi',\label{TravelEq2}\\
& - \phi'' = \rho - e^\phi,\label{TravelEq3}
\end{align}
\end{subequations}
where $'$ denotes the derivative in $\xi$. For the far-field condition, we impose
\begin{equation}\label{bdCon+-inf}
\rho \to 1, \;\; v \to 0, \;\; \phi \to 0 \quad \text{as} \quad  \xi  \to  -\infty.
\end{equation}

Let us clearly define the notion of peaked solitary waves. Owing to translation invariance, we set the location of the peak  $\xi =0$  without loss of generality.

\begin{defi*}\label{solitary-def}
$(\rho,v,\phi)(\xi)$ is called a \emph{peaked solitary wave} solution to  \eqref{TravelEq}--\eqref{bdCon+-inf}
if it satisfies  \eqref{TravelEq}--\eqref{bdCon+-inf} for $\xi \in \mathbb{R} \setminus\{0\}$ in the classical sense, and the following hold:
 \begin{enumerate}[(i)]
\item (symmetry)
\begin{equation}  
 \rho(\xi)=\rho(-\xi),\;\; v(\xi)=v(-\xi), \;\; \phi(\xi)=\phi(-\xi) \quad  \text{for} \quad \xi \neq 0, \label{Symmetric} 
\end{equation}
\item (monotonicity)
\begin{equation}\label{mono}
 \rho'(\xi), \;\; v'(\xi), \;\; \phi'(\xi) > 0  \quad  \text{for} \quad \xi \in (-\infty,0),
\end{equation}
\item (singularity at the peak) 
\begin{equation}\label{peak}
\text{at } \xi=0, \text{the right derivative of } (\rho,v)(\xi)  \text{ is not equal to the left derivative}. 
\end{equation}
\end{enumerate}
\end{defi*}  

Due to \eqref{bdCon+-inf}, the peaked solitary waves $(\rho,v,\phi)$ satisfy
\begin{equation*}\label{SignOfSols}
\rho(\xi)>1, \;\; v(\xi)>0, \;\; \phi(\xi)>0 \quad \text{for} \quad \xi \neq 0. 
\end{equation*}
If \eqref{peak} is not satisfied, the solution is called a smooth solitary wave. 

Before discussing the existence of solitary waves, we first introduce some preliminaries. For  $\kappa>0$, let $c_\kappa$ be a unique root of   the equation
\begin{equation}\label{Aux3 lem-1}
z^{\kappa}\left[(z - \sqrt{\kappa})^2 + 1 \right]  =
\exp \left( \left(z^2 - \kappa \right)/2\right)  \kappa^{\kappa/2}
\end{equation}
on the interval  $(\sqrt{\kappa},\infty)$. It holds that
$c_{\kappa}>\sqrt{1+ \kappa}$ (see Appendix). 
Let $c_0>1$ be a unique positive root of
\begin{equation}\label{Eq z0 Cold-1}
z^2+1 = \exp(z^2/2).
\end{equation}  

For $c>0$ satisfying $\sqrt{1+\kappa} < c < c_\kappa$ ($1 < c <  c_0$ when $\kappa=0$), the problem \eqref{TravelEq}--\eqref{bdCon+-inf} admits a unique  smooth  solution $(\rho,v,\phi)(\xi) \in C^\infty(\mathbb{R})$ satisfying  \eqref{Symmetric}--\eqref{mono}, i.e., a smooth solitary wave solution, \cite{Cor,Satt,Sag}.

For $c=c_\kappa$, \eqref{TravelEq}--\eqref{bdCon+-inf} admit a peaked solitary wave $(\rho,v,\phi) \in C^{\infty}(\mathbb{R}\setminus \{0\})$.  The existence of the peaked solitary waves itself is addressed in \cite{Cor} and \cite{Satt}.
Nevertheless, we restate it below to suit our purpose and present the detailed proof in Section 2 since it contains important information which will be used in the proof of our main results.\footnote{Theorem 5.1 in \cite{Cor}, where the isothermal Euler-Poisson system is considered, states that the solitary wave solution for $c=c_\kappa$ is smooth. To be more precise,  it is in fact Lipschitz continuous at the peak, as stated in \eqref{Thm2_2}.  For the pressureless case, see Theorems 4.1--4.2 of \cite{Satt}.}
Let 
\[
\rho^\ast:= \sup_{\xi\in \mathbb{R}\setminus\{0\}} \rho, \quad  v^\ast:= \sup_{\xi\in \mathbb{R}\setminus\{0\}} v, \quad \phi^\ast:= \sup_{\xi\in \mathbb{R}\setminus\{0\}} \phi.
\]
\begin{proposition}
	\label{MainThm} 
For $\kappa>0$, let $c_\kappa$ be a unique root of \eqref{Aux3 lem-1} satisfying $c_\kappa>\sqrt{1+\kappa}$. For  $\kappa=0$, let $c_0$ be a unique root of \eqref{Eq z0 Cold-1} satisfying $c_0>1$. 
Consider the system \eqref{TravelEq}--\eqref{bdCon+-inf} with  $c=c_\kappa$.  Then, it admits a unique peaked solitary wave solution $(\rho,u,\phi)$  satisfying \eqref{Symmetric}--\eqref{peak}. The solution $(\rho,u,\phi)$ exponentially decays to $(1,0,0)$ as $|\xi|\to \infty$. Furthermore,   when $\kappa=0$, we have
\begin{equation}\label{Maxk0}
\rho^\ast=+\infty, \quad v^\ast=c_0, \quad  \phi^\ast=c_0^2/2;
\end{equation}
when $\kappa>0$, we have
\begin{equation}\label{Maxk}
\rho^\ast=c_\kappa/\sqrt{\kappa}, \quad v^\ast=c_\kappa-\sqrt{\kappa}, \quad \phi^\ast=H(\rho^\ast),
\end{equation}
where $H$ is defined in \eqref{Def_H}.
\end{proposition}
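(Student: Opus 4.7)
My plan is to reduce \eqref{TravelEq}--\eqref{bdCon+-inf} to a first-order scalar ODE in $\phi$ and then read off existence, the peak values \eqref{Maxk0}--\eqref{Maxk}, and the transcendental identities \eqref{Aux3 lem-1}--\eqref{Eq z0 Cold-1} from its phase portrait. Integrating the conservative form of \eqref{TravelEq1}--\eqref{TravelEq2} and using \eqref{bdCon+-inf} gives $\rho(c-v)=c$ and $(c-v)^2/2+\kappa\ln\rho+\phi=c^2/2$. Eliminating $v=c-c/\rho$ yields $\phi=H(\rho):=\frac{c^2}{2}(1-\rho^{-2})-\kappa\ln\rho$, matching \eqref{Def_H}. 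Inserting $\rho(\phi):=H^{-1}(\phi)$ into \eqref{TravelEq3}, multiplying by $\phi'$, and integrating (the far-field fixing the constant) produces $(\phi')^2=F(\phi):=2\bigl(e^{\phi}-1-R(\phi)\bigr)$, where $R(\phi):=\int_0^{\phi}\rho(s)\,ds$.

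The second step is to analyze $F$. From $H'(\rho)=(c^2-\kappa\rho^2)/\rho^3$, for $\kappa>0$ the map $H$ is strictly increasing on $[1,\rho^*]$ with $\rho^*=c/\sqrt{\kappa}$ and $H''(\rho^*)<0$; for $\kappa=0$ it is strictly increasing on $[1,\infty)$ with $H(\rho)\uparrow c^2/2$. The change of variable $s=H(r)$ gives $R(H(\rho))=\int_1^{\rho}rH'(r)\,dr$, so $R(H(\rho^*))=(c-\sqrt{\kappa})^2$ when $\kappa>0$ and $R(c^2/2)=c^2$ when $\kappa=0$. The critical-level condition $F(\phi^*)=0$ then reduces, after rearrangement, precisely to \eqref{Aux3 lem-1} with $z=c_\kappa$ when $\kappa>0$ and to \eqref{Eq z0 Cold-1} when $\kappa=0$. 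Expanding $F$ at $0$ via $\rho(\phi)\approx 1+\phi/(c^2-\kappa)$ yields $F(\phi)=\frac{c^2-1-\kappa}{c^2-\kappa}\phi^2+O(\phi^3)$, which is strictly positive for $c>\sqrt{1+\kappa}$ and furnishes the exponential decay rate $\lambda=\sqrt{(c^2-1-\kappa)/(c^2-\kappa)}$ as $\xi\to-\infty$. Combined with a monotonicity argument for $\phi\mapsto e^{\phi}-1-R(\phi)$, this gives $F>0$ on $(0,\phi^*)$.

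I then build the profile by integrating $\phi'=+\sqrt{F(\phi)}$ on $(-\infty,0)$ with $\phi\to 0$ at $-\infty$ and normalizing the first time $\phi$ reaches $\phi^*$ to $\xi=0$, and reflecting; setting $\rho:=H^{-1}(\phi)$ and $v:=c-c/\rho$ recovers a classical solution of \eqref{TravelEq}--\eqref{bdCon+-inf} off $\xi=0$, and \eqref{Symmetric}--\eqref{mono} are automatic. For $\kappa>0$: both $\phi'$ and $H'(\rho)$ vanish linearly as $\xi\to 0^-$ (by expanding $F$ around $\phi^*$ with $F'(\phi^*)<0$ and $H$ around $\rho^*$ with $H''(\rho^*)<0$), so $\rho'=\phi'/H'(\rho)$ has a finite nonzero one-sided limit; hence $\rho$ is Lipschitz with $\rho(0)=c_\kappa/\sqrt{\kappa}$, and symmetry forces $\rho'(0^+)=-\rho'(0^-)\ne\rho'(0^-)$, giving \eqref{peak} and the values in \eqref{Maxk}. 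For $\kappa=0$: $\phi=\frac{c^2}{2}(1-\rho^{-2})$ with $\phi(0^-)=c_0^2/2$ forces $\rho\to\infty$ and $v=c-c/\rho\to c_0$, yielding \eqref{Maxk0}. Uniqueness of the orbit in the $(\phi,\phi')$-plane translates to uniqueness of the wave. The main obstacle I anticipate is the sign/monotonicity analysis of $F$---checking that $\phi^*$ is the \emph{first} positive zero of $F$, and that $c_\kappa$ is the \emph{unique} root of \eqref{Aux3 lem-1} above $\sqrt{1+\kappa}$---both of which reduce to one-variable calculus and can be handled in the appendix.
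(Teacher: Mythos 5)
Your proposal is correct and is, at bottom, the same phase-plane/first-integral argument as the paper, written in $(\phi,\phi')$ coordinates instead of $(\rho,E)=(\rho,-\phi')$: your $F(\phi)=2\bigl(e^{\phi}-1-R(\phi)\bigr)$ is exactly $2\bigl(g(\rho)-g(1)\bigr)$ from the paper's first integral \eqref{1st Int} after the substitution $\rho=H^{-1}(\phi)$ (indeed $R(H(\rho))=c^2+\kappa-c^2/\rho-\kappa\rho$), your monotonicity analysis of $\phi\mapsto e^{\phi}-1-R(\phi)$ is the paper's Lemma \ref{Lemma2} on $g$, and your quadratic expansion of $F$ at $\phi=0$ reproduces the saddle eigenvalue $\lambda^2=(c^2-1-\kappa)/(c^2-\kappa)$ and hence the exponential decay. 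The paper deliberately avoids the $\phi$-formulation for $\kappa>0$ because $H^{-1}$ is not explicit, but your change of variables $s=H(r)$ in $R$ sidesteps that cleanly, so the choice of coordinates is a matter of taste; your quadrature $\xi=\int d\phi/\sqrt{F(\phi)}$ replaces the paper's stable manifold theorem plus the integral \eqref{333} and is, if anything, more elementary.

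One step you should make explicit, since the paper spends real effort on it: the peak must be attained at \emph{finite} $\xi$, i.e.\ $\int^{\phi^*}F(\phi)^{-1/2}\,d\phi<\infty$ near $\phi^*$. For $\kappa>0$ this follows from your observation that $F'(\phi^*)<0$ (simple zero). For $\kappa=0$, however, $F'(\phi)\to-\infty$ as $\phi\to\phi^*=c_0^2/2$ and $F$ vanishes like $2c_0\sqrt{c_0^2-2\phi}$, so the integrand behaves like $(c_0^2-2\phi)^{-1/4}$ — still integrable, but this is exactly the content of the paper's estimate $g(\rho)-g(1)=c_0^2\rho^{-1}+O(\rho^{-2})$ as $\rho\to\infty$ and the convergence of \eqref{333}, and your write-up currently presupposes it when you speak of ``the first time $\phi$ reaches $\phi^*$.'' With that one-line check added, the argument is complete.
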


Our first main theorem provides the exact behaviors of singularities of the peaked solitary waves to the Euler-Poisson system. 
\begin{theorem}\label{Thm2}
For $c=c_\kappa$, let $(\rho,v,\phi)$ be the solution to \eqref{TravelEq}--\eqref{bdCon+-inf}  satisfying \eqref{Symmetric}--\eqref{peak}.
Then, the following hold: 
\begin{enumerate}
\item When $\kappa=0$, 
\begin{equation}\label{Thm2_1} 
\left\{\begin{array}{l l}
& \lim_{\xi \to 0}\frac{\phi^\ast-\phi}{|\xi|^{4/3}}=A_0:=\frac{1}{2}\left(3 \sqrt{\frac{c_0}{2}} \right)^{4/3}, \quad  \lim_{\xi \to 0}\frac{|\phi'|}{|\xi|^{1/3}}=\frac{4}{3}A_0,  \\
& \lim_{\xi \to 0}|\phi''||\xi|^{2/3}=\frac{4}{9}A_0 = \lim_{\xi \to 0} \rho|\xi|^{2/3}, \quad \lim_{\xi \to 0}\frac{v^\ast-v}{|\xi|^{2/3}}=\sqrt{2A_0}.
\end{array} \right.
\end{equation}  
\item  When $\kappa>0$, 
\begin{subequations}\label{Thm2_2}
\begin{align}[left = \empheqlbrace\qquad]
& \lim_{\xi \to  0}\frac{\phi^\ast - \phi}{|\xi|^2} = \lim_{\xi \to 0} \frac{-\phi'}{2|\xi|} = \lim_{\xi \to 0}\frac{-\phi''}{2} = \frac{1}{2} (\rho^\ast-e^{\phi^\ast})>0, \label{Thm2_3} \\
&  \lim_{\xi \to 0} \frac{\rho^\ast -\rho}{|\xi|} = \frac{(\rho^\ast)^2}{c_\kappa} \left( \lim_{\xi \to 0} \frac{v^\ast-v}{|\xi|} \right) = \sqrt{\frac{\rho^\ast - e^{\phi^\ast}}{-\frac{dh}{d\rho}(\rho^\ast)}}, \label{Thm2_4}
\end{align}
\end{subequations}
where $h(\rho)$ is given by \eqref{Def_h}.
\end{enumerate}
\end{theorem}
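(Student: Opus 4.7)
The plan is to reduce the traveling-wave system to a single first-order ODE in $\phi$, analyze the right-hand side near $\phi = \phi^\ast$, and transfer the resulting asymptotics back to $\rho$ and $v$ via algebraic identities. From \eqref{TravelEq1}, \eqref{TravelEq2} and \eqref{bdCon+-inf} one integrates to $\rho v = c(\rho-1)$ and $(v-c)^2/2 + \kappa\ln\rho + \phi = c^2/2$; eliminating $v$ yields the Bernoulli relation $\phi = H(\rho)$, which is strictly increasing on $[1,\rho^\ast]$. Multiplying \eqref{TravelEq3} by $\phi'$, integrating from $-\infty$ and writing $\rho$ as a function of $\phi$ produces the key ODE
\[
(\phi')^2 = 2G(\phi), \qquad G(\phi) := e^\phi - 1 - \int_0^\phi \rho(s)\,ds,
\]
valid on $(-\infty,0)$. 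Evaluating the integral by the change of variables $s = H(\rho)$ shows that the condition $G(\phi^\ast) = 0$ is precisely \eqref{Aux3 lem-1} (resp. \eqref{Eq z0 Cold-1}) defining $c_\kappa$. Near the peak I will parametrize by $\phi$ using
\[
|\xi| \;=\; \int_{\phi}^{\phi^\ast}\bigl(2G(\sigma)\bigr)^{-1/2} d\sigma,
\]
which is a true identity on $(-\infty,0)$ by the monotonicity \eqref{mono}, and read off each asymptotic by expanding the integrand.

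For $\kappa > 0$, $H$ attains its maximum at $\rho^\ast = c_\kappa/\sqrt{\kappa}$ with $H'(\rho^\ast) = 0$ and $H''(\rho^\ast) < 0$, so the local inverse satisfies $\rho^\ast - \rho \sim \sqrt{(\phi^\ast - \phi)/C}$ with $C = -H''(\rho^\ast)/2$, which is the constant entering \eqref{Thm2_4} through the function $h$ of \eqref{Def_h}. Since $G(\phi^\ast) = 0$ and $G'(\phi^\ast) = e^{\phi^\ast} - \rho^\ast < 0$ (the inequality following from $-\phi'' = \rho - e^\phi > 0$ just before the peak), Taylor expansion gives
\[
G(\phi) = (\rho^\ast - e^{\phi^\ast})(\phi^\ast - \phi) + O\bigl((\phi^\ast - \phi)^{3/2}\bigr).
\]
Inserting this into the parametric formula and integrating yields $\phi^\ast - \phi = \tfrac12(\rho^\ast - e^{\phi^\ast})\xi^2 + o(\xi^2)$, which is \eqref{Thm2_3}; the limit of $\phi'$ follows from $|\phi'| = \sqrt{2G(\phi)}$ and the limit of $\phi''$ from \eqref{TravelEq3}. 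The claim for $\rho$ in \eqref{Thm2_4} then comes from the expansion of $H^{-1}$ above, and the claim for $v$ is obtained from $v = c(1 - 1/\rho)$ together with $v^\ast = c - \sqrt{\kappa}$.

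For $\kappa = 0$, $H(\rho) = c_0^2/2 - c_0^2/(2\rho^2)$, so $\rho^\ast = \infty$ and $\rho(\phi) = c_0/\sqrt{2(\phi^\ast - \phi)}$. Explicit integration gives $\int_0^\phi \rho\,ds = c_0^2 - c_0\sqrt{2(\phi^\ast - \phi)}$, and combining with $e^{\phi^\ast} = 1 + c_0^2$ yields
\[
G(\phi) = c_0\sqrt{2(\phi^\ast - \phi)} - (1+c_0^2)(\phi^\ast - \phi) + O\bigl((\phi^\ast - \phi)^2\bigr),
\]
a degenerate zero whose leading behaviour is the half power $c_0\sqrt{2(\phi^\ast - \phi)}$. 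Inserting into the parametric formula and integrating the leading piece gives
\[
|\xi| = \tfrac{4}{3}(2c_0)^{-1/2} 2^{-1/4}(\phi^\ast - \phi)^{3/4}\bigl(1 + o(1)\bigr),
\]
which inverts to $\phi^\ast - \phi = A_0 |\xi|^{4/3} + o(|\xi|^{4/3})$ with $A_0 = \tfrac12 (3\sqrt{c_0/2})^{4/3}$. The remaining assertions in \eqref{Thm2_1} follow pointwise from $|\phi'| = \sqrt{2G(\phi)}$, from $-\phi'' = \rho - e^\phi$ (so $|\phi''| \sim \rho$ as $\phi \to \phi^\ast$), and from $v^\ast - v = c_0/\rho = \sqrt{2(\phi^\ast - \phi)} \sim \sqrt{2A_0}|\xi|^{2/3}$.

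The main obstacle is to promote these heuristic leading-order computations to genuine limits. The parametric identity $|\xi| = \int (2G)^{-1/2}d\sigma$ is the technical heart, since it converts an expansion of $G$ directly into an expansion of $|\xi|$ without ever having to differentiate an asymptotic. The delicate case is $\kappa = 0$: there the subleading correction $-(1+c_0^2)(\phi^\ast - \phi)$ in $G$ is only a half power smaller than the leading $c_0\sqrt{2(\phi^\ast - \phi)}$, so one must verify that, after integrating $(2G)^{-1/2}$, this correction produces a genuinely higher-order term in $|\xi|$ and does not contaminate the coefficient $A_0$. Once this uniform error control is in place, the asymptotics for $\phi'$, $\phi''$, $\rho$, and $v$ are read off directly from the algebraic relations $|\phi'| = \sqrt{2G(\phi)}$, $-\phi'' = \rho - e^\phi$, $\phi = H(\rho)$, and $v = c(1 - 1/\rho)$.
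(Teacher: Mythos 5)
Your proposal is correct and follows essentially the same route as the paper: the identity $(\phi')^2 = 2G(\phi)$ is exactly the paper's first integral (equations \eqref{111_3} for $\kappa=0$ and \eqref{Aux 4}--\eqref{1st Int} in general), the expansion $G(\phi)= c_0\sqrt{2(\phi^\ast-\phi)}-(1+c_0^2)(\phi^\ast-\phi)+\cdots$ is the paper's \eqref{111_3}, and the transfer to $\rho$ and $v$ via $\phi=H(\rho)$ and $v=c(1-1/\rho)$ is identical. The only cosmetic differences are that you integrate $d\xi/d\phi=(2G)^{-1/2}$ where the paper integrates $(\Phi^{3/4})'$, and that for $\kappa>0$ the paper gets \eqref{Thm2_3} directly by L'H\^opital using the continuity of $\phi''$ up to $\xi=0^{\pm}$ rather than through the first integral; the strict inequality $\rho^\ast>e^{\phi^\ast}$ that you invoke is supplied by \eqref{hat k} from the existence analysis.
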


When $\kappa=0$, we see that $\rho-1 \in L^p(\mathbb{R})$ for $1\leq p<3/2$ and $v$ behaves like $|\xi|^{2/3}$ near $\xi=0$.
Hence, the peaked solitary wave solution of \eqref{TravelEq}--\eqref{bdCon+-inf} (and \eqref{EP2}) can be defined by  the distributional sense near $\xi = 0$.
When $\kappa>0$, the peaked solitary wave solution is Lipschitz continuous at $\xi=0$. The relatively relaxed singularities at $\xi=0$ is due to the regularizing effect of the pressure term.  
 
In Figures \ref{FigNume1} and \ref{FigNume2}, we present numerical demonstrations of the asymptotic shapes of the singularities. MATLAB’s ode89 is used to solve the initial value problem for the system of ODEs \eqref{ODE_n_E} with $c=c_\kappa$. The initial conditions are suitably chosen using the first integral \eqref{1st Int}. \\  

From Theorem \ref{Thm2}, we see that the behaviors of the peaked solitary waves near $\xi=0$ are not regular in the cold ion limit (as $\kappa \to 0$).
Indeed, there is a \textit{transition} layer near $\xi=0$ whose length tends to zero as $\kappa \to0$ (see Remark \ref{Rem1} and Figure \ref{FigNume00}).
Our second main result provides the cold ion limit of the peaked solitary waves in a strong sense with an optimal H\"older regularity. 
 
Throughout the paper, we will use the subscript $\kappa$ to indicate the dependence on $\kappa \geq 0$ when necessary.
\begin{theorem}	\label{thm3}
For any $\alpha\in(0,1/3)$, $\beta\in(0,2/3)$ and $p \in [1, 3/2)$, it holds that  
\begin{equation}\label{Eq_Thm3}
\phi_\kappa \to \phi_0 \quad \text{ in } C^{1,\alpha}(\mathbb{R}),
\end{equation}
\begin{equation}\label{Eq_Thm3_2}
\rho_\kappa - 1 \to \rho_0 -1 \quad \text{ in } L^p(\mathbb{R}),
\end{equation}
and
\begin{equation}\label{Eq_Thm3_3}
v_\kappa \to v_0 \quad \text{ in } C^{\beta}(\mathbb{R})
\end{equation}
as $\kappa \to 0$.
\end{theorem}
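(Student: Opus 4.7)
The plan is to decompose $\mathbb{R}$ into a \emph{transition layer} $I_\kappa := \{|\xi|\le \delta_\kappa\}$ of width $\delta_\kappa:=\kappa^{3/4}$ and its complement $E_\kappa$. The scale $\delta_\kappa=\kappa^{3/4}$ is dictated by matching the isothermal quadratic behavior $\phi_\kappa^\ast-\phi_\kappa(\xi)\sim\tfrac{1}{2}(\rho_\kappa^\ast-e^{\phi_\kappa^\ast})\xi^{2}\sim\kappa^{-1/2}\xi^{2}$ from \eqref{Thm2_3} against the cold-ion behavior $\phi_0^\ast-\phi_0(\xi)\sim A_0|\xi|^{4/3}$ from \eqref{Thm2_1}; both agree at $|\xi|\sim\kappa^{3/4}$. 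On $E_\kappa$ the profiles are smooth and are compared via ODE continuous dependence, while on $I_\kappa$ the differences $\phi^\ast-\phi$ and $v^\ast-v$ together with their derivatives are quantitatively small and $\rho$ is large but with comparable $L^p$-mass for $p<3/2$. The threshold exponents $\alpha<1/3$, $\beta<2/3$, $p<3/2$ of the theorem emerge precisely as the range where the inner contributions vanish.

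\textbf{Outer convergence.}
I would first verify $c_\kappa\to c_0$ from \eqref{Aux3 lem-1} and \eqref{Eq z0 Cold-1}, whence Proposition \ref{MainThm} gives $v_\kappa^\ast=c_\kappa-\sqrt{\kappa}\to c_0=v_0^\ast$ and $\phi_\kappa^\ast\to c_0^2/2=\phi_0^\ast$, while $\rho_\kappa^\ast=c_\kappa/\sqrt{\kappa}\to\infty$ reflects the singular character of the limit. Using the first integral of \eqref{TravelEq} derived in Section~2, each profile admits an implicit parametrization $\xi=\xi_\kappa(\phi)$ whose integrand depends continuously on $\kappa$ as long as $\phi$ stays away from $\phi_\kappa^\ast$. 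Inverting yields locally uniform $C^{k}$ convergence of $(\rho_\kappa,v_\kappa,\phi_\kappa)\to(\rho_0,v_0,\phi_0)$ on every compact subset of $\mathbb{R}\setminus\{0\}$. Combined with a $\kappa$-uniform exponential decay at $|\xi|\to\infty$ (the linearization about $(1,0,0)$ has decay rate bounded below for $\kappa\in[0,\kappa_0]$), this gives smooth convergence on $E_\kappa$ at a rate that beats any polynomial in $\delta_\kappa$.

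\textbf{Inner estimates.}
Refining Theorem \ref{Thm2} to extract uniform-in-$\kappa$ bounds from the first integral, for $|\xi|\le\delta_\kappa$ one has $|\phi_\kappa'(\xi)|\lesssim \kappa^{-1/2}|\xi|$, $|\phi_0'(\xi)|\lesssim|\xi|^{1/3}$, $\rho_\kappa^\ast\lesssim\kappa^{-1/2}$, $\rho_0(\xi)\lesssim|\xi|^{-2/3}$, and $|v_\kappa^\ast-v_\kappa(\xi)|\lesssim \kappa^{-1/4}|\xi|$, $|v_0^\ast-v_0(\xi)|\lesssim|\xi|^{2/3}$, the Lipschitz constant $\kappa^{-1/4}$ for $v_\kappa$ being read off from matching to the cold profile at $|\xi|=\delta_\kappa$. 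Plugging in $\delta_\kappa=\kappa^{3/4}$,
\[
\|\phi_\kappa'\|_{L^\infty(I_\kappa)}+\|\phi_0'\|_{L^\infty(I_\kappa)}\lesssim \kappa^{1/4},\qquad [\phi_\kappa']_{C^\alpha(I_\kappa)}+[\phi_0']_{C^\alpha(I_\kappa)}\lesssim \kappa^{(1-3\alpha)/4},
\]
\[
\|\rho_\kappa\|^p_{L^p(I_\kappa)}+\|\rho_0\|^p_{L^p(I_\kappa)}\lesssim \kappa^{(3-2p)/4},\qquad [v_\kappa]_{C^\beta(I_\kappa)}+[v_0]_{C^\beta(I_\kappa)}\lesssim \kappa^{(2-3\beta)/4},
\]
each of which vanishes as $\kappa\to 0$ exactly in the ranges $\alpha<1/3$, $p<3/2$, $\beta<2/3$. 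The full Hölder and $L^p$ norms are then assembled by splitting pairs $(x,y)$ into inner--inner, outer--outer, and mixed (inserting an intermediate point at $|\xi|=\delta_\kappa$ in the last case), controlled respectively by the inner estimates, the outer $C^k$-convergence, and a triangle combination of the two.

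\textbf{Main obstacle.}
The technical heart is passing from Theorem \ref{Thm2}, which asserts only pointwise-in-$\kappa$ asymptotics at $\xi=0$, to the uniform-in-$\kappa$ inner bounds listed above. I would therefore revisit the integrated identity $(\phi_\kappa')^2=2G_\kappa(\phi)$ near $\phi=\phi_\kappa^\ast$, expanding $G_\kappa$ (which vanishes together with its first $\rho$-derivative at $\rho=\rho_\kappa^\ast$, by the defining property of $c_\kappa$) with a quantitative Taylor remainder whose dependence on $\kappa\in(0,\kappa_0]$ is made explicit. Once this uniform expansion is in hand, the scale $\delta_\kappa=\kappa^{3/4}$ is exactly the one that balances the two regimes and pins down the threshold exponents $1/3$, $2/3$, $3/2$ appearing in the statement.
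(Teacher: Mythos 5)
Your overall strategy mirrors the paper's: a transition layer of width $\kappa^{3/4}$, matching the quadratic inner behavior against the $|\xi|^{4/3}$ cold profile, and all of your inner scalings (including the Lipschitz constant $\kappa^{-1/4}$ for $v_\kappa$ and the exponents $(1-3\alpha)/4$, $(3-2p)/4$, $(2-3\beta)/4$) are correct. However, there is a genuine gap in how you treat the region $\kappa^{3/4}\lesssim|\xi|\lesssim 1$. You assign it to the ``outer'' set $E_\kappa$ and claim it is controlled by ODE continuous dependence, but the convergence you derive there is only locally uniform on compact subsets of $\mathbb{R}\setminus\{0\}$, while this intermediate region accumulates at $\xi=0$ as $\kappa\to0$. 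In that region $\phi_\kappa''=e^{\phi_\kappa}-H_\kappa^{-1}(\phi_\kappa)\simeq -|\xi|^{-2/3}$ blows up, so no $\kappa$-uniform $C^k$ bound (for $k\ge 2$) holds near $0$ and ``outer--outer'' pairs with both points in $[\kappa^{3/4},\epsilon_0]$ are not controlled by smooth convergence. What is actually needed there is a $\kappa$-uniform two-sided bound $\phi_\kappa^\ast-\phi_\kappa\simeq|\xi|^{4/3}$, $|\phi_\kappa'|\simeq|\xi|^{1/3}$, $|\phi_\kappa''|\simeq(\phi_\kappa^\ast-\phi_\kappa)^{-1/2}$; this is the content of the paper's Proposition \ref{prop3.4_1}, whose proof hinges on the uniform behavior of $H_\kappa^{-1}$ (Lemma \ref{lem3.3}) and on checking that the matching quantity $(\psi_\kappa')^2/\psi_\kappa^{1/2}$ at the layer boundary $\xi=N_\kappa\kappa^{3/4}$ stays of order one (see \eqref{eq0824B1}). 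Your proposed fix via the first integral $(\phi_\kappa')^2=2G_\kappa(\phi)$ could in principle deliver this as well, since the intermediate region also corresponds to $\phi$ near $\phi_\kappa^\ast$, but as written your assembled estimate has a hole exactly where the two scalings cross over.

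Two smaller points. First, the $\kappa$-uniform exponential decay does not follow from the linearization rate alone: one must also show that the profiles enter a fixed neighborhood of $(1,0,0)$ at a uniformly bounded $|\xi|$, which the paper does by proving that the inflection point $\xi_\kappa$ (where $e^{\phi_\kappa}=H_\kappa^{-1}(\phi_\kappa)$) satisfies $\xi_\kappa\simeq 1$ together with a uniform lower bound on $|\phi_\kappa'(\xi_\kappa)|$ (Proposition \ref{prop3.6}). Second, for the route you sketch the identification of the limit is handled by your explicit parametrization, whereas the paper instead bounds $\|\phi_\kappa''\|_{L^p}$ uniformly for $p<3/2$, applies Sobolev embedding and Arzel\`a--Ascoli, and invokes uniqueness of $\phi_0$; either route works once the uniform pointwise bounds on all three regions are established, so the intermediate-region estimate is the one genuinely missing ingredient.
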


We remark that the limit \eqref{Eq_Thm3} does not hold for $\alpha=1/3$.
Indeed, since $\phi_\kappa'(0)=0$ for $\kappa\geq 0$ by Theorem \ref{Thm2}, we have for each $\kappa>0$,
\begin{equation}\label{Eq2_Thm3}
\frac{|(\phi_\kappa'-\phi_0')(\xi) - (\phi_\kappa'-\phi_0')(0)|}{|\xi|^{1/3}} \geq \frac{|\phi_0'(\xi)|}{|\xi|^{1/3}} - \frac{|\phi_\kappa'(\xi)|}{|\xi|^{1/3}}.
\end{equation}
By taking the limsup on both sides of \eqref{Eq2_Thm3}, and then using Theorem \ref{Thm2}, we see that
\[
\begin{split}
\sup_{\xi \neq 0 }\frac{|(\phi_\kappa'-\phi_0')(\xi) - (\phi_\kappa'-\phi_0')(0)|}{|\xi|^{1/3}}   \geq  \limsup_{\xi \to 0}\frac{|\phi_0'(\xi)|}{|\xi|^{1/3}},
\end{split}
\]
where the right-hand side is independent of $\kappa>0$ and strictly positive.

\begin{remark}[Transition layer]
	\label{Rem1}
Roughly speaking, from the perspective of boundary layer problems \cite{GHJT}, $(\rho_0,v_0,\phi_0)$ can be considered as the outer solution in that it describes the behavior of $(\rho_\kappa,v_\kappa,\phi_\kappa)$ in the far-field (outside of the layer near $\xi=0$) for small $\kappa \geq 0$.
The thickness of the boundary layer can be obtained as follows.
Using $\rho^\ast_\kappa = c_\kappa/\sqrt{\kappa}$, \eqref{Def_H}, and Lemma \ref{z_K est}-(4), it is straightforward to check that $\rho^\ast - e^{\phi^\ast} =  c_0/\sqrt{\kappa}  + O(1)$ as $\kappa \to 0.$ Hence, by Theorem \ref{Thm2}, for sufficiently small $\kappa>0$, it holds that
\[
\phi_0^\ast - \phi_0 \simeq |\xi|^{4/3}, \quad 
\phi_\kappa^\ast - \phi_\kappa \simeq \kappa^{-1/2}\xi^2  \quad \text{ as } \quad \xi \to 0. 
\]
Balancing the orders of $|\xi|^{4/3}$ and $\kappa^{-1/2}\xi^2$, we see that $\kappa^{3/4}$ is the thickness of the boundary layer at $\xi = 0$ (see also Figure \ref{FigNume00}). Since our problem is not a boundary layer problem, however,  we will refer to it as the \textit{transition} layer. In Proposition \ref{prop3.4}, we carefully estimate the behavior of the peaked solitary waves inside the transition regime.
For the boundary layer (\textit{plasma sheath}) problems arising in the \textit{quasi-neutral} limit (the Debye length tends to zero) of the Euler-Poisson system for ion dynamics, we refer to \cite{GHR,JKS1,JKS2,JKS3}.
\end{remark}

To the best of our knowledge, there are no results concerning the stability of the peaked solitary waves in the Euler-Poisson system.
Since these waves only exist at the critical traveling speed,  they might appear insignificant at first glance.
Surprisingly, our numerical observations reveal a possible connection between the peaked solitary waves and the $C^1$ blow-up phenomena in the Euler-Poisson system, particularly within the pressureless model.
We will discuss this issue in more detail in Section \ref{S4}.

 \begin{figure}[t]
\begin{center}
\includegraphics[width=0.55\linewidth]{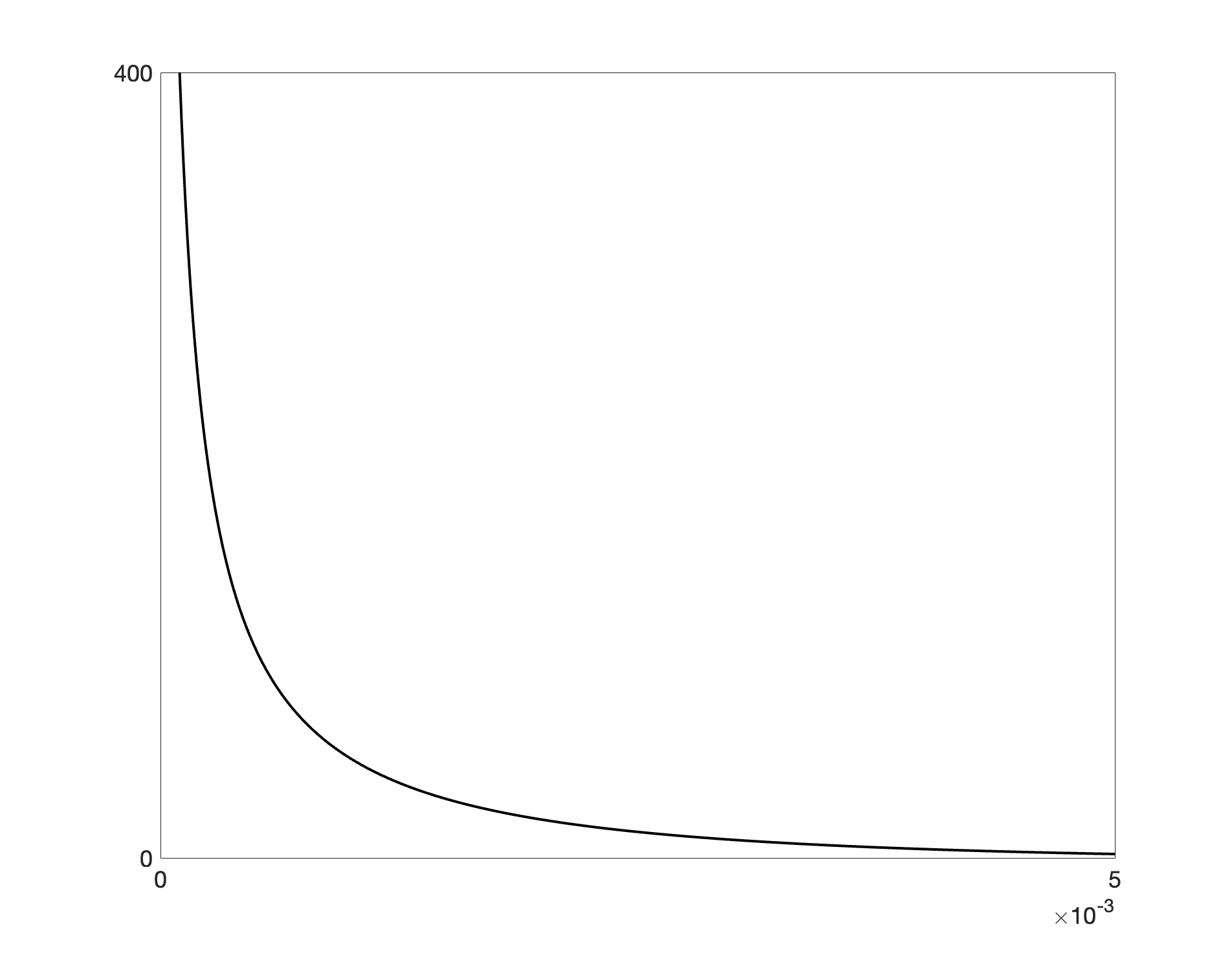}
\end{center}
\caption{A numerical plot of the inner solution $\rho_0(\xi)-\rho_\kappa(\xi)$ on $\xi\in[0,0.005]$ for $\kappa=0.001$. The thinkness of the transition layer is $(0.001)^{3/4} = 0.0056\cdots$, as expected in Remark \ref{Rem1}. }
\label{FigNume00}
\end{figure}

\begin{figure}[h]
\begin{tabular}[]{@{}c@{}@{}c@{}} 
\resizebox{63mm}{!}{\includegraphics{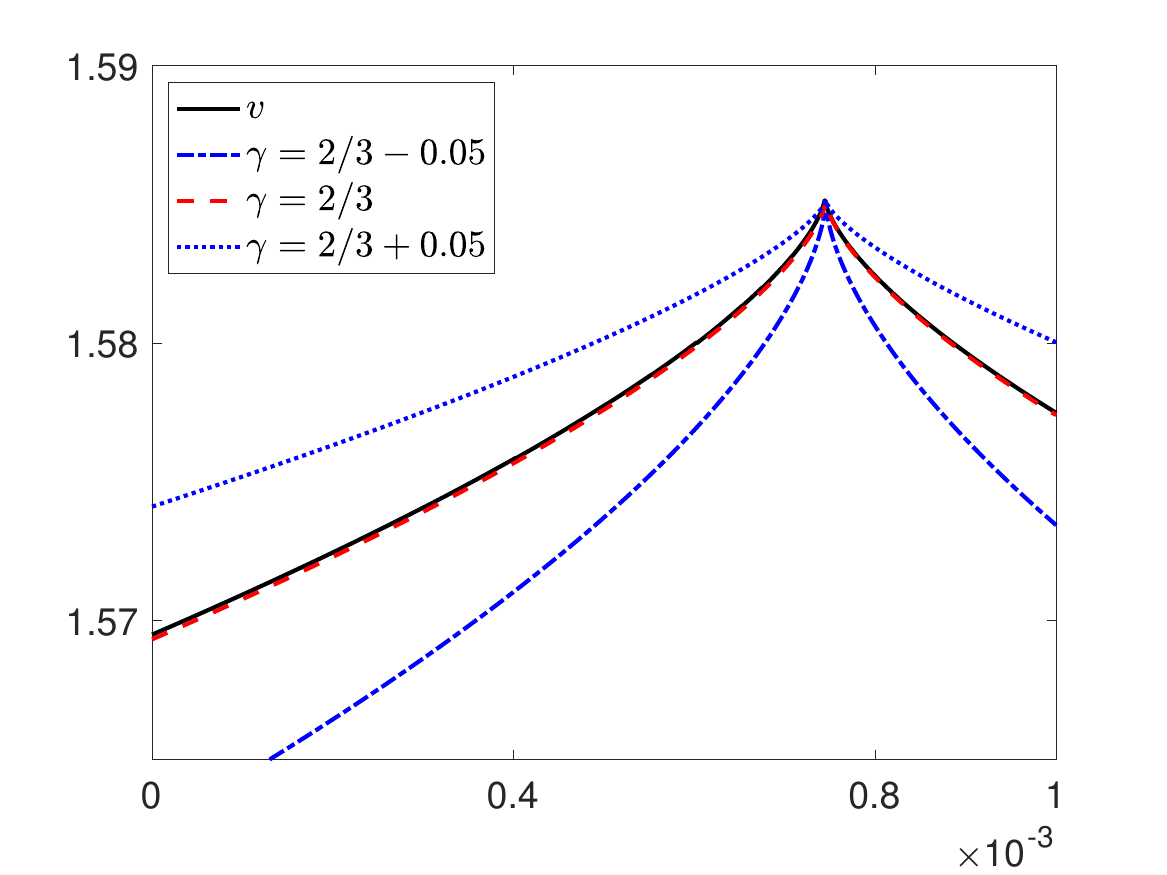}}  & \resizebox{63mm}{!}{\includegraphics{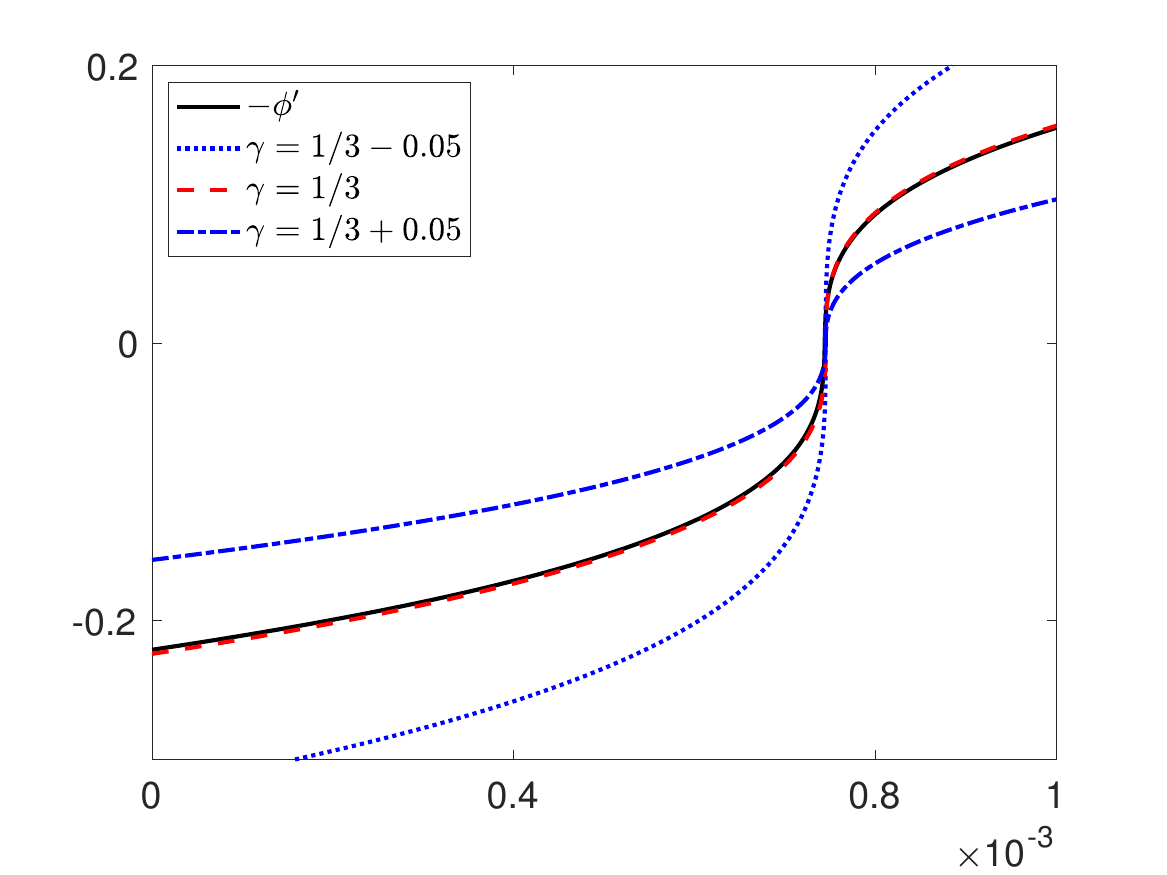}}  
\end{tabular}
\caption{Asymptotic behaviors of solitary waves near the peaks for the pressureless case ($\kappa=0$) at $c=1.58519 < c_0 = 1.585201\cdots$. $\Delta \xi = 10^{-6}.$  Left: $v$ is compared with $v^\ast - \sqrt{2A_0}|\xi|^{\gamma}$, where $A_0$ is given in Theorem \ref{Thm2}.
Right: $-\phi'$ is compared with $\textstyle{\frac{4A_0}{3}\xi^\gamma}$. } 
\label{FigNume1}
\end{figure}

\begin{figure}[h]
\begin{tabular}[]{@{}c@{}@{}c@{}} 
\resizebox{63mm}{!}{\includegraphics{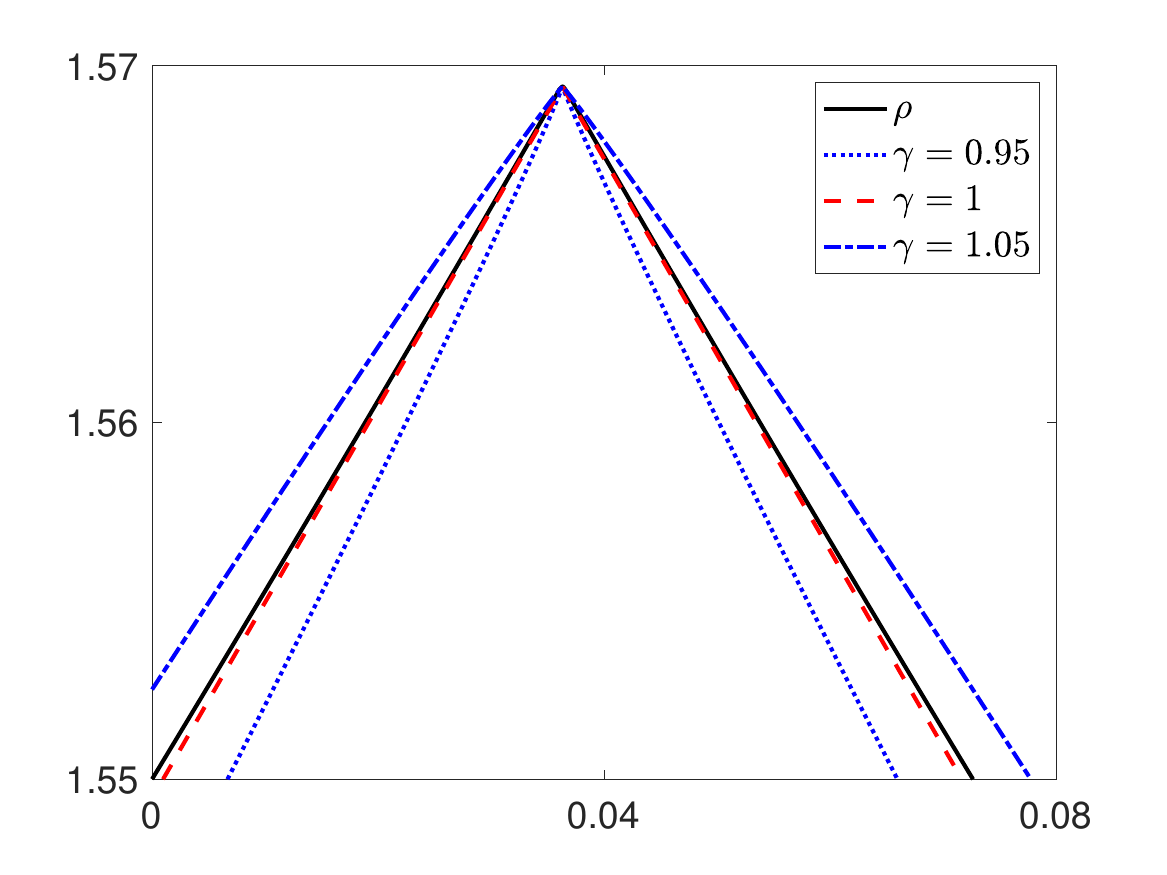}}  & \resizebox{65mm}{!}{\includegraphics{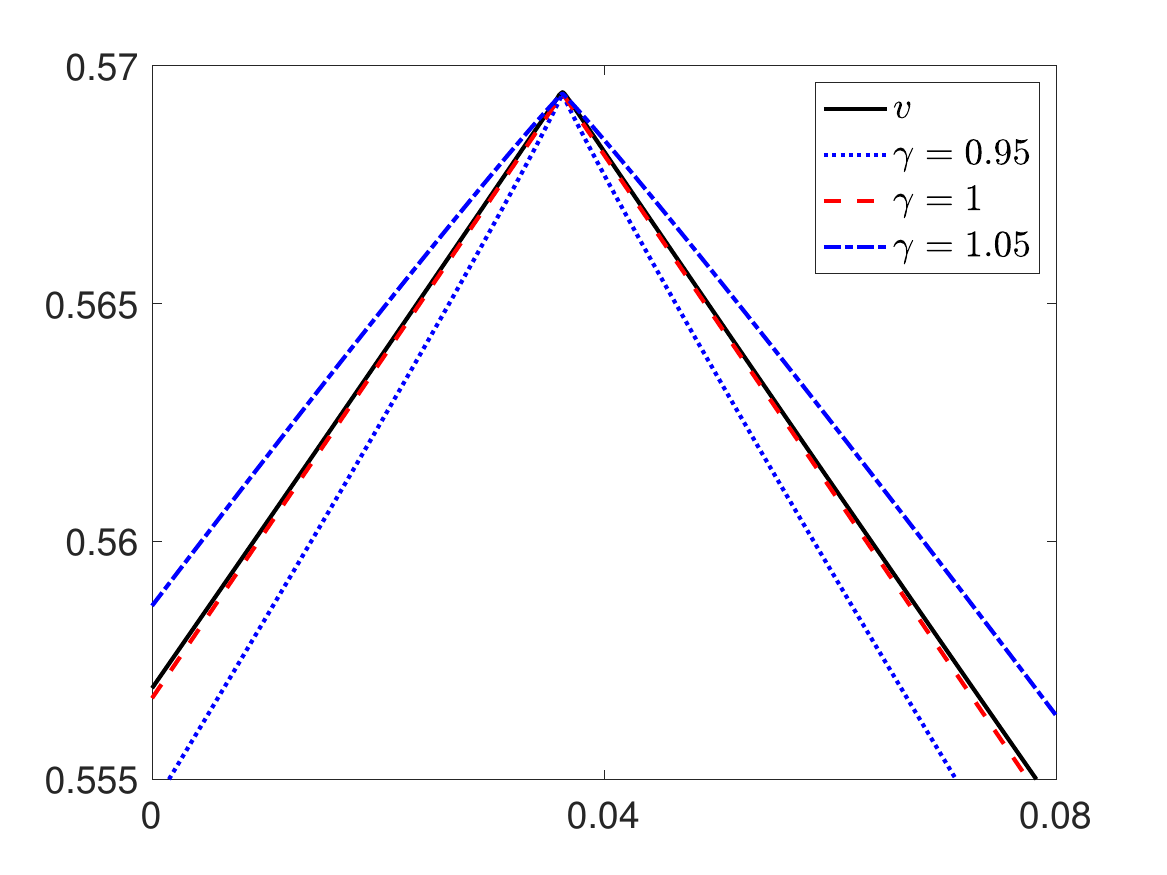}}  
\end{tabular}
\caption{Asymptotic behaviors of solitary waves near the peaks for the isothermal case ($\kappa=1$) at $c=\sqrt{2}+0.15527028$. $\Delta \xi = 10^{-6}.$ For $\kappa=1$, $c_\kappa = \sqrt{2} + 0.1552702843\cdots$.  Left: $\rho$ is compared with $\textstyle{\rho^\ast - \sqrt{-\left(\rho^\ast - e^{\phi^\ast}\right)/\frac{dh}{d\rho}(\rho^\ast)}|\xi|^\gamma}$.   Right: $v$ is compared with $\textstyle{v^\ast - \frac{c_\kappa}{(\rho^\ast)^2}\sqrt{-\left(\rho^\ast - e^{\phi^\ast}\right)/\frac{dh}{d\rho}(\rho^\ast)}|\xi|^\gamma}$. }
\label{FigNume2}
\end{figure}

\subsection{Strategy of the proof}
The proof of Proposition \ref{MainThm} is based on a  phase plane analysis of the first-order ODE system \eqref{ODE_n_E}, which is reduced from \eqref{TravelEq}--\eqref{bdCon+-inf}, as in \cite{BK, Cor}.
It is worth mentioning that the solutions to \eqref{TravelEq}--\eqref{bdCon+-inf} have the following relation:
\[
\phi = H(\rho) := \frac{c^2}{2}\left(1-\frac{1}{\rho^2} \right) - \kappa \log\rho.
\]
When $\kappa = 0$, $H^{-1}(\phi)$ can be explicitly expressed, and the problem is readily reduced to a second-order ODE for $\phi$; See \eqref{11} and \eqref{111_0}. However, for $\kappa>0$, $H(\rho)$ is no longer monotonic on $\rho \in (1, \infty)$, meaning we need to restrict the interval where $H$ is invertible. Furthermore, $H^{-1}(\phi)$ is not explicit when $\kappa > 0$.
Hence, analyzing \eqref{ODE_n_E} is more convenient for the existence proof.

 To establish the cold ion limit of the peaked solitary waves (Theorem \ref{thm3}), we apply Arzela-Ascoli's theorem after deriving a uniform bound of $\|\phi_\kappa\|_{C^{1,\alpha}}$.  For this purpose, we utilize the second-order ODE \eqref{12_0809} for $\phi_\kappa$ rather than the first-order ODE system \eqref{ODE_n_E}.
Since our argument relies on a compactness argument, the uniqueness of the peaked solitary wave $\phi_0$ plays a key role in specifying the limit of $\phi_\kappa$.
Finally, using the relation among $\phi$, $\rho$, and $v$ in \eqref{TravelEqB}, we show the cold ion limit $\rho_\kappa \to \rho_0$ and $ v_\kappa \to v_0$.
 
The proof of the uniform bound of $\|\phi_\kappa\|_{C^{1,\alpha}}$ is divided into a sequence of lemmas and propositions. The main difficulty in obtaining the uniform bound arises from the different behavior of the peaked solitary waves inside and outside the transition regime, whose length tends to zero as $\kappa \to 0$.
The thickness of the transition layer is affected by a nonlinear term $H_\kappa^{-1}$ in the second-order ODE \eqref{12_0809} for $\phi_\kappa$, hence  we need to investigate $H_\kappa^{-1}$ carefully.
In Lemmas \ref{H_K conv} and \ref{lem3.3}, we establish some preliminaries concerning the  behavior of $H_\kappa^{-1}(\phi)$.

We partition the domain $\mathbb{R}$ into three regions:
(i) the interval of transition $I_\kappa \ni 0$, where the solitary waves $\phi_\kappa$ behave as in \eqref{Thm2_2};
(ii) the region $J_\kappa \setminus I_\kappa$, where $\phi_\kappa$ behaves as in \eqref{Thm2_1};
(iii) the far-field (or exponentially decaying) region $\mathbb{R} \setminus J_\kappa$, which contains the unique inflection point $\xi_\kappa$ where the concavity of the solitary waves $\phi_\kappa$ changes.
We analyze the behavior of $\phi_\kappa$ on $I_\kappa$ and $J_\kappa \setminus I_\kappa$ in Propositions \ref{prop3.4} and \ref{prop3.4_1}, respectively.
Lastly, in Proposition \ref{prop3.6}, we obtain the uniform exponential decay of $\phi_\kappa$ on $\mathbb{R} \setminus J_\kappa$ by showing that $\xi_\kappa$ is of order 1. We remark that $\xi_\kappa$ is the point where the center of the first-order ODE system \eqref{ODE_n_E} is attained (compare \eqref{Aux 6} and \eqref{step3eq1}, and see Figure \ref{FigNumeric}).\\

The paper is organized as follows.
In Section 2, we reduce the traveling wave equations \eqref{TravelEq}--\eqref{bdCon+-inf} to a first-order ODE system and prove Proposition \ref{MainThm} by the phase-plane analysis. 
In Section 3, we prove the asymptotic behavior of the peaked solitary waves near $\xi=0$  for fixed $\kappa \ge 0$ (in subsection 3.1) and the cold ion limit ($\kappa \to 0$) of the peaked solitary waves (in subsection 3.2).
Section 4 is devoted to the numerical simulation results, which give evidence for the blow-up phenomena different from the Burgers-type singularity formation.
In Section 5, we prove an auxiliary lemma about the traveling wave speeds $c_\kappa$, $\kappa \ge 0$.

\section{Existence of peaked solitary waves}
In this section, we carry out a phase plane analysis. To this end, we first reduce \eqref{TravelEq} to a first-order ODE system for $(\rho,-\phi')$.
\subsection{Reduction to a first-order ODE system}\label{reduction}
Suppose that $(\rho,v,\phi)(\xi)$ is a smooth solution to \eqref{TravelEq}  satisfying $\rho>0$ for $\xi\in (-\infty,0)$ and $(\rho,v,\phi)\to (1,0,0)$ as $\xi \to -\infty$. We integrate \eqref{TravelEq1}--\eqref{TravelEq2} from $-\infty$ to $\xi$. Then, we have 
\begin{subequations}\label{TravelEqB}
\begin{align}[left = \empheqlbrace\,]
& v=c\left( 1-\frac{1}{\rho} \right), \label{TravelEqB1} \\ 
& \phi = H(\rho):= \frac{c^2}{2}\left(1 - \frac{1}{\rho^2} \right)  - \kappa\ln \rho.  \label{Def_H}
\end{align}
\end{subequations}
By taking the derivative of \eqref{Def_H}, we obtain the reduced ODE system 
\begin{subequations}\label{ODE_n_E}
\begin{align}[left = \empheqlbrace\,]
& -h(\rho) \rho' = E, \label{ODE_n_E1} \\
&   E' = \rho- e^{H(\rho)},\label{ODE_n_E2}
\end{align}
\end{subequations}
where $E:=-\phi'$ and 
\begin{equation}\label{Def_h}
h(\rho):= \frac{dH(\rho)}{d\rho} = \frac{c^2}{\rho^3} - \frac{\kappa}{\rho}.
\end{equation}
The far-field condition is given by
\begin{equation}\label{bdCon n E}
\rho \to 1, \quad  E \to 0 \quad \text{as}\quad  \xi \to -\infty.
\end{equation}  
The solution to \eqref{TravelEq} is obtained from the solution to \eqref{ODE_n_E} via the relations \eqref{TravelEqB}.

Multiplying \eqref{ODE_n_E2} by $E$, and then using \eqref{ODE_n_E1} and \eqref{Def_h}, we get
\begin{equation}\label{Aux 4} 
\frac{1}{2}\left( E^2\right)' = -\rho h(\rho) \rho' + e^{H(\rho)}h(\rho) \rho'  = g(\rho)',
\end{equation}
where  $g(\rho):= c^2/\rho + \kappa \rho + e^{H(\rho)}$. Integrating \eqref{Aux 4} in $\xi$, we obtain a first integral $\Psi(\rho,E)$, a conserved quantity along the solution to \eqref{ODE_n_E}, and from \eqref{bdCon n E}, we have 
\begin{equation}\label{1st Int}
\Psi(\rho,E) = -E^2/2 + g(\rho) = g(1).
\end{equation}  

Using Lemma \ref{z_K est}, a direct calculation yields the following lemma:
\begin{lemma}\label{Lemma1}
\begin{enumerate}
\item When $\kappa=0$, $c=c_0>1$ satisfies \eqref{Eq z0 Cold-1} if and only if $\lim_{\rho \to +\infty} g(\rho) = g(1)$ and $c>0$. 
\item When $\kappa >0$, $c=c_\kappa>\sqrt{1+\kappa}$ satisfies \eqref{Aux3 lem-1} if and only if 
\begin{equation}\label{Cond g}
g(1) = g(c/\sqrt{\kappa}) \quad \textrm{and}  \quad c >  \sqrt{\kappa}.
\end{equation}
\end{enumerate}
\end{lemma}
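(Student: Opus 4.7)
Both equivalences reduce to direct algebraic substitution into the explicit formulas $g(\rho) = c^2/\rho + \kappa \rho + e^{H(\rho)}$ and $H(\rho) = (c^2/2)(1-1/\rho^2) - \kappa\ln\rho$. Since $H(1) = 0$, one gets $g(1) = c^2 + \kappa + 1$ in both cases, which serves as the fixed reference value on the right-hand side of each putative identity.

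For part (1), I would simply pass to the limit in the pressureless case: as $\rho \to \infty$ the term $c^2/\rho$ vanishes and $H(\rho) \to c^2/2$, so $\lim_{\rho \to \infty} g(\rho) = e^{c^2/2}$. Setting this equal to $g(1) = c^2 + 1$ yields exactly $e^{c^2/2} = c^2 + 1$, which is \eqref{Eq z0 Cold-1} with $z = c$. Both directions of the equivalence are immediate, and the constraint $c > 0$ picks out the unique positive root $c_0 > 1$ guaranteed by Lemma \ref{z_K est}.

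For part (2), I would evaluate $g$ at the candidate endpoint $\rho = c/\sqrt{\kappa}$. A short computation gives $c^2/\rho + \kappa \rho = 2c\sqrt{\kappa}$ and
\[
H(c/\sqrt{\kappa}) = \tfrac{1}{2}(c^2 - \kappa) - \kappa\ln(c/\sqrt{\kappa}),
\]
so that $e^{H(c/\sqrt{\kappa})} = e^{(c^2-\kappa)/2}\,\kappa^{\kappa/2}\,c^{-\kappa}$. Using the identity $c^2 + \kappa - 2c\sqrt{\kappa} = (c - \sqrt{\kappa})^2$ to simplify $g(1) - 2c\sqrt{\kappa} = (c-\sqrt{\kappa})^2 + 1$, the equation $g(c/\sqrt{\kappa}) = g(1)$ becomes, after multiplication by $c^\kappa$, exactly \eqref{Aux3 lem-1} with $z = c$. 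The auxiliary condition $c > \sqrt{\kappa}$ is automatic under $c > \sqrt{1+\kappa}$, and Lemma \ref{z_K est} is invoked only for the uniqueness of the root on $(\sqrt{\kappa}, \infty)$.

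I do not foresee any genuine obstacle: the argument is a chain of substitutions in elementary functions. The only care needed is in tracking the exponents that arise when exponentiating $H(c/\sqrt{\kappa})$ and in spotting the completion of the square $(c - \sqrt{\kappa})^2 + 1$ that brings the isothermal identity into the precise form \eqref{Aux3 lem-1}.
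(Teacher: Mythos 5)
Your computation is correct and is exactly the ``direct calculation'' the paper alludes to (the paper gives no written proof of this lemma beyond citing Lemma \ref{z_K est}): evaluating $g(1)=c^2+\kappa+1$, passing to the limit $\rho\to\infty$ for $\kappa=0$, substituting $\rho=c/\sqrt{\kappa}$ for $\kappa>0$, and completing the square to recover \eqref{Aux3 lem-1}, with Lemma \ref{z_K est} supplying the uniqueness of the root needed for the reverse implications. No gaps.
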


\subsection{Stationary points}\label{Stationary}

We claim that   the system \eqref{ODE_n_E} has only two stationary points $(\rho,E)=(1,0)$ and $(\rho,E)=(\hat{\rho},0)$, where $\hat{\rho}$ is a solution to 
\begin{equation}\label{Aux 9}
\rho  = e^{H(\rho )}.
\end{equation}
Indeed, for $\rho>0$, \eqref{Aux 9} holds if and only if  $l(\rho):=\ln \rho - H(\rho)=0$. Since $dl/d\rho = (1+\kappa)/\rho - c^2/\rho^3$, we see that  $l(\rho)$ strictly decreases on $(0,\rho_1)$ and strictly increases on $(\rho_1,\infty)$, where
\begin{equation*}\label{Def n_1}
1 < \rho_1: = \frac{c}{\sqrt{1+\kappa}}, \quad (\kappa \geq 0).
\end{equation*}
Here, $1 < \rho_1$ holds true since $c=c_\kappa > \sqrt{1+\kappa}$ for $\kappa\geq 0$. 

On the other hand, from the facts that $l(1) = 0$ and $\lim_{\rho \to \infty}l(\rho) = \infty$, it follows that $l(\rho)$ has only two zeros $\rho=1$ and $\rho=\hat{\rho}$, and it holds that 
\begin{equation}\label{Aux 10}
1 < \frac{c}{\sqrt{1+\kappa}} < \hat{\rho}, \quad   (\kappa \geq 0).
\end{equation}

In what follows, we record a few facts for a later purpose. The above analysis yields that for $\kappa \geq 0$, 
\begin{equation}\label{Aux 6}
\left\{
\begin{array}{l l}
\rho  <  e^{H(\rho)} \quad \text{for} \; \rho \in (1,\hat{\rho}), \\ 
\rho  >  e^{H(\rho)} \quad \text{for} \; \rho \in (\hat{\rho},\infty).
\end{array} 
\right.
\end{equation}
From the definition of $g$ (see \eqref{Aux 4}), we have
\begin{equation}\label{gPrime}
\begin{split}
\frac{dg}{d\rho}   = -h(\rho)\left(\rho-  e^{H(\rho)}\right).
\end{split}
\end{equation}
From \eqref{Def_h}, we see that when $\kappa=0$, 
\begin{equation}\label{sign h cold}
h(\rho)>0 \;\; \text{for} \;\; \rho>0,
\end{equation}
and when $\kappa>0$, 
\begin{equation}\label{sign h}
h(\rho)
\left\{
\begin{array}{l l}
=0 \;\; \text{for} \;\; \rho=c/\sqrt{\kappa}, \\ 
<0 \;\; \text{for} \;\; \rho>c/\sqrt{\kappa}, \\
>0 \;\; \text{for} \;\; 0<\rho<c/\sqrt{\kappa}.
\end{array}
\right.
\end{equation}

\begin{lemma}
	\label{Lemma2}
For $\hat{\rho}$ satisfying \eqref{Aux 9}, the following holds:
\begin{enumerate}
\item When $\kappa=0$,  for $c=c_0$, $g(\rho)$ strictly increases on $(1,\hat{\rho})$ and strictly decreases on $(\hat{\rho},\infty)$.
\item When $\kappa >0$,  for $c=c_\kappa$, it holds that 
\begin{equation}\label{hat k}
1< \hat{\rho} < c_\kappa / \sqrt{\kappa}.
\end{equation}
Moreover, $g(\rho)$ strictly increases on $(1,\hat{\rho})$ and strictly decreases on $(\hat{\rho},c/\sqrt{\kappa})$.
\item For $c=c_\kappa$ with $\kappa \geq 0$,  $(\rho,E)=(1,0)$ is a saddle and $(\rho,E) = (\hat{\rho},0)$ is a center.
\end{enumerate}
\end{lemma}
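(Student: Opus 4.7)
The plan is to read off the monotonicity of $g$ from the factorization \eqref{gPrime} together with the sign data in \eqref{Aux 6}, \eqref{sign h cold} and \eqref{sign h}, and then to classify the two equilibria by examining the Hessian of the first integral $\Psi(\rho,E)=-E^{2}/2+g(\rho)$ from \eqref{1st Int} at its critical points.

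For part (1), since $\kappa=0$ gives $h(\rho)>0$ for all $\rho>0$ by \eqref{sign h cold}, the sign of $dg/d\rho$ is just the opposite of the sign of $\rho-e^{H(\rho)}$, which is dictated by \eqref{Aux 6}. This directly yields $g'>0$ on $(1,\hat{\rho})$ and $g'<0$ on $(\hat{\rho},\infty)$. For part (2), the decisive point is \eqref{hat k}, which I would prove by contradiction. Suppose $\hat{\rho}\geq c_\kappa/\sqrt{\kappa}$. Because $c_\kappa/\sqrt{\kappa}>c_\kappa/\sqrt{1+\kappa}>1$, the interval $(1,c_\kappa/\sqrt{\kappa})$ is nonempty and lies inside $(1,\hat{\rho}]$; on it both $h>0$ by \eqref{sign h} and $\rho-e^{H(\rho)}<0$ by \eqref{Aux 6}, so \eqref{gPrime} gives $g'>0$ strictly on $(1,c_\kappa/\sqrt{\kappa})$. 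This forces $g(c_\kappa/\sqrt{\kappa})>g(1)$, contradicting \eqref{Cond g} from Lemma \ref{Lemma1}(2). Hence $\hat{\rho}<c_\kappa/\sqrt{\kappa}$. With this in hand, the sign analysis of \eqref{gPrime} on $(1,\hat{\rho})$ and on $(\hat{\rho},c_\kappa/\sqrt{\kappa})$ — where $h$ remains positive — gives the stated monotonicity.

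For part (3), I would differentiate \eqref{gPrime} at the two zeros of $\rho-e^{H(\rho)}$. At $\rho=1$, the zero comes from the second factor, so
\[
g''(1)=-h(1)\bigl(1-h(1)\bigr)=(c_\kappa^{2}-\kappa)(c_\kappa^{2}-\kappa-1),
\]
which is positive since $c_\kappa^{2}>1+\kappa$. At $\rho=\hat{\rho}$, using $e^{H(\hat{\rho})}=\hat{\rho}$,
\[
g''(\hat{\rho})=-h(\hat{\rho})\bigl(1-\hat{\rho}\,h(\hat{\rho})\bigr)=-h(\hat{\rho})\Bigl(1+\kappa-\frac{c_\kappa^{2}}{\hat{\rho}^{2}}\Bigr),
\]
which is negative: $h(\hat{\rho})>0$ by the bounds on $\hat{\rho}$ (parts (1)--(2)), while $\hat{\rho}>c_\kappa/\sqrt{1+\kappa}$ from \eqref{Aux 10} gives $c_\kappa^{2}/\hat{\rho}^{2}<1+\kappa$. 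Consequently $(\hat{\rho},0)$ is a strict local maximum of $\Psi$ (level sets are closed curves nearby, i.e.\ a center for the Hamiltonian flow \eqref{ODE_n_E}), while $(1,0)$ is a saddle point of $\Psi$, whose level sets form two hyperbolic branches, giving a saddle for the flow.

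The main obstacle is the strict inequality \eqref{hat k}; once it is established, the rest of the lemma is an exercise in sign chasing and Hessian computation. The contradiction step relies crucially on linking the defining equation \eqref{Aux3 lem-1} for $c_\kappa$ to the closed-orbit condition \eqref{Cond g} via Lemma \ref{Lemma1}, and on the fact that \eqref{Aux 6} holds on the full range $(1,\infty)$ rather than only where $h$ is positive.
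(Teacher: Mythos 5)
Your parts (1) and (2) follow the paper's proof essentially verbatim: the same sign-chasing via \eqref{gPrime}, \eqref{Aux 6}, \eqref{sign h cold}, \eqref{sign h}, and the same contradiction argument for \eqref{hat k} against the closed-orbit condition \eqref{Cond g}. Part (3) is where you genuinely diverge, and your route is correct. The paper linearizes the ODE system \eqref{ODE_n_E} directly, computing the Jacobian $J$ and finding $\lambda^2=(c^2-(1+\kappa))/(c^2-\kappa)>0$ at $(1,0)$ and purely imaginary eigenvalues at $(\hat{\rho},0)$ via \eqref{AuxCalcul1}; you instead classify the critical points of the first integral $\Psi=-E^2/2+g(\rho)$ through the second derivative of $g$. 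Your computations check out: $g''(1)=-h(1)(1-h(1))=(c_\kappa^2-\kappa)(c_\kappa^2-\kappa-1)>0$ and $g''(\hat{\rho})=-h(\hat{\rho})(1+\kappa-c_\kappa^2/\hat{\rho}^2)<0$ by \eqref{Aux 10} and \eqref{hat k}, so the Hessian $\mathrm{diag}(g''(\rho),-1)$ is indefinite at $(1,0)$ and negative definite at $(\hat{\rho},0)$. The one point you should make explicit is that \eqref{ODE_n_E} is not literally the Hamiltonian flow of $\Psi$ but that flow rescaled by the positive factor $1/h(\rho)$; since $h(1)=c^2-\kappa>0$ and $h(\hat{\rho})>0$, the orbits coincide with the level sets of $\Psi$ near both equilibria and the Jacobian at each equilibrium is just the Hamiltonian one times $1/h>0$, so the classification transfers. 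With that caveat your approach actually buys something: "purely imaginary eigenvalues" alone distinguishes only center-or-focus for a nonlinear system, and the paper's phrasing implicitly leans on the first integral to exclude a focus, whereas your argument (nondegenerate local maximum of a conserved quantity forces closed nearby orbits) makes that step explicit.
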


\begin{proof}
\textit{(1)} It directly follows from  \eqref{Aux 6}, \eqref{gPrime}, and \eqref{sign h cold}.

\textit{(2)} We first show \eqref{hat k}.  $1 < \hat{\rho}$ holds by \eqref{Aux 10}. Suppose that $\hat{\rho} \geq c_\kappa /\sqrt{\kappa}$. Then, together with \eqref{sign h} and \eqref{Aux 6}, it follows from \eqref{gPrime} that $g(\rho)$ strictly increases on $(1,c_\kappa/\sqrt{\kappa})$, which contradicts \eqref{Cond g}. Hence, \eqref{hat k} holds true. Using \eqref{hat k}, together with \eqref{Aux 6}, \eqref{gPrime}, and \eqref{sign h}, we see that $g(\rho)$ strictly increases on $(1,\hat{\rho})$ and strictly decreases on $(\hat{\rho},c/\sqrt{\kappa})$. 

\textit{(3)}  The Jacobian matrix of the system \eqref{ODE_n_E} is  
\begin{equation*}\label{jacobian}
J(\rho,E):=\left( 
\begin{array}{cc}
\frac{E}{[h(\rho)]^2}\frac{dh(\rho)}{d\rho} & \frac{-1}{h(\rho)} \\
   1-h(\rho)e^{H(\rho)}   & 0
\end{array}
\right).
\end{equation*}
Since $c_\kappa>\sqrt{1+\kappa}$ for $\kappa \geq 0$, the eigenvalues $\lambda$ of $J(1,0)$ satisfy
\begin{equation*}\label{0901}
\lambda^2= -\frac{1 - h(1)e^{H(1)}}{  h(1)} =  \frac{ c^2 - (1+\kappa)}{ c^2-\kappa} > 0.
\end{equation*}
Hence, $(\rho,E)=(1,0)$ is a saddle point for $\kappa \geq 0$. 

On the other hand, since $\hat{\rho}$ satisfies \eqref{Aux 9}, we obtain from \eqref{Aux 10} that
\begin{equation}\label{AuxCalcul1}
\begin{split}
1 - h(\hat{\rho})e^{H(\hat{\rho})}
& = 1 + \kappa - \left( \frac{c}{\hat{\rho}} \right)^2 >0.
\end{split}
\end{equation}
From \eqref{sign h cold}, \eqref{sign h}, \eqref{hat k}, and \eqref{AuxCalcul1}, it is straightforward to check that   $J(\hat{\rho},0)$ has two purely imaginary eigenvalues, and thus $(\hat{\rho},0)$ is a center for $\kappa \geq 0$.  We finish the proof.
\end{proof}

\subsection{Proof of Proposition \ref{MainThm}}
From \eqref{ODE_n_E} and the analysis in subsection \ref{Stationary}, one can check that when $\kappa=0$,
\begin{equation}\label{Direction}
\left\{
\begin{array}{l l}
\rho' > 0  \text{ and }  E'<0 \quad \text{for } \rho \in (1,\hat{\rho}) \text{ and } E<0, \\ 
\rho' > 0  \text{ and }  E' > 0  \quad  \text{for } \rho \in (\hat{\rho},\infty) \text{ and } E<0, \\
\rho' < 0  \text{ and }  E' > 0  \quad  \text{for } \rho \in (\hat{\rho},\infty) \text{ and } E > 0, \\
\rho' < 0  \text{ and }  E' < 0  \quad \text{for } \rho \in (1,\hat{\rho}) \text{ and } E > 0.
\end{array} 
\right.
\end{equation}
When $\kappa>0$, \eqref{Direction} with $(\hat{\rho},\infty)$ replaced by $(\hat{\rho},c_\kappa/\sqrt{\kappa})$ hold. See Figure \ref{FigNumeric}.

\begin{figure}[h]
\begin{tabular}{cc}
\resizebox{60mm}{!}{\includegraphics{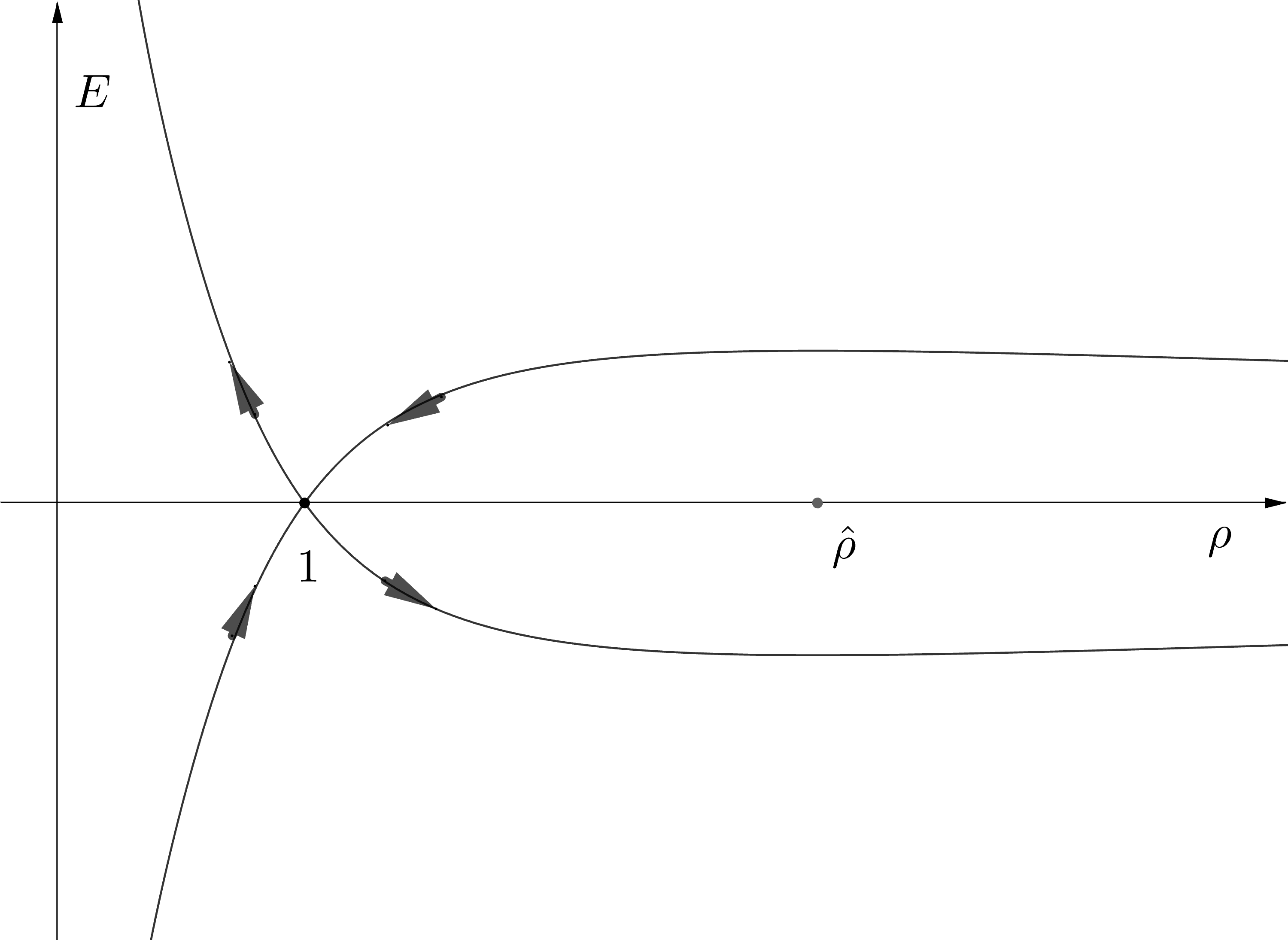}}  & \resizebox{60mm}{!}{\includegraphics{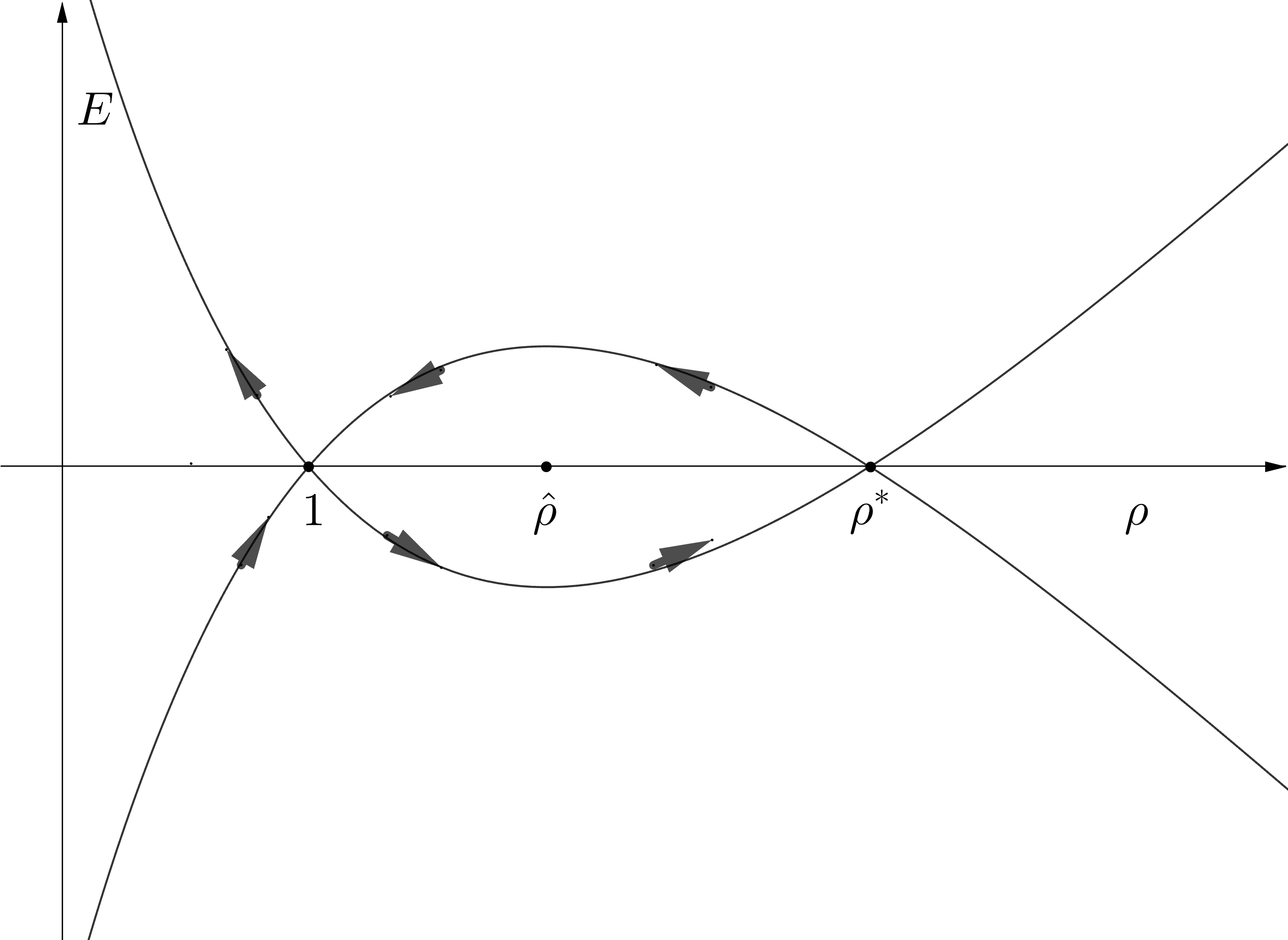}} \\
(a) $\kappa=0$  & (b) $\kappa > 0$
\end{tabular}
\caption{Trajectory curves}
\label{FigNumeric}
\end{figure}

Now we are ready to prove Proposition \ref{MainThm}.

\begin{proof}[Proof of Proposition \ref{MainThm}]

We divide the proof into three steps.

\textit{Step 1}: We consider the region $E<0$.  By the stable manifold theorem (see \cite{P}, for example), there is a $\delta>0$ such that  a smooth solution $(\rho^u,E^u)$ to \eqref{ODE_n_E} with $\rho^u>1$ and $E^u<0$ exists and $|(\rho^u-1,E^u)| \leq \delta$ for all $\xi \leq  0$. In particular, $(\rho^u,E^u) \to (1, 0)$ as $\xi \to -\infty$ exponentially fast.

We solve the initial value problem \eqref{ODE_n_E} with $(\rho,E)(0)=(\rho^u,E^u)(0)$. From \eqref{1st Int}, the (uniquely extended) solution $(\rho^u,E^u)$ satisfies 
\begin{equation}\label{09}
E^u = - \sqrt{2 (g(\rho^u)-g(1))} \quad \text{as long as} \quad E^u<0.
\end{equation} 

We show that there is a point $\xi^\ast\in(0,\infty)$ such that $\textstyle\lim_{\xi \nearrow \xi^\ast}E^u(\xi) = 0$. If not, there are two possible cases: (i) $\textstyle\lim_{\xi \nearrow \xi^M} E^u < 0$, where $\xi^M \in (0,\infty)$ is the maximal existence time, and (ii) $\textstyle\lim_{\xi \to +\infty} E^u \leq 0$. The case (i) is ruled out thanks to the local existence, \cite{P}. We claim that the case (ii) cannot occur.

Integrating \eqref{ODE_n_E1} on $[0,\xi]$, and then using \eqref{09},  we have
\begin{equation}\label{333}
\xi  = \int_{0}^\xi \frac{h(\rho^u(\tilde{\xi}))}{\sqrt{2 (g(\rho^u(\tilde{\xi})-g(1))}} \frac{d\rho^u}{d\tilde{\xi}}\,d{\tilde{\xi}} = \int_{\rho^u(0)}^{\rho^u(\xi)} \frac{h(\rho)}{\sqrt{2 (g(\rho)-g(1))} }\,d\rho.
\end{equation}
Here, the change of variable is possible since $(\rho^u)'>0$ by \eqref{Direction}.

We first consider the case $\kappa=0$. By expanding $g(\rho)$ at $\rho=\infty$, we obtain $g(\rho) -g(1) = c_0^2 \rho^{-1} + O(\rho^{-2})$ as $\rho \to +\infty$.  Hence, from \eqref{333},
\[
\xi   \leq \int_{\rho^u(0)}^\infty \frac{h(\rho)}{\sqrt{2 (g(\rho)-g(1))} }\,d\rho < \infty,
\]
which implies that as long as $(\rho^u)'>0$, or equivalently, $E^u<0$ by \eqref{Direction}, the case (ii) cannot happen. Hence, we conclude that there is a point $\xi^\ast$ such that  $\textstyle\lim_{\xi \nearrow \xi^\ast} E^u   = 0$. Moreover, from \eqref{09}, Lemma \ref{Lemma1} and Lemma \ref{Lemma2}, we also have that  $\textstyle\lim_{\xi \nearrow \xi^\ast} \rho^u   = +\infty$

To consider the case $\kappa>0$, we expand $h$ and $g$, and we obtain that  as $\rho \to \rho^\ast=c_\kappa/\sqrt{\kappa}$,
\begin{equation}\label{099}
\left\{
\begin{array}{l l}
h(\rho) =    \left( -\frac{3c_\kappa^2}{(\rho^\ast)^4} + \frac{\kappa}{(\rho^\ast)^2}  \right)(\rho-\rho^\ast) + O(|\rho-\rho^\ast|^2),  \\
g(\rho)  - g(1)  =   \frac{1}{2}\left( \frac{2c_\kappa^2}{(\rho^\ast)^3} +  e^{H(\rho^\ast)} \frac{dh}{d\rho}(\rho^\ast) \right)(\rho-\rho^\ast)^2 + O(|\rho-\rho^\ast|^3),
\end{array}
\right.
\end{equation} 
 where we have used the fact that $g(\rho^\ast)=g(1)$ and $h(\rho^\ast)=0$ (see Lemma \ref{Lemma1} and \eqref{sign h}). It is straightforward to check that the coefficients of the leading order terms of the right-hand sides of \eqref{099} are nonzero.  In a similar fashion to the case $\kappa=0$, one can show that there is a point $\xi^\ast\in (0,\infty)$ such that $\lim_{\xi \nearrow \xi^\ast} E^u(\xi)   = 0$ and $\lim_{\xi \nearrow \xi^\ast} \rho^u(\xi) = \rho^\ast=c_\kappa\sqrt{\kappa}$.
 
\textit{Step 2:} Now we consider the region $E>0$.  By the stable manifold theorem, there is a smooth solution $(\rho^s,E^s)$ to \eqref{ODE_n_E} with $\rho^s>1$ and $E^s>0$, and $(\rho^s,E^s)$ tends to $(1,0)$ as $\xi \to +\infty$ exponentially fast. By the same argument as the previous step, one can show that there is a point $\xi^{\ast\ast}$ such that $\textstyle \lim_{\xi \searrow \xi^{\ast\ast}} E^s(\xi) =0$ and $\textstyle \lim_{\xi \searrow \xi^{\ast\ast}} \rho^s(\xi) = \rho^\ast$. We omit the details.

\textit{Step 3:} We set $\xi^\ast = 0 = \xi^{\ast\ast}$ and define
\begin{equation*}
(\tilde{\rho},\tilde{E}):=
\left\{
\begin{array}{l l}
(\rho^u,E^u)(\xi) \quad \text{for } \xi<0, \\ 
(\rho^s,E^s)(\xi) \quad \text{for } \xi>0.
\end{array} 
\right.
\end{equation*} 
Then, $(\tilde{\rho},\tilde{E})$ satisfies \eqref{ODE_n_E} for $\xi \neq  0$ and \eqref{bdCon n E}. At $\xi = 0$, it is not differentiable. From the reduction discussed in subsection \ref{reduction}, we obtain  $(\rho,v,\phi)$ satisfying \eqref{TravelEq} at $\xi\neq 0$. Using the fact that the first integral $\Psi(\rho,E)$ defined in \eqref{1st Int} is symmetric about $\rho$ axis, one can check that $(\rho,v,\phi)$ satisfies \eqref{bdCon+-inf}--\eqref{Symmetric}. It is easy to check that \eqref{mono}--\eqref{peak} and \eqref{Maxk0}--\eqref{Maxk}. We complete the proof.

\end{proof}

\section{Asymptotic behaviors of the peaked solitary waves}

\subsection{Asymptotic behaviors at the peaks}  We investigate  asymptotic behaviors of the peaked solitary waves at the peaks for each fixed $\kappa \geq 0$.
\begin{proof}[Proof of Theorem \ref{Thm2}]
We first consider the case $\kappa=0$.   In this case,  $H^{-1}(\phi)$ is explicit:
\begin{equation}\label{11}
\rho = H^{-1}(\phi)=\frac{c_0}{\sqrt{c_0^2-2\phi}}.
\end{equation}
Hence, from \eqref{TravelEq3}, we obtain
\begin{equation}\label{111_0}
\phi'' = e^\phi - \frac{c_0}{\sqrt{c_0^2-2\phi}}.
\end{equation}
Multiplying \eqref{111_0} by $\phi'$, and then integrating it from $\xi$ to  $+\infty$, we have 
\begin{equation*}
\frac{(\phi')^2}{2} = e^\phi + c_0\sqrt{c_0^2-2\phi} - (1+c_0^2) \quad \text{for } \xi \in (0,+\infty).
\end{equation*}
Let $\Phi(\xi):=\phi^\ast-\phi(\xi)=c_0^2/2- \phi$. Since $\lim_{\xi \searrow 0}\Phi = 0 $, $\Phi>0$ and $\Phi' > 0$  for $\xi>0$, we get
\begin{equation}\label{111_3}
\begin{split}
\Phi'
& =\sqrt{2} \left( \exp\left(\frac{c_0^2}{2} - \Phi \right) + c_0\sqrt{2\Phi} - (1+c_0^2) \right)^{1/2}\\
& = \sqrt{2} \left(  (1+c_0^2)(e^{-\Phi} - 1) +c_0\sqrt{2\Phi} \right)^{1/2} \\
& = (2c_0)^{1/2}(2\Phi)^{1/4}( 1 + o(1) )  
\end{split}
\end{equation}
as $\xi \searrow 0$. Here, we have used \eqref{Eq z0 Cold-1} in the second equality.  Hence, for any small $\veps>0$, there exists $\xi_\veps<0$ such that for all $\xi\in(0,\xi_\veps)$,
\begin{equation}\label{111}
(2c_0)^{1/2}2^{1/4}(1-\veps) \leq \frac{4}{3} (\Phi^{3/4})' \leq  (2c_0)^{1/2}2^{1/4}(1+\veps).
\end{equation}
Integrating \eqref{111} from $0$ to $\xi$, and using the symmetry of $\Phi$, we have 
\begin{equation}\label{111_1}
\lim_{\xi \to 0}\frac{\Phi}{|\xi|^{4/3}} = \frac{1}{2}\left(\frac{3\sqrt{c_0}}{\sqrt{2}}\right)^{4/3}=:A_0.
\end{equation}
Combining \eqref{111_3} and \eqref{111_1}, we obtain 
\begin{equation*}\label{111_4}
\lim_{\xi \to 0}\frac{|\phi'|}{|\xi|^{1/3}} = (2c_0)^{1/2}2^{1/4}A_0^{1/4} = \frac{4}{3}A_0.
\end{equation*}
From \eqref{11} and \eqref{111_1}, it follows that
\begin{equation}\label{111_2}
\lim_{\xi \to 0}\rho|\xi|^{2/3} = \frac{c_0}{\sqrt{2A_0}} = \frac{4}{9}A_0.
\end{equation}
From \eqref{TravelEq3} and \eqref{111_2}, we get 
\begin{equation*}\label{111_5}
\lim_{\xi \to 0}-\phi''|\xi|^{2/3} = \lim_{\xi \to 0}(\rho-e^\phi)|\xi|^{2/3}= \frac{4}{9}A_0.
\end{equation*}
From \eqref{TravelEqB1} and \eqref{111_2}, we have
\begin{equation*}
\lim_{\xi \to 0}\frac{v^\ast - v}{|\xi|^{2/3}} = \lim_{\xi \to 0}\frac{c_0}{\rho|\xi|^{2/3}} = \sqrt{2A_0}.
\end{equation*}
This finishes the proof of \eqref{Thm2_1}.
 
Now, we consider the case $\kappa>0$. \eqref{Thm2_3} directly follows by using L'Hôpital's rule and  \eqref{TravelEq3} together with the fact that $\rho^\ast < \infty$ when $\kappa>0$.  
Using \eqref{Def_H} and expanding $H(\rho)$, we obtain
\[
\phi^\ast - \phi = -\frac{1}{2}\frac{dh}{d\rho}(\rho^\ast)(\rho-\rho^\ast)^2 \left( 1 + o(1) \right)
\]
as $\xi \to 0$.  Hence,
\begin{equation}\label{Thm2_5}
\lim_{\xi \to  0}\frac{\phi^\ast - \phi}{\xi^2} = -\frac{1}{2}\frac{dh}{d\rho}(\rho^\ast)\lim_{\xi \to 0}\frac{(\rho^\ast - \rho)^2}{\xi^2}.
\end{equation}
Using \eqref{TravelEqB1}, we get 
\begin{equation}\label{222_5}
\lim_{\xi \to  0}\frac{v^\ast - v}{|\xi|} = \frac{c_\kappa}{(\rho^\ast)^2} \lim_{\xi \to  0}\frac{\rho^\ast - \rho}{|\xi|}.
\end{equation}
Combining \eqref{Thm2_3}, \eqref{Thm2_5}, and \eqref{222_5}, we obtain \eqref{Thm2_4}. We finish the proof. 
\end{proof}

\subsection{Cold ion limit of peaked solitary waves}
In this subsection, we prove Theorem \ref{thm3}. 

Throughout this subsection, $\kappa_0$ appearing in each lemma and proposition denotes various positive and \textit{small} constants, and $C, N, \ldots$ (sometimes with indices) denote different constants depending only on the prescribed quantities, such as $\kappa_0, \cdots$.
This dependence is indicated in the parentheses: $N = N(\kappa_0, \ldots)$, $C = C(\kappa_0, \cdots)$.
For given two functions (or quantities) $f$ and $g$, we say $f \lesssim g$ if there is a constant $C$, independent of $\xi$ and $\kappa$, satisfying $f \le C g$.
We also denote $f \simeq g$ if $f \lesssim g$ and $g \lesssim f$. 

In what follows, we set  
\[
H_\kappa(\rho) := \frac{c_\kappa^2}{2}\left(1-\frac{1}{\rho^2} \right) - \kappa \ln\rho
\]
(see \eqref{Def_H}){, which is invertible on $[1,\rho_\ast^\kappa]$ from \eqref{sign h}.} Since $c_\kappa<c_0$ by Lemma \ref{z_K est}, we see that   $H_\kappa(\rho)<H_0(\rho)$ on the interval $(1,\infty)$, and thus  $H_\kappa^{-1}(\phi)>H_0^{-1}(\phi)$ on $(0,\phi_\kappa^\ast)$ (recall $\phi_\kappa^\ast   < \phi_0^\ast$). 
Also, as $\kappa \to 0$, we have 
\[
c_0^2/2 -o(1) = H_\kappa(\rho_\kappa^\ast) = \phi_\kappa^\ast  < c_0^2/2 = \phi_0^\ast 
\]
since $\rho_\kappa^\ast=c_\kappa/\sqrt{\kappa}$ and  $c_\kappa>\sqrt{\kappa}$.

Note that $\phi_\kappa$ with $\kappa \geq  0$ satisfies a second-order ODE with a nonlinear term $H_\kappa^{-1}$ in \eqref{12_0809}.
Observe that $H_\kappa^{-1}(\phi_\kappa)$ dominates $e^{\phi_\kappa}$ near $\xi=0$ due to the facts that $\rho_\kappa^\ast = H_\kappa^{-1}(\phi_\kappa^*) \to \infty$ as $\kappa\to 0$ and $\phi_\kappa(\xi)$ is close to $\phi_\kappa^*$ near $\xi=0$.
In order to analyze the shape of $\phi_\kappa$ in such a region, we first investigate the properties of $H_\kappa^{-1}$ in Lemmas \ref{H_K conv} and \ref{lem3.3}.
We refer to the auxiliary figure provided in Figure \ref{FigNume0} for Lemmas \ref{H_K conv} and \ref{lem3.3}.
\begin{lemma}
	\label{H_K conv}
For any $\veps \in (0, c_0^2/2)$,  $H_\kappa^{-1} \to H_0^{-1}$ in $C^1([0, c_0^2/2-\veps])$ as $\kappa\to 0$.
\end{lemma}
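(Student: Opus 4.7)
The plan is to combine the explicit structure of $H_\kappa$, the convergence $c_\kappa \to c_0$ (Lemma \ref{z_K est}), and the explicit formula for $H_0^{-1}$ to establish $C^1$ convergence of the inverses on the image side via a standard inverse-function argument.

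First I would pin down a compact $\rho$-domain that captures all relevant preimages. Since $H_0^{-1}(\phi) = c_0/\sqrt{c_0^2 - 2\phi}$, the interval $[0, c_0^2/2-\veps]$ is mapped into $[1, M_0]$ with $M_0 := c_0/\sqrt{2\veps}$. Set $M := M_0 + 1$. Because $c_\kappa \to c_0$, we have $c_\kappa/\sqrt{\kappa} \to \infty$, so $M < c_\kappa/\sqrt{\kappa}$ for all sufficiently small $\kappa$. On $[1,M]$, \eqref{sign h} gives $H_\kappa' = h > 0$, so $H_\kappa$ is a strictly increasing diffeomorphism from $[1,M]$ onto $[0, H_\kappa(M)]$. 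Moreover, from
\[
H_\kappa(\rho) - H_0(\rho) = \tfrac{1}{2}(c_\kappa^2 - c_0^2)(1 - \rho^{-2}) - \kappa \ln \rho, \qquad H_\kappa'(\rho) - H_0'(\rho) = \frac{c_\kappa^2 - c_0^2}{\rho^3} - \frac{\kappa}{\rho},
\]
both $H_\kappa \to H_0$ and $H_\kappa' \to H_0'$ converge uniformly on $[1, M]$ as $\kappa \to 0$.

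Next I would transfer the convergence to the inverses. Because $H_0(M) > c_0^2/2 - \veps$ by the choice of $M$, the uniform convergence gives $H_\kappa(M) > c_0^2/2 - \veps$ for all sufficiently small $\kappa$, so $H_\kappa^{-1}$ is defined on $[0, c_0^2/2 - \veps]$ with values in $[1, M]$. The standard fact that uniform convergence of strictly monotone continuous functions implies uniform convergence of the inverses on a fixed image interval then yields $H_\kappa^{-1} \to H_0^{-1}$ in $C^0([0, c_0^2/2 - \veps])$.

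Finally, I would upgrade to $C^1$ using the identity $(H_\kappa^{-1})'(\phi) = 1/H_\kappa'(H_\kappa^{-1}(\phi))$. Since $c_\kappa \to c_0 > 1$, we have $H_\kappa'(\rho) \geq c_0^2/(2M^3)$ uniformly on $[1, M]$ for all small $\kappa$, preventing $(H_\kappa^{-1})'$ from blowing up. Combining this uniform lower bound with the uniform convergence $H_\kappa' \to H_0'$ on $[1, M]$ and the uniform convergence $H_\kappa^{-1} \to H_0^{-1}$ on $[0, c_0^2/2 - \veps]$ via the triangle inequality delivers $(H_\kappa^{-1})' \to (H_0^{-1})'$ uniformly, completing the $C^1$ convergence. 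The only subtle point is confirming that $H_\kappa^{-1}$ takes values in the fixed compact $[1, M]$ for all small $\kappa$, which is exactly what the choice of $M$ together with monotonicity of $H_\kappa$ on $[1, c_\kappa/\sqrt{\kappa}]$ provides.
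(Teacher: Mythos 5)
Your proposal is correct and follows essentially the same route as the paper: restrict to a fixed compact $\rho$-interval on which $H_\kappa$ is an increasing diffeomorphism, use the uniform convergence $c_\kappa \to c_0$ to get $H_\kappa \to H_0$ and $h_\kappa \to h_0$ there, and then pass to the inverses via $(H_\kappa^{-1})' = 1/h_\kappa(H_\kappa^{-1})$ together with a uniform lower bound on $h_\kappa$. The only cosmetic difference is the $C^0$ step: you invoke the standard fact about uniform convergence of inverses of monotone functions, whereas the paper makes this quantitative by writing $H_0^{-1}(\phi) = H_\kappa^{-1}\bigl(H_\kappa\bigl(H_0^{-1}(\phi)\bigr)\bigr)$ and applying the uniform Lipschitz bound on $H_\kappa^{-1}$.
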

\begin{proof}
We set $I_\veps := [0, c_0^2/2-\veps]$.
First, by \eqref{Def_H} and \eqref{Def_h}, we note that $H_{\kappa}$ is increasing on $\rho \in [1,c_{\kappa}/\sqrt{\kappa}]$.
We easily see that, there are constants $\kappa_0 \ll 1$ and $M_0>1$ such that $H_{\kappa}^{-1}(I_\veps ) \subset [1,M_0]$
for all $\kappa \in [0, \kappa_0)$.
In particular, there exists a constant $C > 0$ independent of $ \kappa$ such that
\[
\frac{dH_{\kappa}^{-1}}{d\phi} = \frac{1}{ h_{\kappa} \left( H_{\kappa}^{-1} \right)}  < C \quad \text{ on }  \quad I_\veps
 \]
 since $h_\kappa = c_\kappa^2/ \rho^{3} - \kappa / \rho$ uniformly converges to $h_0 = c_0^2/\rho^3 > 0$ on $[1, M_0]$ as $\kappa \to 0$.
By the monotonicity $H_{\kappa} < H_{0}$ on $(1, \infty)$, it is clear that $H_{\kappa} \left(  H_0^{-1} \left( \phi \right)   \right) \in I_{\veps }$ for  $\phi \in I_{\veps}$.
Therefore, by the fact that $c_{\kappa} \to c_0$ as $\kappa \to 0$, we have
\[
\begin{split}
\left| H_0^{-1} \left( \phi \right) - H_{\kappa}^{-1} \left( \phi \right) \right| 
& =  \left|  H_{\kappa}^{-1}\left( H_{\kappa} \left(  H_0^{-1} \left( \phi \right)   \right)   \right)  - H_{\kappa}^{-1} \left( \phi \right)  \right| \\
& \le \|(H_{\kappa}^{-1})'\|_{L^\infty(I_{\veps})} |H_{\kappa}(H_0^{-1}(\phi))-  H_0 \left(  H_0^{-1} \left( \phi \right) \right) | \to 0 
\end{split}
\] 
uniformly on $I_\veps$, as $\kappa \to 0$.
By using the above, we also have
\[
\begin{split}
\left| \left( H_{\kappa}^{-1} \right)' \left( \phi \right) - \left( H_0^{-1} \right)' \left( \phi \right) \right| 
&  = \left|  \frac{1}{h_{\kappa} \left( H_{\kappa}^{-1}  \left( \phi \right) \right)}  
 -   \frac{1}{h_{0} \left( H_{0}^{-1}  \left( \phi \right) \right)}  \right| \\
 & \lesssim \left|  h_{0} \left( H_{0}^{-1}  \left( \phi \right) \right) - h_{0} \left( H_{\kappa}^{-1}  \left( \phi \right) \right) \right|  \\
 & \quad  + \left|  h_{0} \left( H_{\kappa}^{-1}  \left( \phi \right) \right) - h_{\kappa} \left( H_{\kappa}^{-1}  \left( \phi \right) \right) \right| \\
 & \to 0
\end{split}
\]
as $\kappa \to 0$ uniformly on $I_\veps$.
The lemma is proved.
\end{proof}

\begin{lemma}
	\label{lem3.3} 
For any fixed constant $M\geq 2$, there exist constants $\kappa_0>0$ and $N(\kappa_0,M)>1$ such that  the following hold: 
for all $\kappa \in(0,\kappa_0)$,
\begin{equation}
	\label{lem3.3eq1-1}
\frac{N^{-1}}{\sqrt{\phi_\kappa^\ast-\phi}}  \le H_\kappa^{-1}(\phi) \le 
\frac{N}{\sqrt{\phi_\kappa^\ast-\phi}}
 \quad \text{if} \quad \phi \in \left(0,H_\kappa \left( \frac{\rho_\kappa^\ast}{M} \right) \right);
\end{equation}
\begin{equation}
	\label{lem3.3eq1}
N^{-1} \rho_\kappa^\ast  \le H_\kappa^{-1}(\phi) \le N\rho_\kappa^\ast \quad \text{if} \quad \phi \in \left(H_\kappa\left( \frac{\rho_\kappa^\ast}{M} \right) ,H_\kappa \left( \rho_\kappa^\ast\right) \right).
\end{equation}
\end{lemma}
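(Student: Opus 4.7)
The plan is to exploit the explicit algebraic structure of $H_\kappa$ by rescaling $\rho = \rho_\kappa^\ast s$ with $s \in (0, 1]$, where $\rho_\kappa^\ast = c_\kappa/\sqrt{\kappa}$ so that $(\rho_\kappa^\ast)^2 = c_\kappa^2/\kappa$. A direct substitution into $H_\kappa$ collapses the $c_\kappa$-dependence inside $H_\kappa(\rho_\kappa^\ast) - H_\kappa(\rho)$ and produces
\[
\phi_\kappa^\ast - \phi \;=\; \frac{\kappa}{2}\!\left( \frac{1}{s^2} - 1 + 2\ln s \right) \;=\; \frac{\kappa}{2 s^2}\,g(s), \qquad g(s) := 1 - s^2 + 2 s^2 \ln s.
\]
Eliminating $\sqrt{\kappa}/s$ between this identity and $H_\kappa^{-1}(\phi) = c_\kappa s/\sqrt{\kappa}$ yields the key closed-form expression
\[
H_\kappa^{-1}(\phi) \;=\; \frac{c_\kappa\,\sqrt{g(s)}}{\sqrt{2(\phi_\kappa^\ast - \phi)}},
\]
on which both bounds will hinge.

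Next I would carry out an elementary study of $g$ on $(0,1]$. One checks that $g(0^+) = 1$, $g(1) = 0$, and $g'(s) = 4 s \ln s < 0$ for $s \in (0, 1)$, so $g$ strictly decreases from $1$ down to $0$. Hence for any fixed $M \ge 2$,
\[
0 \;<\; g(1/M) \;\le\; g(s) \;\le\; 1 \qquad \text{for all } s \in (0, 1/M].
\]
By Lemma \ref{z_K est}, I may also pick $\kappa_0 = \kappa_0(M) > 0$ small enough that $\rho_\kappa^\ast > M$ (so that Case (i) is nonempty) and $c_\kappa \simeq c_0$ uniformly for $\kappa \in (0, \kappa_0)$.

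With these ingredients, Case (i) is immediate. The monotonicity of $H_\kappa$ on $[1, \rho_\kappa^\ast]$ (see \eqref{sign h}) translates the condition $\phi \in (0, H_\kappa(\rho_\kappa^\ast/M))$ into $s \in (1/\rho_\kappa^\ast, 1/M)$, on which $g(s) \in [g(1/M), 1]$; inserting these bounds together with $c_\kappa \simeq c_0$ into the key identity yields \eqref{lem3.3eq1-1} with a single $N = N(\kappa_0, M)$. Case (ii) is even more direct: $\phi \in (H_\kappa(\rho_\kappa^\ast/M), H_\kappa(\rho_\kappa^\ast))$ is equivalent via monotonicity to $s \in (1/M, 1)$, so $H_\kappa^{-1}(\phi) = \rho_\kappa^\ast s \in (\rho_\kappa^\ast/M, \rho_\kappa^\ast)$ and \eqref{lem3.3eq1} holds with $N = M$.

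Because the argument reduces to elementary monotonicity of the single explicit one-variable function $g$, no serious analytic obstacle appears. The only conceptual point is choosing the right normalization $\rho = \rho_\kappa^\ast s$ at the outset: without it, the $\sqrt{\phi_\kappa^\ast - \phi}$-scaling in \eqref{lem3.3eq1-1} is obscured by the nonalgebraic $\kappa \ln \rho$ term in $H_\kappa$, and the uniform-in-$\kappa$ constants in the target bounds become opaque.
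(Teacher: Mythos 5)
Your proof is correct and follows essentially the same route as the paper: both normalize by $\rho_\kappa^\ast$, observe that $\phi_\kappa^\ast-\phi$ is comparable to $c_\kappa^2/(2\rho^2)$ for $\rho\le\rho_\kappa^\ast/M$ with the logarithmic term controlled, and conclude using $\sqrt{\kappa}\,\rho_\kappa^\ast=c_\kappa\to c_0$ together with the bijectivity of $H_\kappa$ on $(1,\rho_\kappa^\ast)$ for the second regime. The only cosmetic difference is that you package the comparison as the exact identity $H_\kappa^{-1}(\phi)=c_\kappa\sqrt{g(s)}/\sqrt{2(\phi_\kappa^\ast-\phi)}$ with $g(s)=1-s^2+2s^2\ln s$ monotone on $(0,1]$, whereas the paper works with two-sided inequalities via the elementary bound $x^2/4-x^2/(2M^2)-\log x>0$ for $x\ge M$.
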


\begin{proof}
Since $c_{\kappa} \to c_0 > 0$, we have  $\rho_\kappa^\ast=c_\kappa/\sqrt{\kappa} \to \infty$ as $\kappa \to 0$. Note that, for $\rho \in (1,\rho_\kappa^\ast)$,
\begin{equation*}
	\label{eq240510_01}
H_\kappa(\rho_\kappa^\ast)-H_\kappa(\rho) =\frac{\kappa (\rho_\kappa^\ast)^2}{2} \left(\frac{1}{\rho^2}-\frac{1}{(\rho_\kappa^\ast)^2}\right) - \kappa\log \left( \frac{\rho_\kappa^\ast}{\rho} \right) \le \frac{\kappa (\rho_\kappa^\ast)^2}{2\rho^2}.
\end{equation*}
Then it holds that for $\rho \in (1,\rho_\kappa^\ast/M]$ (i.e., $\rho_\kappa^\ast/\rho \in [M, \rho_\kappa^\ast)$),
\begin{equation}
	\label{lem3.3eq2}
 \frac{\kappa (\rho_\kappa^\ast)^2}{4\rho^2} 
 \le \frac{\kappa (\rho_\kappa^\ast)^2}{2}\left( \frac{1}{\rho^2}-\frac{1}{M^2\rho^2}\right) -\kappa \log \left( \frac{\rho_\kappa^\ast}{\rho} \right)
   \le H_\kappa(\rho_\kappa^\ast)-H_\kappa(\rho)
    \le \frac{\kappa (\rho_\kappa^\ast)^2}{2\rho^2},
\end{equation} 
where we have used the fact that
\[
\frac{x^2}{4} - \frac{x^2}{2M^2} - \log x > 0 \quad \text{if}\quad  x\geq M
\]
for any $M\geq 2$. Since $H_\kappa$ is strictly increasing on $\rho \in (1,\rho_\kappa^\ast)$ and it maps the interval $\rho \in (1,\rho_\kappa^\ast/M)$ to $\phi =H_{\kappa}(\rho) \in (0,H_\kappa(\rho_\kappa^\ast/M))$, the inequality \eqref{lem3.3eq2} implies
\[
\frac{1}{2} \sqrt{\kappa}\rho_\kappa^\ast (H_\kappa(\rho_\kappa^\ast)-\phi)^{-\frac12}
 \le \rho= H_\kappa^{-1}(\phi)
  \le \frac{\sqrt{\kappa}}{\sqrt{2}} \rho_\kappa^\ast (H_\kappa(\rho_\kappa^\ast)-\phi)^{-\frac12}
\]
for $\phi \in (0,H_\kappa(\rho_\kappa^\ast/M))$.
Since $\sqrt{\kappa} \rho_\kappa^\ast =c_0 + O(\sqrt{\kappa})$ for small $\kappa>0$, we see that there exist $\kappa_0>0$ and $N = N(\kappa_0) > 1$ such that, if $\kappa \in (0, \kappa_0)$, then
\begin{equation}
	\label{lem3.3eq3}
N^{-1} (H_\kappa(\rho_\kappa^\ast)-\phi)^{-\frac12} \le \rho= H_\kappa^{-1}(\phi) \le N (H_\kappa(\rho_\kappa^\ast)-\phi)^{-\frac12}
\end{equation}
for $\phi \in (0,H_\kappa(\rho_\kappa^\ast/M))$.
On the other hand, for $\phi \in (H_\kappa(\rho_\kappa^\ast/M), H_\kappa(\rho_\kappa^\ast))$, we have
\begin{equation}
	\label{lem3.3eq4}
\frac{\rho_\kappa^\ast}{M} \le H_\kappa^{-1}(\phi) \le \rho_\kappa^\ast,
\end{equation}
since $H_\kappa$ is bijective on $(1,\rho_\kappa^\ast)$.
By combining \eqref{lem3.3eq3} and \eqref{lem3.3eq4}, we obtain the result.
\end{proof}

\begin{figure}[h]
\begin{center}
\includegraphics[width=0.6\linewidth]{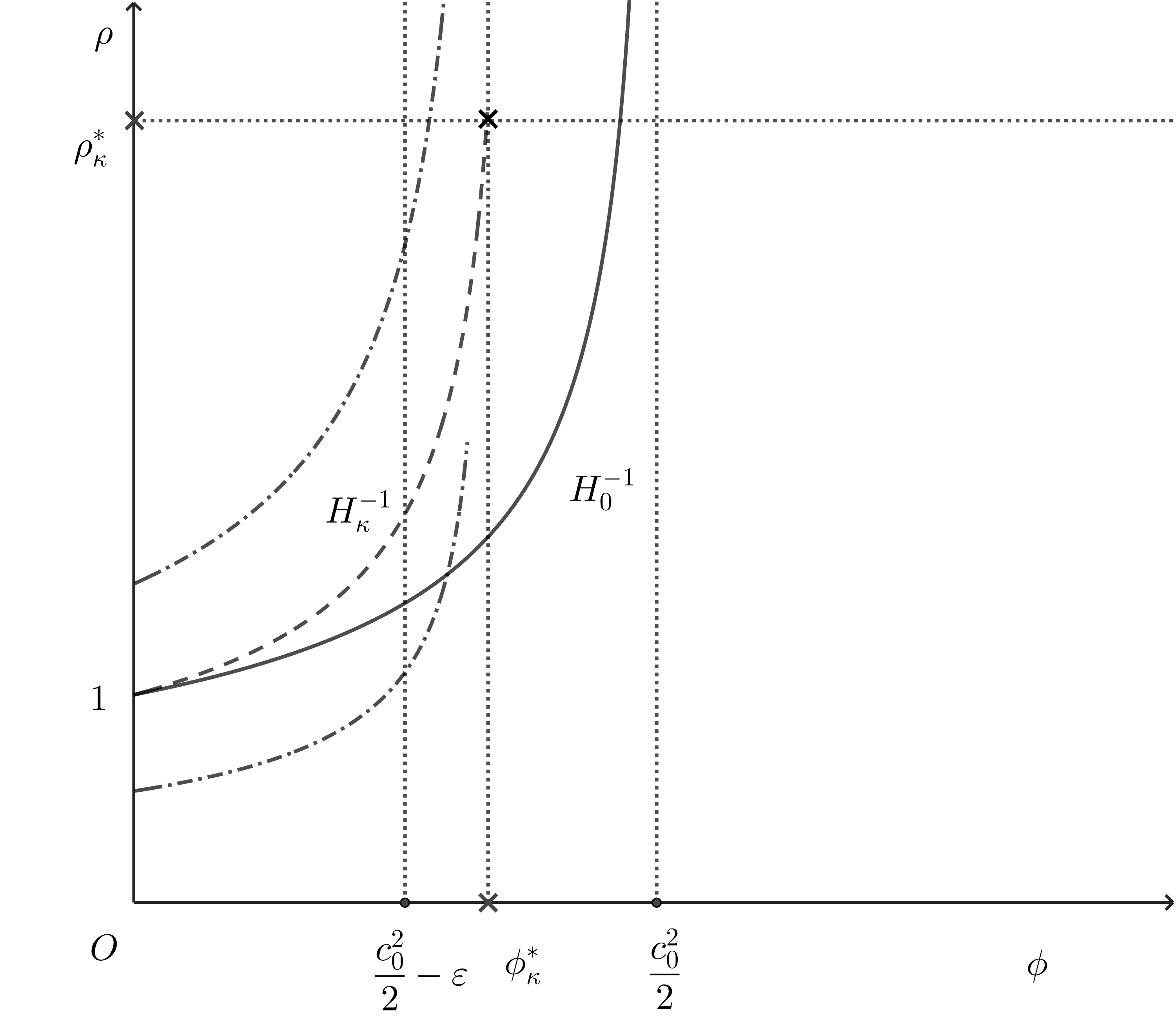}
\end{center}
\caption{An auxiliary figure for Lemmas \ref{H_K conv} and \ref{lem3.3}. Solid: $H_0^{-1}(\phi)$, dashed: $H_\kappa^{-1}(\phi)$ with $\kappa>0$, dash-dotted: positive functions $C(\phi_\kappa^\ast - \phi)^{-1/2}$ for some $C>0$. }
\label{FigNume0}
\end{figure}

\subsubsection{Behaviors near transition regime}
	\label{subsub_1}
In what follows, we investigate the shape of $\phi_\kappa(\xi)$ near $\xi=0$.
In this region, we analyze the shape of $\phi_\kappa$ by the properties of $H_\kappa^{-1}$ in Lemmas \ref{H_K conv} and \ref{lem3.3}. In particular, we divide the region into two parts  according to the behavior of $H_\kappa^{-1}$, which makes $\phi_\kappa$ behaves differently in each subregion; See \eqref{lem3.3eq1-1} and \eqref{lem3.3eq1}.\\

We first introduce some preliminaries. Let us denote
\begin{equation}	
	\label{eq240803_112}
\psi_\kappa(\xi):=H_\kappa(\rho_\kappa^\ast)-\phi_\kappa(\xi)=\phi_\kappa^*-\phi_\kappa(\xi).
\end{equation}
Note that $\psi_\kappa$ satisfies 
\begin{equation}\label{12_0809} 
\psi_\kappa'' = -\phi_{\kappa}'' = H_\kappa^{-1}(\phi_\kappa)-e^{\phi_\kappa}.
\end{equation}
We fix a constant $M \geq 2$. For $\kappa_0$ and $N(\kappa_0,M)$, that are positive constants from Lemma \ref{lem3.3}, we let $\veps_0>0$ be a constant such that 
\[
\veps_0 \le (2Ne^{\phi_\kappa^*})^{-2} \quad \text{for all } \kappa \in (0,\kappa_0).
\] 
For such $\veps_0>0$ and $\kappa \in (0,\kappa_0)$, we define intervals 
\begin{align}
	\label{eq240803_11}
I_\kappa & :=\left\{\xi \in \mathbb{R}:\psi_\kappa \left( \xi \right) \le H_\kappa \left( \rho_\kappa^\ast \right) - H_\kappa \left(\rho_\kappa^\ast/M \right) \right\}  \\
& = \left\{ \xi \in \mathbb{R}:     H_\kappa \left(\rho_\kappa^\ast/M \right) \le       \phi_\kappa(\xi)                \right\} \nonumber
\end{align}
and 
\begin{equation}
	\label{eq240529_01}
J_\kappa :=\{\xi \in \mathbb{R}:\psi_\kappa(\xi) \le \veps_0\}.
\end{equation}

We claim that
\begin{equation}
	\label{eq240510_02}
\frac{H_\kappa^{-1}(\phi_\kappa(\xi))}{2} \le \psi_\kappa''(\xi)  \le H_\kappa^{-1}(\phi_\kappa(\xi)) \quad \text{for} \quad \xi \in J_\kappa.
\end{equation}
The second inequality in \eqref{eq240510_02} directly follows from \eqref{12_0809}  since $e^{\phi_\kappa}>0$. To show the first inequality in \eqref{eq240510_02}, we use Lemma \ref{lem3.3}, and there are two cases: for $\xi \in J_\kappa$, $\phi(\xi)$  satisfies \eqref{lem3.3eq1-1} or \eqref{lem3.3eq1}.
If \eqref{lem3.3eq1} holds, then using the fact that $c_\kappa \to c_0 > 0$ as $\kappa \to 0$, there exists $\kappa_0 > 0$ such that for any $\kappa \in (0, \kappa_0)$,
\[
e^{\phi_\kappa(\xi)}
 \le  e^{\phi_\kappa^*} =  e^{\phi_\kappa^*} \frac{c_\kappa}{\sqrt{\kappa}} \frac{\sqrt{\kappa}}{c_\kappa }
 \le \frac12 H_\kappa^{-1}(\phi_\kappa(\xi)).
\]
If \eqref{lem3.3eq1-1} holds, then by \eqref{eq240529_01}, we have
\[
e^{\phi_\kappa(\xi)} \le  e^{\phi_\kappa^*} \le
 e^{\phi_\kappa^*} \veps_0^{1/2} \left( \psi_\kappa \left( \xi \right) \right)^{-1/2} \le  \frac{1}{2N} (\phi_\kappa^*-\phi_\kappa(\xi))^{-1/2}
 \le \frac12 H_\kappa^{-1}(\phi_\kappa(\xi))
\]
provided that $\veps_0 \le (2Ne^{\phi_\kappa^*})^{-2}$ ($N$ from Lemma \ref{lem3.3}). We finish the proof of the claim \eqref{eq240510_02}.\\

In the following two propositions, we obtain pointwise estimates of $\psi_\kappa(\xi)$ on $I_\kappa$ and $J_\kappa$, respectively. Also, we estimate the length of $I_\kappa$ and $J_\kappa$ with respect to the parameter $\kappa$.
In particular, we will see that $I_\kappa \subsetneq J_\kappa$ for sufficiently small $\kappa>0$.

We first deal with the case $\xi \in I_\kappa$.
\begin{proposition}
	\label{prop3.4}  
Let $\psi_\kappa$ and $I_\kappa$ be as in \eqref{eq240803_112} and \eqref{eq240803_11}, respectively.
Then there exist $\kappa_0 > 0$ and $C = C(\kappa_0) > 1$ such that $\psi_\kappa$ and $I_\kappa$ satisfy the following: 
for all $\kappa \in (0, \kappa_0)$, we have
\begin{equation*}
  C^{-1} \rho_\kappa^\ast\xi^2 \le \psi_\kappa(\xi) \le C \rho_\kappa^\ast \xi^2, \quad   C^{-1} \rho_\kappa^\ast \le \psi_\kappa''(\xi) \le C \rho_\kappa^\ast \quad \text{on} \quad I_\kappa,
\end{equation*}
where $\rho_\kappa^\ast \simeq \kappa^{-1/2}$.
Moreover, the length $|I_\kappa|$ of $I_\kappa$ satisfies
$|I_\kappa| \simeq \kappa^{3/4}$. Specifically, 
\begin{equation}\label{I1}
I_\kappa = [-N_\kappa \kappa^{3/4}, N_\kappa \kappa^{3/4}]
\end{equation}
for some $N_\kappa \simeq 1$.
\end{proposition}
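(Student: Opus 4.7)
The plan is to combine the two-sided sandwich bound \eqref{eq240510_02} for $\psi_\kappa''$ on $J_\kappa$ with the ``plateau'' estimate \eqref{lem3.3eq1} for $H_\kappa^{-1}$, and then double-integrate the resulting ODE inequality outward from the peak. First, I would verify the inclusion $I_\kappa \subset J_\kappa$ for all sufficiently small $\kappa$: evaluating \eqref{lem3.3eq2} at $\rho = \rho_\kappa^\ast/M$ yields
\[
\frac{M^2\kappa}{4} \;\leq\; H_\kappa(\rho_\kappa^\ast) - H_\kappa(\rho_\kappa^\ast/M) \;\leq\; \frac{M^2\kappa}{2},
\]
so the defining inequality $\psi_\kappa \leq \veps_0$ of $J_\kappa$ is automatically satisfied on $I_\kappa$ whenever $\kappa < 2\veps_0/M^2$. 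In particular both \eqref{eq240510_02} and \eqref{lem3.3eq1} are in force on $I_\kappa$, which together give
\[
(2N)^{-1}\rho_\kappa^\ast \;\leq\; \psi_\kappa''(\xi) \;\leq\; N\rho_\kappa^\ast \quad \text{on } I_\kappa.
\]
This is exactly the second assertion of the proposition.

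Next, I would integrate this ODE bound twice starting from $\xi = 0$. Since $\phi_\kappa$ is even by the peaked-solitary-wave symmetry \eqref{Symmetric} and is $C^1$ at the origin for $\kappa > 0$ by Theorem \ref{Thm2} (which yields $\phi_\kappa'(0) = 0$), the initial conditions $\psi_\kappa(0) = 0 = \psi_\kappa'(0)$ hold in the classical sense. The monotonicity \eqref{mono} implies $I_\kappa$ is an interval, so we may write $I_\kappa = [-a_\kappa, a_\kappa]$. Integrating the pointwise bound $\psi_\kappa''(\eta) \simeq \rho_\kappa^\ast$ from $0$ to $\xi$ gives $\psi_\kappa'(\xi) \simeq \rho_\kappa^\ast\,\xi$ on $[0,a_\kappa]$, and integrating once more gives $\psi_\kappa(\xi) \simeq \rho_\kappa^\ast\,\xi^2$; by symmetry the same holds on $[-a_\kappa, 0]$, establishing the first assertion.

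The length estimate then falls out by evaluating at the endpoint $\xi = a_\kappa$: one has $\psi_\kappa(a_\kappa) = H_\kappa(\rho_\kappa^\ast) - H_\kappa(\rho_\kappa^\ast/M) \simeq \kappa$ from the first paragraph, while $\psi_\kappa(a_\kappa) \simeq \rho_\kappa^\ast a_\kappa^2$ from the second. Using $\rho_\kappa^\ast = c_\kappa/\sqrt{\kappa} \simeq \kappa^{-1/2}$, we solve $a_\kappa^2 \simeq \kappa/\rho_\kappa^\ast \simeq \kappa^{3/2}$, yielding $a_\kappa \simeq \kappa^{3/4}$ and hence \eqref{I1} with $N_\kappa \simeq 1$. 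The whole argument is essentially mechanical once Lemmas \ref{H_K conv}--\ref{lem3.3} are at our disposal; the only genuinely delicate step is securing $I_\kappa \subset J_\kappa$, which is what allows the possibly disruptive term $e^{\phi_\kappa}$ in \eqref{12_0809} to be absorbed into $H_\kappa^{-1}(\phi_\kappa)$ via \eqref{eq240510_02} and thereby reduces the problem to a clean two-sided ODE bound.
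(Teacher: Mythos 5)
Your proposal is correct and follows essentially the same route as the paper: verify $I_\kappa \subset J_\kappa$ so that \eqref{eq240510_02} and \eqref{lem3.3eq1} combine to give $\psi_\kappa'' \simeq \rho_\kappa^\ast$ on $I_\kappa$, integrate twice from the peak using $\psi_\kappa(0)=\psi_\kappa'(0)=0$, and read off $|I_\kappa|$ by matching $\psi_\kappa(a_\kappa)\simeq \rho_\kappa^\ast a_\kappa^2$ against $H_\kappa(\rho_\kappa^\ast)-H_\kappa(\rho_\kappa^\ast/M)\simeq\kappa$. The only cosmetic difference is that the paper computes $H_\kappa(\rho_\kappa^\ast)-H_\kappa(\rho_\kappa^\ast/M)=\tfrac{\kappa}{2}(M^2-2\log M-1)$ exactly rather than via the two-sided bound of \eqref{lem3.3eq2}, which changes nothing.
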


\begin{proof}
We recall $\rho_\kappa^\ast = c_{\kappa}/\sqrt{\kappa}$. 
Note that $I_\kappa \subsetneq J_\kappa$ for sufficiently small $\kappa>0$ since
\begin{equation}	
	\label{eq240821_01}
H_\kappa(\rho_\kappa^\ast)-H_\kappa \left( \frac{\rho_\kappa^\ast}{M} \right)=\frac{\kappa}{2} \left( M^2 - 2\log M -1\right) < \veps_0.
\end{equation}

Since $H_\kappa(\rho_\kappa^* / M) \le \phi_\kappa(\xi)$ on $I_\kappa$, we have that
\[
 \rho_\kappa^\ast \simeq H_\kappa^{-1}\left( \phi_\kappa  \left( \xi \right) \right) \simeq \psi_\kappa''(\xi)  \quad \text{on} \quad I_\kappa \subset J_\kappa
\]
by \eqref{lem3.3eq1} of Lemma \ref{lem3.3} and \eqref{eq240510_02}, respectively.
This  with the fact that $\psi_\kappa(0) = \psi_\kappa'(0) = 0$ and $\sqrt{\kappa} \rho_\kappa^\ast = c_\kappa \to c_0 > 0$ as $\kappa \to 0$  implies that
\begin{equation}
	\label{psi1}
\psi_\kappa'(\xi) \simeq \rho_\kappa^\ast \xi \simeq \kappa^{-1/2}\xi \quad \text{and} \quad \psi_\kappa(\xi) \simeq \rho_\kappa^\ast \xi^2 \simeq \kappa^{-1/2}\xi^2 \quad \text{for}\quad  \xi \in I_\kappa \subset J_\kappa.
\end{equation}
Observe that \eqref{psi1} with the definition \eqref{eq240803_11} of $I_\kappa$ and  the equality in \eqref{eq240821_01} implies 
\[
\{ \xi \in \mathbb{R}: | \xi | \le L_\kappa \kappa^{3/4} \} \subset I_\kappa \subset \{ \xi \in \mathbb{R}: | \xi | \le R_\kappa \kappa^{3/4} \}
\]
for some $L_\kappa, R_\kappa \simeq 1$ (for sufficiently small $\kappa$). Hence, \eqref{I1} holds true.
The proposition is proved.
\end{proof}

 Now we  deal with the case $\xi \in J_\kappa \setminus I_\kappa$.
\begin{proposition}
	\label{prop3.4_1}  
Let $\psi_\kappa$ and $J_\kappa$ be as in \eqref{eq240803_112} and \eqref{eq240529_01}, respectively.
Then there exist $\kappa_0 > 0$ and $C = C(\kappa_0) > 1$ such that $\psi_\kappa$ and $J_\kappa$ satisfy the following: for all $\kappa \in (0, \kappa_0)$, we have
 \begin{equation} 
	\label{eq240716_01}
C^{-1} \le \, \frac{\psi_\kappa(\xi)}{\xi^{4/3}}, \, \frac{\psi_\kappa'(\xi)}{\xi^{1/3}}, \,  \frac{\psi_\kappa''}{\psi_\kappa^{-1/2}} \, \le C.
\end{equation}
 for $\xi \in J_\kappa \setminus I_\kappa$ with $\xi>0$. 
 Moreover, the length $|J_\kappa|$ of $J_\kappa$ is  of order $1$, i.e., $ |J_\kappa|  \simeq 1$.
\end{proposition}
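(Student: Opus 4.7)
The plan is to reduce \eqref{12_0809} on $J_\kappa\setminus I_\kappa$ to a nonlinear autonomous ODE for $\psi_\kappa$ and integrate it twice. First, since $\xi \notin I_\kappa$ gives $\phi_\kappa(\xi) < H_\kappa(\rho_\kappa^\ast/M)$, Lemma \ref{lem3.3} (inequality \eqref{lem3.3eq1-1}) yields $H_\kappa^{-1}(\phi_\kappa(\xi)) \simeq \psi_\kappa(\xi)^{-1/2}$ uniformly in $\kappa \in (0,\kappa_0)$. Combined with \eqref{eq240510_02} (which holds on the larger set $J_\kappa$), this produces the key pointwise relation
\[
\psi_\kappa''(\xi) \simeq \psi_\kappa(\xi)^{-1/2}\quad\text{on}\quad J_\kappa\setminus I_\kappa,
\]
which immediately gives the third bound in \eqref{eq240716_01}. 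Note that by the monotonicity clause of Proposition \ref{MainThm}, $\psi_\kappa'>0$ on $J_\kappa \setminus I_\kappa$ for $\xi>0$.

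Next, I multiply by $\psi_\kappa'$ and integrate from the right endpoint $a_\kappa := N_\kappa \kappa^{3/4}$ of $I_\kappa$ (cf.\ \eqref{I1}) up to $\xi$, producing the Bogomolny-type identity
\[
(\psi_\kappa'(\xi))^2 - (\psi_\kappa'(a_\kappa))^2 \simeq \psi_\kappa^{1/2}(\xi) - \psi_\kappa^{1/2}(a_\kappa).
\]
By Proposition \ref{prop3.4}, the boundary values satisfy $\psi_\kappa'(a_\kappa) \simeq \kappa^{1/4}$ and $\psi_\kappa(a_\kappa) \simeq \kappa$, so both boundary contributions are of order $\kappa^{1/2}$. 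To extract $(\psi_\kappa')^2 \simeq \psi_\kappa^{1/2}$ from this, I split into two cases: if $\psi_\kappa(\xi) \ge 2\psi_\kappa(a_\kappa)$ the growth term dominates and the comparison is clear; if $\psi_\kappa(\xi) < 2\psi_\kappa(a_\kappa)$, then both $(\psi_\kappa')^2$ and $\psi_\kappa^{1/2}$ remain comparable to the boundary value $\kappa^{1/2}$. Either way, one obtains $\psi_\kappa' \simeq \psi_\kappa^{1/4}$ on $J_\kappa\setminus I_\kappa$ with $\xi>0$.

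A second integration of $\psi_\kappa^{-1/4}\psi_\kappa'\simeq 1$ from $a_\kappa$ to $\xi$ then yields $\psi_\kappa^{3/4}(\xi) - \psi_\kappa^{3/4}(a_\kappa) \simeq \xi - a_\kappa$. Since $\psi_\kappa^{3/4}(a_\kappa) \simeq \kappa^{3/4} \simeq a_\kappa$, this simplifies to $\psi_\kappa(\xi) \simeq \xi^{4/3}$, which is the first bound in \eqref{eq240716_01}; the bound on $\psi_\kappa'/\xi^{1/3}$ follows by substitution into $\psi_\kappa'\simeq\psi_\kappa^{1/4}$. For the length: the positive endpoint of $J_\kappa$ is determined by $\psi_\kappa=\veps_0$, and from $\psi_\kappa\simeq\xi^{4/3}$ this endpoint lies at $\xi\simeq \veps_0^{3/4}\simeq 1$, giving $|J_\kappa|\simeq 1$ by symmetry.

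The main technical obstacle I anticipate is handling the boundary contributions at $\xi=a_\kappa$: because $(\psi_\kappa'(a_\kappa))^2$ and $\psi_\kappa^{1/2}(a_\kappa)$ are of the same order $\kappa^{1/2}$ as the leading quantities near the boundary of $I_\kappa$, the usual absorption argument fails and the case-split above is essential to maintain constants uniform in $\kappa$ all the way down to the interface $\xi=a_\kappa$. Once this matching is secured, the remaining computations are a direct application of the ODE integration scheme.
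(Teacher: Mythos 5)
Your proposal is correct and follows essentially the same route as the paper: both derive $\psi_\kappa'' \simeq \psi_\kappa^{-1/2}$ on $J_\kappa\setminus I_\kappa$ from Lemma \ref{lem3.3} and \eqref{eq240510_02}, integrate twice starting at the interface $\xi = N_\kappa\kappa^{3/4}$, and exploit the fact that $(\psi_\kappa'(N_\kappa\kappa^{3/4}))^2$ and $\psi_\kappa^{1/2}(N_\kappa\kappa^{3/4})$ are both of order $\kappa^{1/2}$ to control the boundary terms. The paper absorbs the boundary contribution via the monotonicity of $\psi_\kappa$ on $\mathbb{R}_+$ rather than your two-case split, but this is the same observation in a slightly different wrapping.
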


\begin{proof}
Since $I_\kappa = \{\xi \in \mathbb{R}: H_\kappa(\rho_\kappa^*/M) \le \phi_\kappa(\xi) \}$, it holds that $\phi_\kappa(\xi) < H_\kappa(\rho_\kappa^*/M)$ for $\xi \in J_\kappa \setminus I_\kappa$.
Then by \eqref{lem3.3eq1-1} of Lemma \ref{lem3.3} and \eqref{eq240510_02}, we have
\[
\psi_\kappa^{-1/2}(\xi) = \frac{1}{\sqrt{\phi_\kappa^* - \phi(\xi) }} \simeq H_\kappa^{-1}\left( \phi  \left( \xi \right) \right)  \simeq  \psi_\kappa''(\xi)
\]
for $\xi \in J_\kappa \setminus I_\kappa$.
In other words, there are constants $a, A>0$ such that 
\begin{equation}\label{psi2}
a \psi_\kappa^{-1/2} \le \psi_\kappa'' \le A \psi_\kappa^{-1/2} \quad \text{for} \quad \xi \in J_\kappa \setminus I_\kappa,
\end{equation}
which proves \eqref{eq240716_01} involving $\psi_\kappa''$.
\\

We prove $\psi_\kappa \simeq \xi^{4/3}$ and $\psi_\kappa' \simeq \xi^{1/3}$ using the relation \eqref{psi2}.
First, by multiplying $\psi_\kappa'$ to the first inequality of \eqref{psi2}, and then integrating it on $(N_\kappa \kappa^{3/4}, \xi)$ for $\xi \in (J_\kappa \setminus I_\kappa) \cap \mathbb{R}_+$ (recall $\psi_\kappa' > 0$ for $\xi \in \mathbb{R}_+$), we have
\[
2a\left((\psi_\kappa)^{1/2}(\xi)-(\psi_\kappa)^{1/2}(N_\kappa \kappa^{3/4})\right) \le \frac12 \left((\psi_\kappa')^2(\xi)-(\psi_\kappa')^2(N_\kappa \kappa^{3/4})\right) .
\]
On the other hand, from \eqref{I1} and \eqref{psi1}, we see that
\begin{equation}\label{eq0824B1}
\frac{(\psi_\kappa')^2(N_\kappa \kappa^{3/4})}{(\psi_\kappa)^{1/2}(N_\kappa \kappa^{3/4})} \simeq \frac{(\rho_\kappa^\ast)^2 N_\kappa^2 \kappa^{3/2}}{(\rho_\kappa^\ast)^{1/2} N_\kappa \kappa^{3/4}} = N_\kappa (\rho_\kappa^\ast)^{3/2}  \kappa^{3/4} \simeq c_\kappa^{3/2} \to c_0^{3/2} > 1 
\end{equation} 
as $\kappa \to 0$. 
Thus, there are positive constants $\kappa_0 $ and $b = b(\kappa_0) <1$ such that, for all $\kappa \in (0, \kappa_0)$, we have
\begin{equation}\label{phi3}
\begin{split}
(\psi_\kappa')^2(\xi)
& \ge 4a(\psi_\kappa)^{1/2}(\xi) + (\psi_\kappa')^2(N_\kappa \kappa^{3/4})-4a(\psi_\kappa)^{1/2}(N_\kappa \kappa^{3/4}) \\
& \ge 4a (\psi_\kappa)^{1/2}(\xi) - 4a(1-b)(\psi_\kappa)^{1/2}(N_\kappa \kappa^{3/4}).
\end{split}
\end{equation}
By using the fact that $\psi_\kappa$ is strictly increasing on $\mathbb{R}_+$, we obtain from \eqref{phi3} that 
\[
(\psi_\kappa')^2(\xi) \geq 4ab (\psi_\kappa)^{1/2}(\xi)
\]
if we choose sufficiently small $b > 0$ uniformly on $\mathbb{R}_+ \cap \overline{(J_\kappa\setminus I_\kappa)}$.
Therefore, for some constant $\tilde{b} >0$,   we have
\begin{equation}
	\label{eq240511_01}
(\psi_\kappa^{3/4})'(\xi) = \frac34 (\psi_\kappa)'(\xi) (\psi_\kappa)^{-1/4}(\xi)  \geq \tilde{b} \quad \textrm{for} \quad \xi \in \mathbb{R}_+ \cap (J_\kappa\setminus I_\kappa).
\end{equation}
Combining \eqref{I1} and \eqref{psi1} with the facts that $N_{\kappa} \simeq 1$ and  $\sqrt{\kappa}\rho_\kappa^\ast = c_{\kappa} \to c_0 > 0$ as $\kappa \to 0$,   we can choose sufficiently small $\kappa_0$ such that for all $\kappa \in (0,\kappa_0)$, it holds that 
\[
\psi_{\kappa}(N_{\kappa} \kappa^{3/4}) \simeq \rho_\kappa^\ast \kappa^{3/2} \simeq  \kappa.
\]
Hence, there exists a constant $\hat{b}(\kappa_0)>0$  such that for all $\kappa \in (0,\kappa_0)$, we have
\[
\psi_\kappa^{3/4}(N_\kappa \kappa^{3/4})  \geq  \int_0^{N_{\kappa}\kappa^{3/4}} \hat{b} \,d\xi.
\]  
Combined with \eqref{eq240511_01}, this implies that 
\begin{equation}\label{psi4}
\psi_\kappa^{3/4}(\xi) \ge \int_{N_\kappa \kappa^{3/2}}^\xi \tilde{b}\,d\xi + \psi_\kappa^{3/4}(N_\kappa \kappa^{3/4}) \geq  \int_0^{\xi} \min \{ \tilde{b}, \hat{b} \} \,d\xi \gtrsim \xi
\end{equation}
for $\xi \in \mathbb{R}_+ \cap (J_\kappa\setminus I_\kappa)$ and $\kappa \in (0, \kappa_0)$.
Together with the relation \eqref{eq240511_01}, we obtain from  \eqref{psi4} that 
\begin{equation}
	\label{eq240821_02}
\psi_\kappa (\xi) \gtrsim \xi^{4/3} \quad \text{and} \quad \psi_\kappa'(\xi) \gtrsim \xi^{1/3} \quad \text{for} \quad \xi \in  \mathbb{R}_+ \cap (J_\kappa\setminus I_\kappa).
\end{equation}

We proceed in a similar way to obtain inequalities \eqref{eq240821_02} with $\gtrsim$ replaced by $\lesssim$.  
From \eqref{psi2}, we have
\[
\frac12 \left((\psi_\kappa')^2(\xi)-(\psi_\kappa')^2(N_\kappa \kappa^{3/4})\right) \le 2A\left((\psi_\kappa)^{1/2}(\xi)-(\psi_\kappa)^{1/2}(N_\kappa \kappa^{3/4})\right).
\]
Using \eqref{eq0824B1} and that $\psi_\kappa$ is increasing on $\mathbb{R}_+$, we see  
\begin{align*}
(\psi_\kappa')^2(\xi)& \le 4A(\psi_\kappa)^{1/2}(\xi) + (\psi_\kappa')^2(N_\kappa \kappa^{3/4})-4A(\psi_\kappa)^{1/2}(N_\kappa \kappa^{3/4}) \\
& \le 4A (\psi_\kappa)^{1/2}(\xi) + B(\psi_\kappa)^{1/2}(N_\kappa \kappa^{3/4}) \\
& \le (4A+B) (\psi_\kappa)^{1/2}(\xi)
\end{align*}
for some constant $B>0 $ independent of $\kappa \in (0, \kappa_0)$.
Therefore, $(\psi_\kappa)'(\xi) (\psi_\kappa)^{-1/4}(\xi) \le (4A+B)$ for $ \xi \in  \mathbb{R}_+ \cap (J_\kappa\setminus I_\kappa)$.
From this, we obtain that
\begin{equation}\label{psi5}
\psi_\kappa (\xi) \lesssim \xi^{4/3} \quad \text{and} \quad \psi_\kappa'(\xi) \lesssim \xi^{1/3} \quad \text{for} \quad \xi \in  \mathbb{R}_+ \cap (J_\kappa\setminus I_\kappa).
\end{equation} \\
By combining \eqref{eq240821_02} and \eqref{psi5} we deduce that $\psi_\kappa (\xi) \simeq \xi^{4/3}$ and $\psi_\kappa' (\xi) \simeq \xi^{1/3}$ for $\xi \in J_\kappa \setminus I_\kappa$. 
This with \eqref{psi2} proves \eqref{eq240716_01}. Finally, we see that $|J_\kappa| \simeq 1$ by the fact in \eqref{eq240716_01} that $\psi_\kappa \simeq \xi^{4/3}$ on $(J_\kappa \setminus I_\kappa ) \cap \mathbb{R}_+$ and the definition of $J_\kappa = \{ \xi \in \mathbb{R} : \psi_\kappa(\xi) \le \varepsilon_0 \}$. The proposition is proved.
\end{proof}

\subsubsection{Uniform exponential decay and proof of Theorem \ref{thm3}}
	\label{subsub_2}

In order to estimate $\phi_\kappa$ in $\mathbb{R}\setminus J_\kappa =\{\xi \in \mathbb{R}: \psi_\kappa = \phi_\kappa^\ast - \phi_\kappa \geq \veps_0\}$, we analyze the map $\phi \mapsto e^{\phi}-H_\kappa^{-1}(\phi)$ for $\phi \in (0,c_0^2/2-\veps_0)$. (Here we note that $c_0^2/2-\veps_0 >  \phi_\kappa^\ast-\veps_0$.)
Although $H_\kappa^{-1}$ is not written explicitly, using Lemma \ref{H_K conv}, we obtain the following properties.

\begin{lemma}
	\label{lem3.5}
There exists $\kappa_0 > 0$ such that the following hold:
\begin{enumerate}
\item For any $\kappa \in [0, \kappa_0)$, there exists $a_\kappa>0$ such that 
\begin{equation}
	\label{step3eq1}
e^{\phi}-H_\kappa^{-1}(\phi)
\begin{cases}
 \ge 0 \quad \text{ for } \quad \phi \le a_\kappa,\\
\le 0  \quad \text{ for } \quad  \phi \ge a_\kappa.
\end{cases} 
\end{equation}
Moreover, $a_\kappa \to a_0$ as $\kappa \to 0$.

\item For any $\delta>0$, there exists $c=c(\delta)>0$ and small number $\kappa_\delta < \kappa_0$ such that, for $\kappa \in [0, \kappa_\delta)$,
\begin{equation}
	\label{step3eq4}
e^{\phi}-H_\kappa^{-1}(\phi) \ge c\phi \quad \text{ for } \quad  \phi \le a_\kappa-\delta.
\end{equation}

\end{enumerate}

\end{lemma}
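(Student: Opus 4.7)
The plan is to identify $a_\kappa$ explicitly as the value of $\phi$ at the center of the ODE system from Section~2, transfer the sign information from the $\rho$-variable to the $\phi$-variable via $H_\kappa^{-1}$, and then upgrade to a linear lower bound by exploiting the continuous dependence on $\kappa$ furnished by Lemma~\ref{H_K conv}. Concretely, I would set $a_\kappa := \ln\hat{\rho}_\kappa = H_\kappa(\hat{\rho}_\kappa)$, where $\hat{\rho}_\kappa \in (1,\rho_\kappa^\ast)$ (with the convention $\rho_0^\ast = +\infty$) is the unique solution of $\rho = e^{H_\kappa(\rho)}$ constructed in Subsection~\ref{Stationary}. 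Because $H_\kappa$ is strictly increasing on $[1,\rho_\kappa^\ast]$, so that $H_\kappa^{-1}$ makes sense on $[0,\phi_\kappa^\ast)$, the dichotomy \eqref{Aux 6} translates, under the substitution $\rho = H_\kappa^{-1}(\phi)$, into
\[
e^\phi - H_\kappa^{-1}(\phi) > 0 \text{ for } \phi \in (0,a_\kappa), \qquad e^\phi - H_\kappa^{-1}(\phi) < 0 \text{ for } \phi \in (a_\kappa,\phi_\kappa^\ast),
\]
which is the sign assertion \eqref{step3eq1}.

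To establish $a_\kappa \to a_0$, I would put $F_\kappa(\rho) := e^{H_\kappa(\rho)} - \rho$. Since $c_\kappa \to c_0$ and $\kappa \log\rho \to 0$ uniformly on compact subsets of $(0,\infty)$, both $F_\kappa$ and $F_\kappa'$ converge locally uniformly to $F_0$ and $F_0'$. A short computation using $e^{H_0(\hat{\rho}_0)} = \hat{\rho}_0$, analogous to \eqref{AuxCalcul1}, gives $F_0'(\hat{\rho}_0) = c_0^2/\hat{\rho}_0^2 - 1 < 0$, so $F_0$ strictly decreases across a small neighbourhood of $\hat{\rho}_0$. A standard IVT argument (fix $r > 0$ small with $F_0(\hat{\rho}_0 - r) > 0 > F_0(\hat{\rho}_0 + r)$, propagate these strict inequalities to $F_\kappa$ by uniform convergence) then yields $\hat{\rho}_\kappa \to \hat{\rho}_0$, whence $a_\kappa = \ln\hat{\rho}_\kappa \to \ln\hat{\rho}_0 = a_0$.

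For part (2) I would introduce the continuously extended quotient
\[
g_\kappa(\phi) := \frac{e^\phi - H_\kappa^{-1}(\phi)}{\phi} = \frac{e^\phi - 1}{\phi} - \frac{1}{\phi}\int_0^\phi \bigl(H_\kappa^{-1}\bigr)'(s)\,ds, \qquad \phi > 0,
\]
with the extension $g_\kappa(0) := 1 - (H_\kappa^{-1})'(0) = 1 - 1/(c_\kappa^2 - \kappa)$, which is strictly positive since $c_\kappa^2 > 1 + \kappa$. Lemma~\ref{H_K conv} gives $(H_\kappa^{-1})' \to (H_0^{-1})'$ uniformly on $[0, c_0^2/2 - \varepsilon]$ for every $\varepsilon > 0$, and averaging the above identity upgrades this to uniform convergence $g_\kappa \to g_0$ on the same interval, including at the endpoint $\phi = 0$. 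Given $\delta > 0$, part~(1) and continuity of $g_0$ yield $m := \min_{[0,\,a_0 - \delta/2]} g_0 > 0$. Combining $a_\kappa \to a_0$ with the uniform convergence, for all sufficiently small $\kappa$ one has $[0, a_\kappa - \delta] \subset [0, a_0 - \delta/2]$ and $g_\kappa \geq m/2$ throughout, which is \eqref{step3eq4} with $c(\delta) := m/2$.

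The delicate point is the uniform linear bound near $\phi = 0$: a pointwise Taylor expansion $e^\phi - H_\kappa^{-1}(\phi) = (1 - (H_\kappa^{-1})'(0))\phi + o(\phi)$ supplies a $\kappa$-dependent window of validity only and is therefore insufficient. Passing to the extended quotient $g_\kappa$ converts the problem into uniform convergence of a continuous family on a compact interval, which is precisely what the $C^1$-statement of Lemma~\ref{H_K conv} provides.
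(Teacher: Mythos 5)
Your proof is correct and follows essentially the same route as the paper: $a_\kappa = H_\kappa(\hat{\rho}_\kappa)$ with the sign dichotomy read off from \eqref{Aux 6}, convergence $a_\kappa \to a_0$ from the uniqueness of the stationary point together with Lemma \ref{H_K conv}, and the linear lower bound transferred from $\kappa=0$ to small $\kappa$ via the $C^1$ convergence of $H_\kappa^{-1}$. Your quotient function $g_\kappa$ merely makes explicit the near-origin uniformity (namely that $|H_\kappa^{-1}(\phi)-H_0^{-1}(\phi)|\lesssim \veps\phi$ because the inverses agree at $\phi=0$ and converge in $C^1$), a point the paper's proof uses but leaves terse.
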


\begin{proof}
\textit{(1)} The first inequality \eqref{step3eq1} can be obtained by taking $a_\kappa=H_\kappa(\hat{\rho}_{\kappa})$ for $\kappa \ge 0$ where $\hat{\rho}_{\kappa}$ from \eqref{Aux 6}.
Moreover, by \eqref{step3eq1} and uniform convergence of $H_\kappa^{-1}$ (Lemma \ref{H_K conv} ), we have that $e^\phi-H_0^{-1}(\phi) = 0$ on $[\liminf_{\kappa \to 0}a_\kappa,\limsup_{\kappa \to 0} a_\kappa]$.
This means that $a_\kappa$ converge to $a_0$ as $\kappa \to 0$ by the uniqueness of the zero of $e^\phi-H_0^{-1}(\phi)$ \eqref{Aux 6}.

\textit{(2)} On the other hand, one verify by direct calculations that, for given $\delta>0$, there exists $\tilde{c}=\tilde{c}(\delta)>0$ satisfying
\begin{equation*}
	\label{step3eq3}
e^{\phi}-H_0^{-1}(\phi) \ge \tilde{c} \phi \quad \text{ for } \quad  \phi \le a_0 - \frac{\delta}{2}.
\end{equation*}
Thus, by Lemma \ref{H_K conv} and the fact $a_\kappa \to a_0$, it holds that for sufficiently small $\kappa>0$,
\begin{equation*}
e^{\phi}-H_\kappa^{-1}(\phi) \ge \frac{\tilde{c}}{2} \phi \quad \text{ for } \quad \phi \le a_\kappa- \delta.
\end{equation*}
The lemma is proved.
\end{proof}

With the help of Lemma \ref{lem3.5}, we obtain an uniform (in $\kappa$) exponential decay of $\phi_\kappa$ and $\rho_\kappa$.
\begin{proposition}\label{prop3.6} 
There exist constants $\kappa_0 > 0$ and $C, N > 0$, independent of $\kappa$, such that for all $\kappa \in (0,\kappa_0)$, 
\[\phi_\kappa(\xi), \,\, \rho_{\kappa}(\xi) - 1 \le C\exp(-N |\xi|) \quad \text{ for } \,\, \xi \in \mathbb{R} \setminus J_\kappa.\]
\end{proposition}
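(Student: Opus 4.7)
The plan is to split $\mathbb{R}_+ \setminus J_\kappa$ into a ``convex tail'' on which the differential inequality of Lemma~\ref{lem3.5}(2) drives exponential decay, and a bounded transition interval whose length is controlled via the first integral \eqref{1st Int}. By the symmetry $\phi_\kappa(-\xi)=\phi_\kappa(\xi)$ it suffices to treat $\xi>0$. Fix a small $\delta>0$ independent of $\kappa$ such that $a_\kappa-\delta>0$ for all $\kappa\in[0,\kappa_\delta)$, and let $\xi_J^\kappa$ (resp.\ $\xi_\ast^\kappa$) be the unique positive $\xi$ with $\phi_\kappa(\xi)=\phi_\kappa^\ast-\veps_0$ (resp.\ $\phi_\kappa(\xi)=a_\kappa-\delta$). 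Choosing $\veps_0,\delta$ small enough, we have $\phi_\kappa^\ast-\veps_0>a_\kappa-\delta$ for small $\kappa$, so $\xi_J^\kappa<\xi_\ast^\kappa$, and $\xi_J^\kappa\simeq 1$ by Proposition~\ref{prop3.4_1}.

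On the tail $[\xi_\ast^\kappa,\infty)$ the wave is monotone decreasing with $\phi_\kappa\le a_\kappa-\delta$, so Lemma~\ref{lem3.5}(2) combined with \eqref{12_0809} yields
\[
\phi_\kappa''=e^{\phi_\kappa}-H_\kappa^{-1}(\phi_\kappa)\ge c\,\phi_\kappa,\qquad c=c(\delta)>0.
\]
Multiplying by $2\phi_\kappa'<0$ (which reverses the inequality), integrating on $[\xi,\infty)$, and using the qualitative decay $\phi_\kappa,\phi_\kappa'\to 0$ at infinity from Proposition~\ref{MainThm}, I obtain $(\phi_\kappa')^2\ge c\phi_\kappa^2$ on $[\xi_\ast^\kappa,\infty)$. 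Hence $-\phi_\kappa'/\phi_\kappa\ge\sqrt c$, which integrates to
\[
\phi_\kappa(\xi)\le(a_\kappa-\delta)\,e^{-\sqrt c\,(\xi-\xi_\ast^\kappa)} \qquad \text{for } \xi\ge\xi_\ast^\kappa.
\]

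The main obstacle is the uniform bound $\xi_\ast^\kappa-\xi_J^\kappa\le C$, which I plan to establish via the first integral. From \eqref{1st Int} with $E=-\phi_\kappa'$,
\[
(\phi_\kappa')^2 = 2\bigl(g_\kappa(\rho_\kappa)-g_\kappa(1)\bigr),\qquad g_\kappa(\rho)=\frac{c_\kappa^2}{\rho}+\kappa\rho+e^{H_\kappa(\rho)}.
\]
For $\phi_\kappa(\xi)\in[a_\kappa-\delta,\phi_\kappa^\ast-\veps_0]$ the target interval is contained in a compact subset of $[0,c_0^2/2)$ uniformly for small $\kappa$, so Lemma~\ref{H_K conv} gives $\rho_\kappa(\xi)=H_\kappa^{-1}(\phi_\kappa(\xi))\in[R_1,R_2]$ for some compact interval $[R_1,R_2]\subset(1,\infty)$ independent of $\kappa$. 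By Lemmas~\ref{Lemma1}–\ref{Lemma2}, $g_0-g_0(1)>0$ on $(1,\infty)$, so the uniform convergence $g_\kappa\to g_0$ on $[R_1,R_2]$ (which follows from $c_\kappa\to c_0$ and $H_\kappa\to H_0$) yields $g_\kappa-g_\kappa(1)\ge m>0$ there for all small $\kappa$. Hence $|\phi_\kappa'|\ge\sqrt{2m}$ on $[\xi_J^\kappa,\xi_\ast^\kappa]$, and
\[
\sqrt{2m}\,(\xi_\ast^\kappa-\xi_J^\kappa)\le\int_{\xi_J^\kappa}^{\xi_\ast^\kappa}|\phi_\kappa'|\,d\xi = (\phi_\kappa^\ast-\veps_0)-(a_\kappa-\delta)\le c_0^2,
\]
completing the bound.

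Combining $\xi_J^\kappa\le C$, $\xi_\ast^\kappa-\xi_J^\kappa\le C$, and the tail bound above, and absorbing the trivial pointwise bound $\phi_\kappa\le\phi_\kappa^\ast\le c_0^2/2$ on the finite transition interval $[\xi_J^\kappa,\xi_\ast^\kappa]$ into the prefactor, I obtain $\phi_\kappa(\xi)\le C\,e^{-N\xi}$ for $\xi\in[\xi_J^\kappa,\infty)$ with constants $C,N>0$ independent of small $\kappa$; the analogous bound on $(-\infty,-\xi_J^\kappa]$ follows by symmetry. Finally, since $H_\kappa^{-1}(0)=1$, the mean value theorem gives $\rho_\kappa-1=(H_\kappa^{-1})'(\theta)\,\phi_\kappa$ for some $\theta\in(0,\phi_\kappa)$, and Lemma~\ref{H_K conv} provides a uniform $L^\infty$ bound on $(H_\kappa^{-1})'$ over the compact range $[0,c_0^2/2-\veps_0/2]$ in which $\phi_\kappa$ lies on $\mathbb{R}\setminus J_\kappa$. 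Thus $\rho_\kappa-1\lesssim\phi_\kappa$ there, and the same exponential decay transfers to $\rho_\kappa-1$.
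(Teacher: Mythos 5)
Your proof is correct, and while the two key endpoints of the argument coincide with the paper's (the tail estimate via Lemma \ref{lem3.5}(2) giving $\phi_\kappa''\ge c\phi_\kappa$, multiplied by $\phi_\kappa'$ and integrated to $-(\log\phi_\kappa)'\ge\sqrt c$; and the transfer to $\rho_\kappa-1$ via the uniform control of $H_\kappa^{-1}$ near $\phi=0$ from Lemma \ref{H_K conv}), the middle step is handled by a genuinely different mechanism. The paper locates the inflection point $\xi_\kappa=\phi_\kappa^{-1}(a_\kappa)$, evaluates the first integral at the single value $\rho=\hat\rho_\kappa$ to get a uniform lower bound $-\phi_\kappa'(\xi_\kappa)>2\mu$, and then uses the concavity of $\phi_\kappa$ on $(0,\xi_\kappa)$ and convexity on $(\xi_\kappa,\infty)$, together with a uniform bound on $\|\phi_\kappa''\|_{L^\infty(\mathbb{R}\setminus J_\kappa)}$, to show both that $\xi_\kappa\simeq 1$ and that $\phi_\kappa$ drops a definite amount below $a_\kappa$ within a fixed distance $\delta$ past $\xi_\kappa$. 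You instead bound the length of the transition interval $[\xi_J^\kappa,\xi_\ast^\kappa]$ directly: since there $\rho_\kappa=H_\kappa^{-1}(\phi_\kappa)$ stays in a compact set $[R_1,R_2]\subset(1,\infty)$ uniformly in small $\kappa$, the first integral together with $g_0-g_0(1)>0$ on $(1,\infty)$ (Lemmas \ref{Lemma1}--\ref{Lemma2}) and uniform convergence $g_\kappa\to g_0$ gives $|\phi_\kappa'|\ge\sqrt{2m}$ on the whole interval, so its length is at most $(\phi_\kappa^\ast-\veps_0-a_\kappa+\delta)/\sqrt{2m}$. Your route avoids the convexity bookkeeping and the auxiliary $L^\infty$ bound on $\phi_\kappa''$, and it lets you start the Gronwall inequality exactly at the level set $\phi_\kappa=a_\kappa-\delta$ rather than at $\xi_\kappa+\delta$; the paper's route, in exchange, pins down the location of the inflection point $\xi_\kappa\simeq 1$, a fact it highlights elsewhere (Remark on the center of the ODE system). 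Both arguments use the same ingredients (first integral, Lemma \ref{H_K conv}, Lemma \ref{lem3.5}), and I see no gap in yours.
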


\begin{proof}
Owing to the symmetry, we only consider $\xi \in \mathbb{R}_+ \setminus J_\kappa$.  
 For each $\kappa \geq 0$, we set $\xi_\kappa:=\phi_\kappa^{-1}(a_\kappa)>0$, where $a_\kappa=H(\hat{\rho}_\kappa)$ from Lemma \ref{lem3.5}. Then, since $H_\kappa(\hat{\rho}_\kappa) = \phi_\kappa(\xi_\kappa) = H_\kappa(\rho_\kappa(\xi_\kappa))$ and $H_\kappa(\rho_\kappa(\xi))$ is strictly decreasing on $\xi \in \mathbb{R}_+$, we have $\hat{\rho}_\kappa = \rho_\kappa(\xi_\kappa)$. 

From \eqref{Aux 4}, we obtain
\[
\begin{split}
\frac{1}{2}(\phi_\kappa')^2(\xi_\kappa) 
& = \frac{c_\kappa^2}{\hat{\rho}_\kappa} + \kappa\hat{\rho}_\kappa + e^{H_\kappa(\hat{\rho}_\kappa)} - (c_\kappa^2 + \kappa + 1) \\
& = \frac{c_\kappa^2}{\hat{\rho}_\kappa} + (\kappa+1)\hat{\rho}_\kappa - (c_\kappa^2 + \kappa + 1),
\end{split}
\]
where we have used the fact that $\hat{\rho}_\kappa = e^{H_\kappa(\hat{\rho}_\kappa)}$ (see subsection \ref{Stationary}). Since 
\[
\lim_{\kappa \to 0} \left( \frac{c_\kappa^2}{\hat{\rho}_\kappa} + (\kappa+1)\hat{\rho}_\kappa - (c_\kappa^2 + \kappa + 1) \right) =  \frac{c_0^2}{\hat{\rho}_0} + \hat{\rho}_0 - (c_0^2 + 1) =\frac12 (\phi_0')^2(\xi_0)  > 0,
\]
we see that there exist positive constants $\tilde{\kappa}_0$ and $\mu(\tilde{\kappa}_0)$ such that for all $\kappa\in[0, \tilde{\kappa}_0)$,
\begin{equation}\label{Au3}
4\mu>-\phi_\kappa'(\xi_\kappa)>2\mu.
\end{equation} 

We claim that 
\[
0<\xi_\ast:=\inf_{\kappa\in [0, \tilde{\kappa}_0)}\xi_\kappa \leq  \sup_{\kappa\in [0, \tilde{\kappa}_0 )}\xi_\kappa =:\xi^{\ast} <\infty.
\]
First of all, thanks to \eqref{step3eq1},  we observe that $\phi_\kappa$ is concave (i.e., $\phi_\kappa''<0$) on $(0,\xi_\kappa)$ and convex (i.e., $\phi_\kappa''>0$) on $(\xi_\kappa,\infty)$ since $\phi_\kappa'' = e^{\phi_\kappa} - H_\kappa^{-1}(\phi_\kappa)$ (see also Figure \ref{FigNumeric} in Section 2.4). Hence,
\[
\frac{\phi_\kappa(\xi_\kappa) - \phi_\kappa^\ast}{\xi_\kappa} = \phi'_\kappa(\tilde{\xi}_\kappa) \geq \phi'_\kappa(\xi_\kappa),
\] 
where $0<\tilde{\xi}_\kappa < \xi_\kappa$, and we deduce that  $0 < \xi_\ast$ using \eqref{Au3} and the fact that $\phi_\kappa(\xi_\kappa)-\phi_\kappa^\ast \to a_0-\phi_0^\ast<0$ as $\kappa \to 0$. On the other hand, since $\xi_\kappa>\xi_\ast>0$, the concavity of $\phi$ yields that  
\begin{equation}\label{Au1.0}
-\frac{\phi_\kappa^*}{\xi_\kappa- \xi_\ast} \le \frac{\phi_\kappa(\xi_\kappa)-\phi_\kappa(\xi_\ast)}{\xi_\kappa-\xi_\ast} \le \phi_\kappa'(\xi_\ast) \le \sup_{\kappa \in [0, \tilde{\kappa}_0 )}\phi_\kappa'(\xi_\ast) < 0,
\end{equation}
where we have used that $\phi_\kappa^\ast \geq \phi_\kappa(\xi_\ast) \geq \phi_\kappa(\xi_\ast) - \phi_\kappa(\xi_\kappa)$ for the first inequality. Now \eqref{Au1.0} implies that there is a  constant $\tilde{C}>0$, uniform in $\kappa \geq 0$, such that  $\xi_\kappa  \leq  \xi_\ast + C \phi_\kappa^\ast \leq \tilde{C}$ for all $\kappa\in[0, \tilde{\kappa}_0)$. Thus,  we have $\xi^\ast < \infty$.

We note that for $\xi > \xi_\kappa$,
\[
\left| \phi_\kappa'(\xi) \right| \geq  |\phi_\kappa'(\xi_\kappa)|  -  \int_{\xi_\kappa}^{\xi} \left| \phi_\kappa''(\tau) \right| \,d\tau \geq  2\mu -  \sup_{\tau \geq \xi^\ast}|\phi_\kappa''(\tau)| \cdot \left(\xi-\xi^\ast \right),
\] 
where we have used \eqref{Au3} in the last inequality.
Recalling the definition of $J_\kappa$ and that $a_\kappa \to a_0$ as $\kappa \to 0$, we may take a small $\veps_0$ such that $J_\kappa \subset (-\xi_\kappa,\xi_\kappa)$ since $\phi_\kappa$ strictly decreases on $(0,\infty)$. 
By Lemma \ref{H_K conv},  $\|\phi_\kappa''\|_{L^\infty(\mathbb{R} \setminus J_\kappa)}=\|e^{\phi_\kappa}-H_\kappa^{-1}(\phi_\kappa)\|_{L^\infty(\mathbb{R} \setminus J_\kappa)}$ is uniformly bounded for $\kappa \in (0, \tilde{\kappa}_0)$.
Thus, there exist $\mu = \mu(\tilde{\kappa}_0), \delta = \delta(\tilde{\kappa}_0)>0$ independent of $\kappa \in (0, \tilde{\kappa}_0)$ such that $-\phi_\kappa'(\xi) \ge \mu $ for $\xi \in [\xi_\kappa,\xi_\kappa+\delta]$.
Then since $\phi_\kappa$ is monotone decreasing,
\[
\mu\le \frac{\phi_\kappa(\xi_\kappa) - \phi_\kappa(\xi_\kappa + \delta)}{\delta} \le 
\frac{\phi_\kappa(\xi_\kappa) - \phi_\kappa(\xi)}{\delta}
\]
for $\xi \ge \xi_\kappa + \delta$, that is, 
\begin{equation}\label{0907b}
\phi_\kappa(\xi) \le \phi_\kappa(\xi_\kappa)- \mu  \delta \quad \text{ for } \xi \ge \xi_\kappa +\delta.
\end{equation}
Applying \eqref{step3eq4} of Lemma \ref{lem3.5}-(2) with $\phi_\kappa'' = e^{\phi_\kappa} - H_\kappa^{-1}(\phi_\kappa)$ and $\phi_\kappa(\xi_\kappa) = a_\kappa$, it follows from \eqref{0907b} that there are constants $\kappa_{\mu\delta}<\tilde{\kappa}_0$ and $c=c(\mu\delta)>0$ such that if $\kappa \in (0, \kappa_{\mu\delta})$, then
\begin{equation}
	\label{eq240731_2}
\phi_\kappa''(\xi) \ge c \phi_\kappa(\xi)
\end{equation}
 for $\xi \ge \xi_\kappa+\delta$.
Now we set $\kappa_0=\kappa_{\mu\delta}$ abusing notation.

By multiplying $\phi_\kappa'(\xi) (\le 0)$ on both sides of \eqref{eq240731_2} and then integrating from $\xi (\ge \xi_\kappa + \delta)$ to $\infty$ (recall that $\phi_\kappa(\infty) = \phi_\kappa'(\infty) = \phi_\kappa''(\infty) = 0$), we deduce
\[
\sqrt{c} \le \frac{-\phi_\kappa'(\xi)}{\phi_\kappa(\xi)} = - \frac{d}{d\xi} \log \left( \phi_\kappa \left( \xi \right) \right) \quad \text{ for } \,\, \xi \ge \xi_\kappa +\delta,
\]
and this implies
\[
\phi_\kappa(\xi) \le C\exp(-\sqrt{c}(\xi-\xi_\kappa)) \quad \text{ for } \,\, \xi \ge \xi_\kappa +\delta.
\]
Thus we deduce the  uniform exponential decay of $\phi_\kappa$ on $\mathbb{R}\setminus J_\kappa$.
 
On the other hand, from the definition \eqref{Def_H}, one can find positive constants $\eta$ and $\tilde{\eta}$, independent of $\kappa \in (0, \kappa_0)$, such that
\[
\eta(\rho -1) < H_\kappa(\rho) - H_\kappa(1) = H_\kappa(\rho) \quad \text{ for } \,\,\rho \in [1, 1+\tilde{\eta}].
\]
This and the exponential decay of $\phi_\kappa$ imply that there are positive constants $R_0, C$ and $N$ (independent of small $\kappa$) satisfying the following:
\[
\rho_\kappa - 1 = H_{\kappa}^{-1}(\phi_{\kappa}) - 1 \le \eta^{-1}H_{\kappa}\left( H_{\kappa}^{-1} \left( \phi_\kappa \right) \right) = \eta^{-1}\phi_{\kappa} \le C\exp (-N|\xi|)
\]
for $|\xi| > R_0 $. This with the uniform boundedness of $\rho_\kappa$ on $\mathbb{R} \setminus J_\kappa$ (see Lemma \ref{H_K conv}), we obtain the desired result.
The proposition is proved.
\end{proof}

Now we prove Theorem \ref{thm3}.
\begin{proof}[Proof of Theorem \ref{thm3}]
($1$) We first prove the convergence \eqref{Eq_Thm3} for $0<\alpha<1/3$. By Proposition \ref{prop3.4}, it holds that 
\begin{subequations}\label{eq13_0809}
\begin{align} 
& | \phi_\kappa'' (\xi) | = |\psi_\kappa'' (\xi)| \simeq \frac{c_\kappa}{\sqrt{\kappa}}  \quad   \text{for} \quad \xi \in I_\kappa \subset [-C\kappa^{3/4}, C\kappa^{3/4}], \label{13_0809_1} \\
&  | \phi_\kappa'' (\xi)| = |\psi_\kappa''(\xi) |   \simeq |\xi^{-2/3}|  \quad  \text{for} \quad \xi \in  J_\kappa \setminus I_\kappa \subset [-C, C].\label{13_0809}
\end{align}
\end{subequations}
From \eqref{eq13_0809} and Proposition \ref{prop3.6} with the relation $\phi_\kappa'' = e^{\phi_\kappa} - \rho_\kappa$ ($\rho_\kappa \to 1$ as $|\xi| \to \infty$), it follows that 
for $p \in (1,3/2)$,
\[
 \| \phi_\kappa'' \|_{L^p (\mathbb{R})}^p \lesssim  \frac{c_\kappa^p \kappa^{3/4} }{\kappa^{p/2}} +  \int_{J_\kappa \setminus I_\kappa} \xi^{-2p/3} d\xi + 1 \lesssim  \int_{-C}^{C} \xi^{-2p/3} d\xi + 1 \lesssim 1,
 \]
 where the second inequality is due to \eqref{eq240716_01} and $c_\kappa \to c_0$ as $\kappa \to 0$.
Then by the Sobolev embedding, for any $\beta \in (0,1/3)$, $\|\phi_\kappa\|_{C^{1,\beta}(\mathbb{R})}$ is uniformly bounded.

We fix $\alpha \in (0, 1/3)$. By Arzela-Ascoli's theorem with a diagonal process, for any compact $K \subset \mathbb{R}$ there exists $\tilde{\phi}_0 \in C_{ loc }^{1,\alpha}(\mathbb{R})$ such that $\phi_\kappa \to \tilde{\phi}_0$ in $C^{1,\alpha}(K)$, up to subsequences.
With the fact that $\phi_\kappa$ decay exponentially (uniformly in $\kappa$) on $\mathbb{R} \setminus [C, C] \subset \mathbb{R} \setminus J_\kappa$, we obtain $\tilde{\phi}_0 \in C^{1, \alpha}(\mathbb{R})$ such that $\phi_\kappa \to \tilde{\phi}_0$ in $C^{1,\alpha}(\mathbb{R})$, up to subsequences. 

Moreover, by the relation $ \phi_\kappa'' = e^{\phi_\kappa} - H_\kappa^{-1}(\phi_\kappa) $
with Lemma \ref{H_K conv}, we also have      $\phi_\kappa'' \to \tilde{\phi}''_0$ in $\mathbb{R} \setminus (-\delta, \delta)$ for any $\delta > 0$.
This means that $\tilde{\phi}_0$ satisfies the following ODE: 
\[
\tilde{\phi}_0''=e^{\tilde{\phi}_0}-H_0^{-1}(\tilde{\phi}_0) \quad  \text{for } \xi \in \mathbb{R} \setminus \{0\}.
\]
Since 
\[
\tilde{\phi}_0(0) = \lim_{\kappa \to 0} \phi_\kappa(0) = \frac{c_0^2}{2},
\]
$\tilde{\phi}_0(\xi) \to 0$ as $|\xi| \to \infty$, and $\tilde{\phi}_0$ satisfies the above ODE, it must be $\tilde{\phi}_0 \equiv \phi_0$.
Therefore, $\phi_\kappa \to \phi_0$ in $C^{1,\alpha}(\mathbb{R})$ for $\alpha \in (0,1/3)$.
The first assertion is proved.

\vspace{15pt}
 
($2$)
Next, we prove \eqref{Eq_Thm3_2} for $1 \le p < 3/2$.
We proceed in a similar way to the proof of \eqref{Eq_Thm3}.  
In particular, for the convergence near the origin, due to  \eqref{lem3.3eq1} of Lemma \ref{lem3.3}, Proposition \ref{prop3.4} and Proposition \ref{prop3.6}, there exists $C>0$ such that
for any small $\delta>0$,
\[
\begin{split}
\|\rho_\kappa-1\|_{L^p((-\delta,\delta))} 
&  \le \|\rho_\kappa-1\|_{L^p(I_\kappa)} + \|\rho_\kappa-1\|_{L^p( (-\delta,\delta)\setminus I_\kappa)}   \\
& \le \|\rho_\kappa\|_{L^p(I_\kappa)} + \|\rho_\kappa\|_{L^p((-\delta,\delta)\setminus I_\kappa)}  + C\delta \\
& \lesssim c_\kappa \kappa^{-\frac{1}{2} + \frac{3}{4p}}  + \| \xi^{-2/3}\|_{L^p((-\delta,\delta)\setminus I_\kappa)} + C\delta\\
&  \lesssim \kappa^{-\frac{1}{2} + \frac{3}{4p}} + \delta^{\frac{1}{p}}
\end{split}
\]
for $\kappa \in [0, \kappa_0)$.
 Here, we have used $\rho_\kappa =H^{-1}_\kappa(\phi_\kappa)$ and $ \phi_\kappa'' = e^{\phi_\kappa} - H_\kappa^{-1}(\phi_\kappa) $.

On the other hand, by Lemma \ref{H_K conv}, $\rho_{\kappa} = H_{\kappa}^{-1}(\phi_{\kappa})$ uniformly converges to $\rho_0 = H_{0}^{-1}(\phi_{0})$ in $\mathbb{R} \setminus (-\delta, \delta)$ for any $\delta > 0$, as $\kappa \to 0$.
This with the uniform (in $\kappa$) boundedness of $\| \rho_\kappa - 1\|_{L^p((-\delta,\delta))}$ implies \eqref{Eq_Thm3_2}.

\vspace{15pt}

($3$) Finally, we prove \eqref{Eq_Thm3_3}. Fix $\beta \in (0, 2/3)$.
We prove that there is a constant $\kappa_0 > 0$ such that for all  $\kappa \in [0,\kappa_0]$, 
\begin{equation}\label{eq_0810}
\|1/\rho_\kappa \|_{C^{\beta}(\mathbb{R})} \le N,
\end{equation}
where  $N$ is a constant independent of $\kappa < \kappa_0$. 
If \eqref{eq_0810} holds true, \eqref{Eq_Thm3_3} follows as in the proof of \eqref{Eq_Thm3} by the relation $v_\kappa = c_\kappa(1 - 1/\rho_\kappa)$.

For simplicity, we set $ \nu_\kappa(\xi) := \rho_\kappa^{-1}(\xi) $. Then, $\sqrt{\kappa}/c_\kappa \le \nu_\kappa(\xi) < 1$ on $\xi \in \mathbb{R}$ with $\nu_\kappa(0) = \sqrt{\kappa}/c_\kappa \to 0$ as $\kappa \to 0$. Moreover, from \eqref{Def_H}, we have
\[
\phi_\kappa = \frac{c_\kappa^2}{2}\left(1 - \nu_\kappa^2 \right) + \kappa \log \nu_\kappa = \hat{H}_\kappa(\nu_\kappa),
\]
where $\hat{H}_\kappa(\tau) := c_\kappa^2(1 - \tau^2)/2 + \kappa \log \tau$ for $\tau \ge 0$. By direct calculation, we see that for $\xi \in \mathbb{R}$,
\begin{equation}
	\label{eq240805_01}
-c_\kappa^2 + \kappa \le \frac{d\hat{H}_\kappa }{d\tau}\left( \nu_\kappa \left( \xi \right) \right) = -c_\kappa^2\nu_\kappa \left( \xi \right) + \frac{\kappa}{ \nu_\kappa \left( \xi \right) } \le 0
\end{equation}
and
\begin{equation}
	\label{eq240805_02}
-2c_\kappa^2 \le  \frac{d^2\hat{H}_\kappa }{d\tau^2} \left( \nu_\kappa \left( \xi \right) \right) = -c_\kappa^2 - \frac{\kappa}{\nu_\kappa^2 \left( \xi \right) } \le -c_\kappa^2 - \kappa.
\end{equation}
In particular, $\hat{H}_\kappa(\tau)$ strictly decreases on $\tau\in[\sqrt{\kappa}/c_\kappa,1)$. 

For $\tau_1, \tau_2 \in [\sqrt{\kappa}/c_\kappa, 1)$ with $\tau_2 > \tau_1$, we observe that by \eqref{eq240805_01} and \eqref{eq240805_02},
\[
\begin{split}
\hat{H}_\kappa(\tau_2) - \hat{H}_\kappa(\tau_1) 
& = \frac{d\hat{H}_\kappa }{dt}(\tau_1)(\tau_2-\tau_1) + \frac{1}{2}\frac{d^2\hat{H}_\kappa }{dt^2} (\tilde{\tau}) \left( \tau_2 - \tau_1\right)^2 \\
& < \frac{1}{2}\frac{d^2\hat{H}_\kappa }{dt^2} (\tilde{\tau}) \left( \tau_2 - \tau_1\right)^2 < 0
\end{split}
\]
for some $ \tilde{\tau} \in [\tau_1 ,\tau_2]$. This with \eqref{eq240805_02} implies that there is a constant $N_1$,  independent of $\kappa$, such that
\begin{equation}\label{eq_0810_1}
\frac{ \left| \hat{H}_\kappa^{-1} \left( s_2 \right) - \hat{H}_\kappa^{-1} \left(s_1 \right) \right|}{|s_2-s_1|^{1/2}} \le N_1
\end{equation}
for $s_2 = \hat{H}_\kappa(\tau_2), s_1 = \hat{H}_\kappa(\tau_1) \in (0, \hat{H}_\kappa(\sqrt{\kappa}/c_\kappa)]$.

Now, we obtain that for $\xi_1,\xi_2 \in \mathbb{R}$,
\[
\begin{split}
& \frac{ \hat{H}_\kappa^{-1} \left( \phi_\kappa \left( \xi_1\right) \right) -  \hat{H}_\kappa^{-1} \left( \phi_\kappa \left( \xi_2 \right) \right)  }{|\xi_1-\xi_2|^{\beta}} \\
 = &\left( \frac{  \left|  \phi_\kappa\left( \xi_1 \right) - \phi_\kappa\left( \xi_2\right) \right|   }{|\xi_1-\xi_2|^{2\beta}} \right)^{1/2} \frac{ \hat{H}_\kappa^{-1} \left( \phi_\kappa \left( \xi_1\right) \right) -  \hat{H}_\kappa^{-1} \left( \phi_\kappa \left( \xi_2\right) \right)  }{     \left|  \phi_\kappa\left( \xi_1 \right) - \phi_\kappa\left( \xi_2\right) \right|^{1/2}      } \\
 \leq & \, C \frac{ \hat{H}_\kappa^{-1} \left( \phi_\kappa \left( \xi_1\right) \right) -  \hat{H}_\kappa^{-1} \left( \phi_\kappa \left( \xi_2\right) \right)  }{     \left|  \phi_\kappa\left( \xi_1 \right) - \phi_\kappa\left( \xi_2\right) \right|^{1/2}      } 
  \le  CN_1,
\end{split}
\]
provided  $\beta \in (0, 2/3)$,
where  the first inequality is due to the first assersion (1), i.e., the uniform boundedness of $\| \phi_\kappa \|_{C^{1, \alpha}(\mathbb{R})}$ in $\kappa$ for $\alpha \in (0, 1/3)$ and the second inequality is from \eqref{eq_0810_1}. This proves \eqref{eq_0810}, and therefore, the theorem is proved. 
\end{proof}

\section{Numerical solutions to the Euler-Poisson system}\label{S4}
In this section, we present some numerical solutions to the pressureless Euler-Poisson system, i.e., \eqref{EP2} with $\kappa=0$. 
We consider smooth and exponentially decaying initial data, and we numerically investigate  whether there are finite-time $C^1$ blow-up solutions to \eqref{EP2} with $\kappa=0$ whose asymptotic blow-up profiles resemble the peaked solitary waves of \eqref{EP2} with $\kappa=0$.

We follow \cite{Satt} (see also \cite{HNS}) and employ the implicit pseudo-spectral scheme with $\Delta x = L/2^9$ on periodic spatial domains $[-L,L]$ and the Crank–Nicolson method with $\Delta t = 0.001$ is applied for time marching. 

Let us first briefly explain one possible type of singularity (\textit{shock-like}) in the pressureless Euler-Poisson system. Sufficient conditions for initial data that lead to the $C^1$ blow-up are studied in \cite{BCK,CKKT,Liu} (see also the Appendix in \cite{BKK2}).
In particular, numerical solutions presented in \cite{BCK} show that the velocity function $v$ exhibits \textit{shock-like} singularity (see Figure \ref{FigNume6}, for instance).
To study a more detailed blow-up profile, in \cite{BKK2}, the blow-up solution for the pressureless Euler-Poisson system is constructed from the initial data satisfying
\[
\partial_x v|_{t=0} \ll -1, \quad \partial_x^2 v|_{t=0} = 0, \quad \partial_x^3 v|_{t=0} \gg 1
\]
at some point $x=x_0$.
In this construction, suitable self-similar and modulation variables are introduced, considering the pressureless Euler-Poisson system as a perturbation of the Burgers equation, so that the inflection point of $v$ becomes the blow-up location.
When it blows up at $t=T_\ast$, then $v$ exhibits $C^{1/3}$ regularity, and for $\rho$, it holds that 
\begin{equation}\label{eq0903}
\left| \rho \left( T_\ast, x \right) -1 \right| \sim \frac{1}{ \left( x-x_* \right)^{2/3}}
\end{equation}
near the blow-up location $x_\ast$. We observe that the asymptotic behavior \eqref{eq0903} of $\rho$  is identical to that of the peaked solitary wave in \eqref{Thm2_1}.

In order to further illustrate, in Figure \ref{FigNume6}, we present a numerical solution to the pressureless Euler-Poisson system \eqref{EP2} with the initial data $(\rho,u)|_{t=0}=(1,3e^{-x^2})$.
Wave steepening of $v$ is observed: one front of $v$ becomes steeper, and the gradient of $v$ tends to $-\infty$ in finite time $T_\ast \sim 4$.
To the left (resp. the right) of the  singularity,  $v$ is concave down (resp. up). This corresponds to the formation of a shock-like (Burgers-type) singularity.

\begin{figure}[t]
\begin{center}
\includegraphics[width=0.85\linewidth]{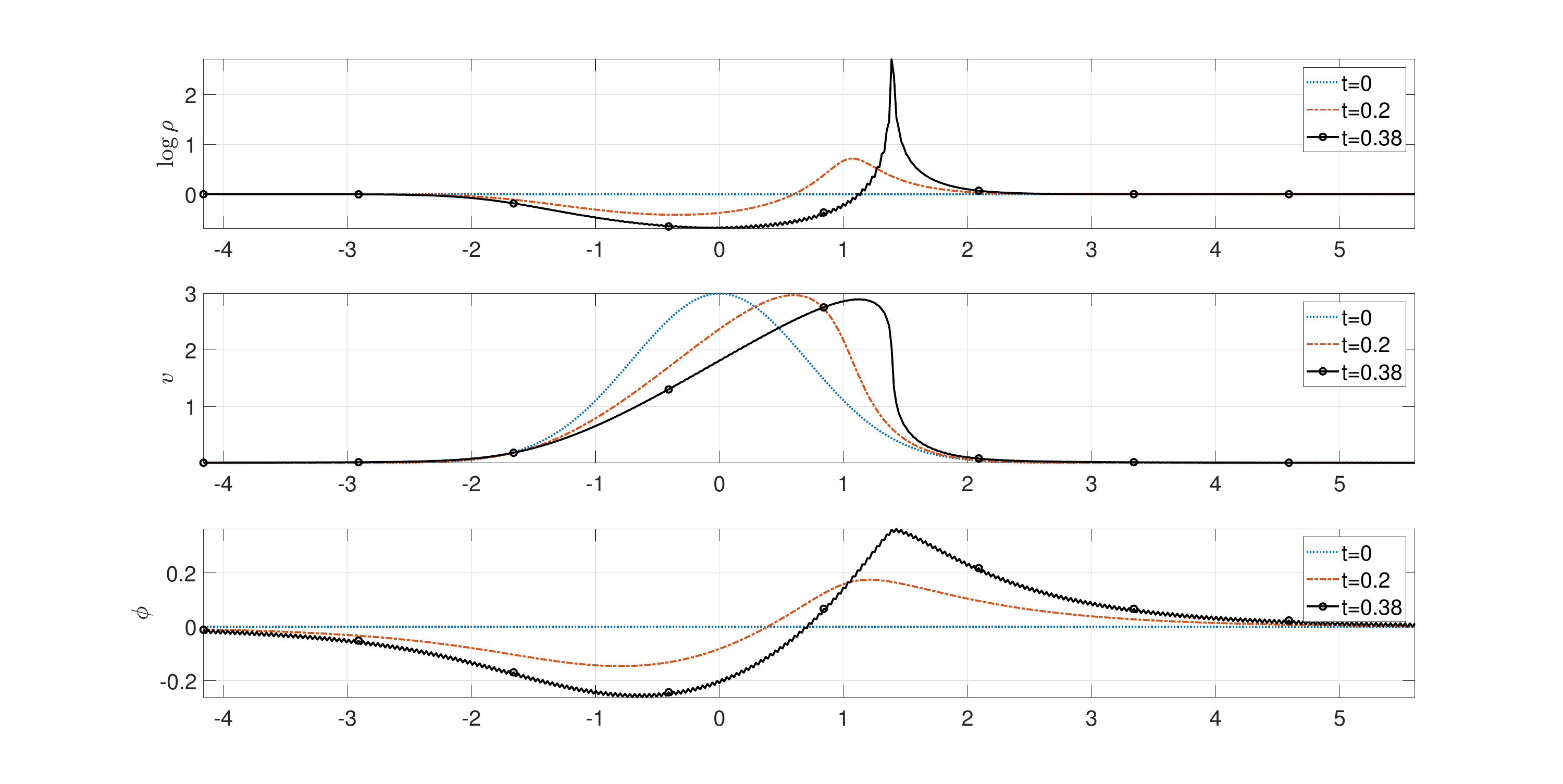}
\end{center}
\caption{Numerical solution of the pressureless Euler-Poisson system (\eqref{EP2} with $\kappa=0$) for the initial data $\rho|_{t=0}=1$ and $v|_{t=0}=  3e^{-x^2}$ on the interval $[-10,10]$.  }
\label{FigNume6}
\end{figure}

\begin{figure}[h]
\begin{center}
\includegraphics[width=0.85\linewidth]{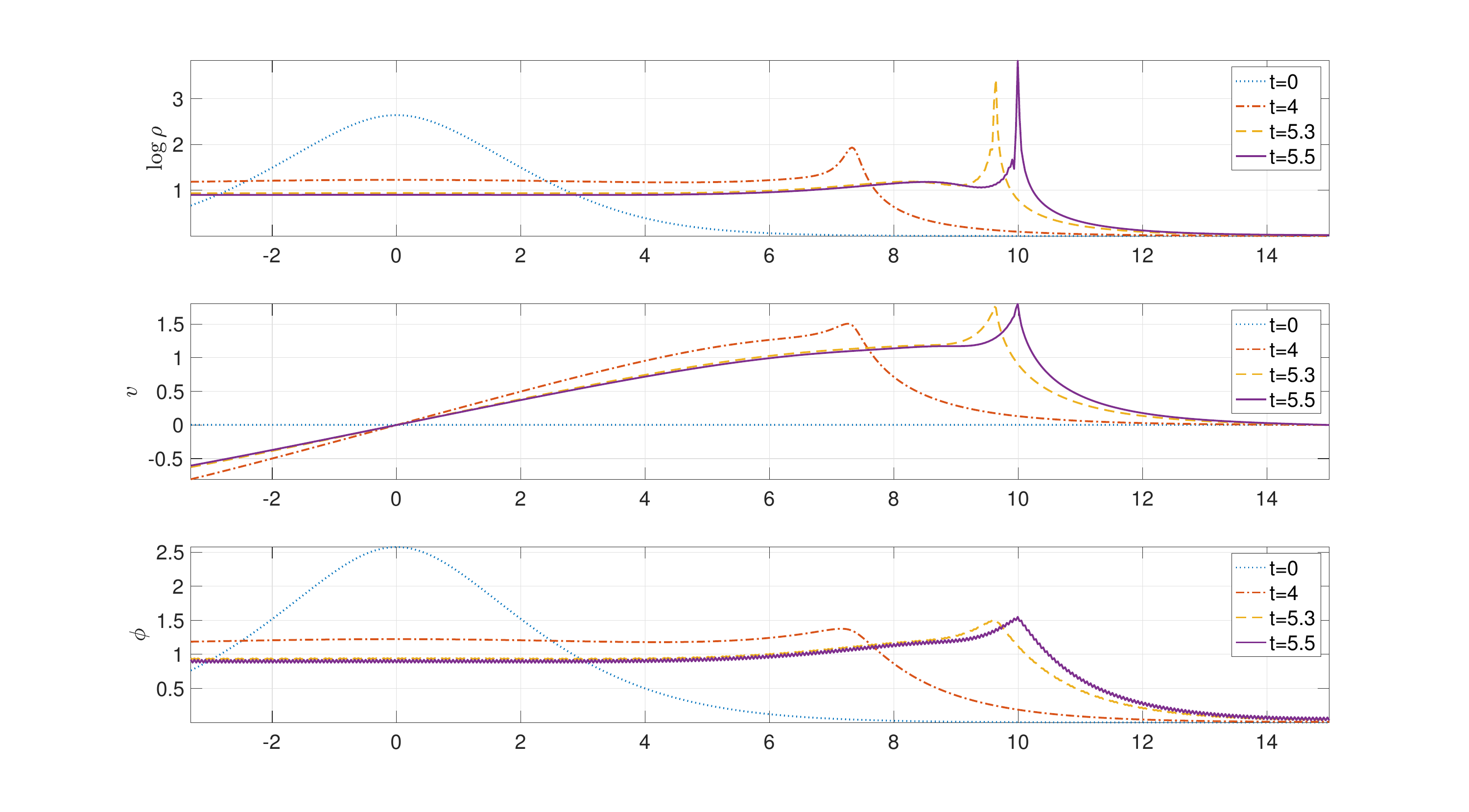}
\end{center}
\caption{Numerical solution of the pressureless Euler-Poisson system (\eqref{EP2} with $\kappa=0$) for the initial data $\rho|_{t=0}=1 + 13 \,\text{sech}(x)$ and $v|_{t=0}=0$ on the interval $[-15,15]$. }
\label{FigNume3}
\end{figure}

On the other hand, in Figure \ref{FigNume3}, where the initial data $\rho|_{t=0}=1 + 13 \,\text{sech}(x)$ and $v|_{t=0}=0$ are considered, we observe a very different behavior of the solution.
At the blow-up location $x_\ast \sim  10$,  $v$ resembles the shape of the peaked solitary wave profiles for $v$, rather than that of the blow-up profile of the Burgers equation. Specifically, to the left (resp. the right) of the blow-up location $x_\ast \sim  10$, the sign of $\partial_x v$ is positive (resp. negative), and $v$ is concave up near $x_\ast \sim  10$.
Compare Figure \ref{FigNume6} and Figure \ref{FigNume3} .

It is important to note that the initial data for the solution in Figure \ref{FigNume3} do not fit into the class of initial conditions leading to the $C^1$ blow-up as studied in \cite{BCK, BKK2, CKKT,Liu}, in which the initial data must satisfy that either $|\partial_x v|$ is sufficiently large or $\rho$ is sufficiently close to $0$ at some point $x$ (see Figure 1 in \cite{CKKT}).
Thus,  this represents a novel blow-up scenario.

In fact, the proofs of blow-up in these works rely on the uniform (in $x$ and $t$) boundedness of $\phi$ obtained from the conserved energy, and they do not account for the dispersive nature in the dynamics of \eqref{EP2}.
We recall that the peaked solitary waves can exist due to the balance between the nonlinear transport and the dispersive effect.

Summarizing our discussion, we propose the following conjecture on the pressureless Euler-Poisson system, i.e., \eqref{EP2} with $\kappa=0$.
For the initial data $(\rho-1,v)|_{t=0} \in H^{s}(\mathbb{R})\times H^{s+1}(\mathbb{R})$, $s>3/2$, the classical solution to \eqref{EP2} with $\kappa=0$ exists locally in time \cite[Theorem 10.1]{LLS}.\\

\textbf{Conjecture:} \textit{For some $s>3/2$, there exists a class of initial data $(\rho-1,v)|_{t=0} \in H^{s}(\mathbb{R})\times H^{s+1}(\mathbb{R})$, which includes $(\tilde{\rho},0)$ where $\tilde{\rho}(x)>1$ for all $x\in \mathbb{R}$, and $\tilde{\rho}(x)$ is sufficiently large at some point $x$, such that the following hold:
\begin{enumerate}
\item the maximal existence time $T_\ast$ of the $C^1$ solution  $(\rho,v)$ is finite;
\item at the blow-up time $T_\ast$, $v$ is concave up to the left (and right) of the blow-up location $x_\ast$;  
\item at the blow-up time $T_\ast$, the asymptotic behaviors of $v$ and $\rho$ follow the forms $(x-x_\ast)^{2/3}$ and $(x-x_\ast)^{-2/3}$, respectively.
\end{enumerate}
} 
\bigskip

The asymptotic behaviors conjectured above are based on the behaviors of the peaked solitary waves in \eqref{Thm2_1} and numerical solutions shown in Figure \ref{FigNume3}.
As in \cite{KS,RLSS}, where a method from \cite{SSF} was adopted to track singularities arising in some dispersive equations such as the Whitham and the Camassa-Holm equations, more careful numerical study of the form of the singularities in the pressureless model will be needed. 

We finish this section with a remark.
For the isothermal Euler-Poisson system, i.e., \eqref{EP2} with $\kappa>0$, $C^1$ solutions are not expected to become Lipschitz prior to the derivative blow-up, due to the local existence theorem \cite[Theorem 10.4]{LLS} and its corollary \cite[Lemma 2.3]{BKK}.
More specifically, for $(\rho-1,v)|_{t=0}\in H^s(\mathbb{R})\times H^s(\mathbb{R})$ with $s > 3/2$, there is $T>0$ such that $(\rho-1,v)\in C([0,T];H^s(\mathbb{R})\times H^s(\mathbb{R}))$.
If $\lim_{t \to T}\|\partial_x(\rho,v)(t,\cdot)\|_{L^\infty(\mathbb{R})}<\infty$, then the solution can be continued beyond $T$. Hence, by the Sobolev embedding, the solution $(\rho-1,v)$ to the isothermal model cannot become Lipschitz (leaving $C^1$ class) in finite time for the initial data in $H^{s}(\mathbb{R})$, $s > 3/2$. We refer to \cite{BCK,BKK} for the finite-time $C^1$ blow-up of the isothermal model. To the best of our knowledge, the global existence of smooth solutions to \eqref{EP2} remains an open question.


  \section*{Acknowledgments}
J. Bae was supported by the National Research Foundation of Korea grant funded by the Korea government (MSIT) (NRF-2022R1C1C2005658). 
S.H. Moon was supported by the National Research Foundation of Korea grant funded by the Korea government (MSIT) (NRF-2022R1C1C2009892).
K. Woo has been supported by the National Research Foundation of Korea (NRF) grant funded by the Korean government (MSIT) (No.2022R1A4A1032094).
 
\subsection*{Data availability statement}
All data that support the findings of this study are included within the article (and any supplementary files).

\subsection*{Conflict of interest}
The authors declare that they have no conflict of interest.

\section{Appendix} 
\begin{lemma}
	\label{z_K est}
Consider the equations \eqref{Aux3 lem-1} and \eqref{Eq z0 Cold-1}. Then, the following   hold:
\begin{enumerate}[$(1)$]
\item For each $\kappa>0$, the equation \eqref{Aux3 lem-1} has exactly two positive solutions $z=\sqrt{\kappa}$ and $z=c_\kappa$. In particular, $c_\kappa>\sqrt{1+ \kappa}$.
\item  The equation \eqref{Eq z0 Cold-1} has a unique positive solution $z=c_0$, and it holds that $c_0>1$.
\item $c_\kappa \to c_0$ as $\kappa \to 0$.
\item For all sufficiently small $\kappa>0$, it holds that  $c_\kappa < c_0$ and $c_0- c_\kappa = O(\sqrt{\kappa})$.
\end{enumerate}
\end{lemma}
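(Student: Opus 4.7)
The plan is to study each equation through a single-variable function whose monotonicity can be completely determined. For part (1), dividing \eqref{Aux3 lem-1} by $\kappa^{\kappa/2}$ and taking logarithms recasts it as $F_\kappa(z)=0$, where
\[
F_\kappa(z) := \kappa \log z + \log\bigl[(z-\sqrt{\kappa})^2+1\bigr] - \tfrac{1}{2}(z^2-\kappa) - \tfrac{\kappa}{2}\log \kappa.
\]
A direct substitution gives $F_\kappa(\sqrt{\kappa})=0$, so $z=\sqrt{\kappa}$ is always a root. The key computation I would carry out is to clear $z F_\kappa'(z)$, substitute $w=z-\sqrt{\kappa}$, and simplify; this yields the clean factorization
\[
F_\kappa'(z) = \frac{(z-\sqrt{\kappa})^2\,(1+\kappa-z^2)}{z\bigl[(z-\sqrt{\kappa})^2+1\bigr]}.
\]
From this, $F_\kappa$ is nondecreasing on $(0,\sqrt{1+\kappa}\,]$ with a horizontal tangent only at $z=\sqrt{\kappa}$ and strictly decreasing on $[\sqrt{1+\kappa},\infty)$, while $F_\kappa(z)\to -\infty$ at both endpoints of $(0,\infty)$. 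Since $F_\kappa(\sqrt{\kappa})=0$ and the unique maximum is attained at $\sqrt{1+\kappa}$, $F_\kappa$ is strictly positive on $(\sqrt{\kappa},\sqrt{1+\kappa}\,]$, so there is exactly one further zero, necessarily in $(\sqrt{1+\kappa},\infty)$; this is $c_\kappa$, and $c_\kappa>\sqrt{1+\kappa}$ follows automatically.

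For part (2), I apply the same template to $G(z):=z^2+1-e^{z^2/2}$: the factorization $G'(z)=z(2-e^{z^2/2})$ shows $G$ increases then decreases on $(0,\infty)$ with $G(0)=0$ and $G(z)\to-\infty$, giving a unique positive zero $c_0$ after the maximum; evaluating $G(1)=2-\sqrt{e}>0$ then forces $c_0>1$.

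For parts (3) and (4), the approach is to expand $F_\kappa$ around $F_0(z):=\log(z^2+1)-z^2/2$, whose unique positive zero is $c_0$. The expansion
\[
\log\bigl[(z-\sqrt{\kappa})^2+1\bigr] = \log(z^2+1) - \frac{2z\sqrt{\kappa}}{z^2+1} + O(\kappa),
\]
valid uniformly in a neighborhood of $c_0$, combined with $\kappa\log z = O(\kappa)$ and $(\kappa/2)\log\kappa = O(\kappa|\log\kappa|)$, gives
\[
F_\kappa(z) = F_0(z) - \frac{2z\sqrt{\kappa}}{z^2+1} + O(\kappa|\log\kappa|).
\]
Since $F_0(c_0)=0$, this yields $F_\kappa(c_0) = -2c_0\sqrt{\kappa}/(c_0^2+1) + O(\kappa|\log\kappa|)<0$ for sufficiently small $\kappa>0$. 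Because $F_\kappa$ is strictly decreasing past $\sqrt{1+\kappa}$, because $c_0>\sqrt{1+\kappa}$ for small $\kappa$, and because $F_\kappa(c_\kappa)=0$, this forces $c_\kappa<c_0$. Noting that $F_0'(c_0) = c_0(1-c_0^2)/(c_0^2+1) \neq 0$, a mean-value estimate $-F_\kappa(c_0)=F_\kappa'(\tilde c)(c_\kappa-c_0)$ then yields
\[
c_0 - c_\kappa = \frac{2\sqrt{\kappa}}{c_0^2-1} + o(\sqrt{\kappa}),
\]
so both $c_\kappa\to c_0$ and $c_0-c_\kappa = O(\sqrt{\kappa})$ follow. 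The main obstacle is identifying the correct small parameter: it is $\sqrt{\kappa}$ rather than $\kappa$, since $\sqrt{\kappa}$ enters $(z-\sqrt{\kappa})^2$ at first order while the $\kappa\log\kappa$ contribution is genuinely subleading. A second subtlety in part (1) is that $z=\sqrt{\kappa}$ is a triple-zero inflection point of $F_\kappa$ rather than a local extremum (since $F_\kappa'(z)\sim C(z-\sqrt{\kappa})^2$ near it), which is precisely what allows $F_\kappa$ to pass through zero there without producing additional roots and thus keeps the second positive solution unique.
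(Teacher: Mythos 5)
Your proposal is correct and follows essentially the same route as the paper: your $F_\kappa$ is the paper's $f_\kappa$ verbatim (note $\kappa\log\sqrt{\kappa}=\tfrac{\kappa}{2}\log\kappa$), the derivative factorization $F_\kappa'(z)=\frac{(z-\sqrt{\kappa})^2(1+\kappa-z^2)}{z[(z-\sqrt{\kappa})^2+1]}$ is exactly the paper's, and the evaluation $F_\kappa(c_0)=-\tfrac{2c_0\sqrt{\kappa}}{c_0^2+1}(1+o(1))<0$ followed by a mean-value/fundamental-theorem estimate is the paper's proof of part (4), with your version even extracting the sharper constant $\tfrac{2}{c_0^2-1}$. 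The only cosmetic differences are that you work with $z^2+1-e^{z^2/2}$ rather than its logarithm in part (2), and that you fold part (3) into the expansion for part (4) (for full rigor one should first squeeze $c_\kappa\to c_0$ from $F_\kappa(c_0)<0<F_\kappa(c_0-\veps)$ and the monotonicity of $F_\kappa$ on $[\sqrt{1+\kappa},\infty)$ before placing the mean-value point $\tilde c$ near $c_0$, which your uniform expansion does supply).
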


\begin{proof}
(1) Taking logarithm on both sides of \eqref{Aux3 lem-1}, we define
$$
f_\kappa(z):=\kappa \log z - \kappa\log\sqrt{\kappa} + \log \left( (z-\sqrt{\kappa})^2 +1\right) -  (z^2 - \kappa)/2.
$$
Since
\[
\frac{df_\kappa}{dz} = \frac{-(z-\sqrt{\kappa})^2}{z\big((z-\sqrt{\kappa})^2 + 1 \big)}\left( z + \sqrt{1+\kappa} \right)\left( z - \sqrt{1+\kappa} \right),
\]
$f_\kappa(\sqrt{\kappa}) = 0$ and $\lim_{z \to +\infty}f_\kappa(z) = -\infty$, it is straightforward to see that $f_\kappa$ has exactly two positive roots $z=\sqrt{\kappa}$ and $z=c_\kappa$, and $c_\kappa$ satisfies $\sqrt{1+\kappa} < c_\kappa$.
This proves the first statement.

(2) In a similar fashion, one can show the second statement by introducing 
\[
f_0(z):=\log \left(z^2+1\right) -   z^2/2
\]
and showing that $f_0$ has a unique positive zero $c_0$.
We omit the details. 

(3) To show the third statement, we note that $f_\kappa \to f_0$ uniformly in $z$ on any compact interval in $(0,\infty)$.
Since $c_0$ is a unique positive zero of $f_0$,  we conclude that $c_\kappa \to c_0$ as $\kappa \to 0$. 

(4) Now we prove that $c_\kappa < c_0$ for all sufficiently small  $\kappa>0$.
It is straightforward to check that  $f_\kappa(z) >0$ for $z \in (\sqrt{\kappa},c_\kappa)$ and $f_\kappa(z)<0$ for $z \in (c_\kappa,\infty)$.
Since $\log{(1+x)} \simeq x$ near $0$, we have that for sufficiently small $\kappa>0$, 
\begin{equation}\label{f_ka1}
\begin{split}
f_\kappa(c_0)
& = \kappa\log c_0  -  \frac{\kappa}{2}\log \kappa + \log \left(c_0^2 -2\sqrt{\kappa}c_0+\kappa^2+1\right) - \frac12 \left( c_0^2 - \kappa \right)\\
& =  \kappa\log c_0 -  \frac{\kappa}{2}\log \kappa + \log (c_0^2 + 1)  -\frac{ 2c_0\sqrt{\kappa}}{c_0^2 + 1} (1+ o(1))   -\frac12 c_0^2 + \frac{\kappa}{2}  \\
 & =   -\frac{ 2c_0\sqrt{\kappa}}{c_0^2 + 1} (1+ o(1)) < 0,
\end{split}
\end{equation}
where we use the fact that $f_0(c_0)= \log \left(c_0^2+1\right) -   c_0^2/2 =0$.
Hence, it follows that  $c_\kappa < c_0$ for all sufficiently small  $\kappa>0$. 
Moreover, since $f_0'(c_0) < 0$ and $f_\kappa \to f_0$ in $C^1\left( K \right)$ for any compact set $K \subset \left( 0,\infty \right)$, there is $\delta>0$ such that $f_\kappa'(z) <-\delta$ for all $z$ sufficiently close to $c_0$.
Then, by the fundamental theorem of calculus,
\[
-f_\kappa(c_0)  = -f_\kappa(c_0) + f_\kappa(c_\kappa)  = \int_{c_\kappa}^{c_0} -f_\kappa'(z)\,dz  > \delta(c_0-c_\kappa),
\]
and from \eqref{f_ka1}, it follows that $c_0-c_\kappa= O(\sqrt{\kappa})$. We complete the proof.
\end{proof}

\bibliographystyle{amsplain}

 \end{document}